\documentclass[11pt]{article}
\usepackage{amsmath, amsfonts, amsthm, amscd, amssymb, commath}
\usepackage{indentfirst, bm}
\usepackage{geometry}
\usepackage{tikz}
\usepackage[colorlinks=true]{hyperref}
\usepackage{authblk,fancyhdr}

\usetikzlibrary{arrows, positioning, shapes, decorations.markings,decorations.pathmorphing, bending}
\tikzstyle{arrowstyle}=[scale=0.8]
\tikzstyle{directed}=[postaction={decorate,decoration={markings,    mark=at position 0.1 with {\arrow[arrowstyle]{stealth}}}}]
\tikzstyle{midway_mod}=[postaction={decorate,decoration={markings,    mark=at position 0.5 with {\arrow[arrowstyle]{stealth}}}}]

\setlength{\parskip}{0.5\baselineskip}

\newtheorem{theorem}{Theorem}[section]
\newtheorem{proposition}[theorem]{Proposition}
\newtheorem{lemma}[theorem]{Lemma}
\newtheorem{corollary}[theorem]{Corollary}

\theoremstyle{definition}
\newtheorem{definition}[theorem]{Definition}

\theoremstyle{remark}
\newtheorem*{remark}{Remark}
\numberwithin{equation}{section}

\title{Some Dynamical Properties on Manifolds with no Conjugate Points}

\author[1]{Fei Liu\thanks{liufei@math.pku.edu.cn}}
\affil[1]{\small College of Mathematics and Systems Science, Shandong University of Science and Technology, Qingdao, 266590, China}

\author[2]{Xiaokai Liu\thanks{liuxk@mail.sustech.edu.cn}}
\affil[2]{\small Department of Mathematics, Southern University of Science and Technology, Shenzhen, 518055, China}

\author[3]{Fang Wang\thanks{fangwang@cnu.edu.cn}}
\affil[3]{\small School of Mathematical Sciences, Capital Normal University, Beijing, 100048, China}

\begin{document}
\maketitle

\begin{abstract}
	In this article, we study the dynamics of geodesic flows on Riemannian (not necessarily compact) manifolds with no conjugate points. We prove the Anosov Closing Lemma, the local product structure, and the transitivity of the geodesic flows on $\Omega_1$ under the conditions of bounded asymptote and uniform visibility. As an application, we further discuss about some generic properties of the set of invariant probability measures.

	Keywords: Geodesic Flows with no Conjugate Point, Anosov Closing Lemma, Transitivity, Invariant Probability Measures

	2020 MSC: 37D40, 37D25
\end{abstract}

\section{\bf Introduction}

In this article, we always assume that the manifold $(M,g)$ is a (compact or non-compact) manifold with no conjugate points. We want to study some important dynamical properties including the Anosov Closing Lemma, the local product structure, and the topological transitivity of the geodesic flows on the non-wandering set. Under some mild conditions, we prove these properties hold, then show some applications. The concept of manifolds with no conjugate points is a natural extension of the negatively/non-positively curved manifolds, on which the dynamics of geodesic flows have already been extensively studied. The geodesic flows on compact negatively curved manifolds are uniformly hyperbolic flows (cf.~\cite{Ano}), which have very rich dynamics. We suggest~\cite{Kn2, Pa} as a comprehensive discussion on the dynamical properties of these geodesic flows. If we loosen the requirement on the curvature and allow the existence of zero curvature on the manifold, things get much more complicated. The geodesic flow is not uniformly hyperbolic and many tools no longer work. A landmark work on the dynamics of geodesic flows on rank $1$ manifolds with non-positive curvature is Knieper's work~\cite{Kn1}. In that article, Knieper developed a series of new tools and proved the existence and uniqueness of the measure of maximal entropy. From then on, mathematicians have done many ingenious works and presented a lot of deep results on the geodesic flows on non-positively curved manifolds.

It is a natural question to ask if we further loosen the restriction on the curvature to allow some regions admitting positive curvature, what conclusion we can get on the dynamics of the geodesic flows?

Manifolds with no focal points is a natural generalization of this kind. Easy to see that manifolds of negative/non-positive curvature contains no focal points. Although the manifold with no focal points admits the existence of positive curvature, properties are not far away from those on non-positively curved manifolds. In recent years, many results on the dynamics of geodesic flows on non-positively curved manifolds were proved to be still valid on manifolds without focal points (cf.~\cite{CKP2, LLW, LWW, LZ}). However, if further generalize to the manifolds with no conjugate points, things are much different. Features of geodesic flows on such manifolds are much harder to grasp, thus extra conditions are usually needed. Very recent remarkable results on this topic can be found in~\cite{CKW2, CKW}.

In this article we study the geodesic flows on manifolds with no conjugate points. Our goal is to prove the following important dynamical properties: the Anosov Closing Lemma, the local product structure and the topological transitivity. These properties act as fundamental tools in further studying dynamics of geodesic flows.

The study of above properties on rank $1$ manifolds can be traced back to Ballmann~\cite{B}. Related works such as on rank 1 manifolds with no focal points (cf.~\cite{CKP1, GR, LWW, LZ}), on compact manifolds that admit a Riemannian metric of negative curvature (cf.~\cite{CKW}), and on non-compact rank 1 manifolds with non-positive curvature (cf.~\cite{CS1, CS2}), were gradually proved in recent years.

In our work, we study these dynamical properties on manifolds with no conjugate points. Comparing to the case of no focal points, lack of several key geometrical properties (thus (non-uniform) hyperbolicity) is the substantial difficulty. To overcome these obstacles, we achieve several geometrical properties which build up tools in discussing dynamics under the setting of bounded asymptote and uniform visibility, and successfully prove the Anosov Closing Lemma, local product structure and topological transitivity.

As a direct application, we can exhibit some generic results on the invariant probability measures as an extension of the classical work by Coud{\`e}ne-Schapira in~\cite{CS1, CS2} on non-positively curved manifolds. We believe further results of dynamics based on our work await for exploring.

This article is organized in the following way: in Section~\ref{sec2}, we introduce conceptions and notations we use. In Section~\ref{sec3}, we consider the geometry of manifolds with no conjugate points and prove some important results which will play a key role in our work. In Section~\ref{sec4} and~\ref{sec5}, we prove the local product structure, the Anosov Closing Lemma and the topological transitivity. At last in Section~\ref{sec6}, we present some generic properties of the set of invariant probability measures on $\Omega_1/\Omega_{\text{NF}}$ based on the results in previous sections. 

\section{\bf Notations and Preliminary}\label{sec2}
Let $(M,g)$ denote a connected complete $n$-dimensional Riemannian manifold with no conjugate points, and $(\tilde{M},\tilde{g})$ denote its universal cover.

Given a unit vector $v\in T^1M$ with its base point $\pi(v)=p\in M$. Let $\gamma_v$ denote the unit speed geodesic determined by $v$.  Similarly let $\gamma_{x,y}$ denote the connecting geodesic of two points $x,y\in M$ with parametrization $\gamma_{x,y}(0)=x$. $\phi_t(v)=\gamma'_v(t)$ denote the parallel transport of $v$ along $\gamma_v$.

Denote $d(\cdot,\cdot)$ the distance induced by the Riemannian metric. Define $d_1$ the Knieper Metric on $T^1\tilde{M}$.

\begin{definition}[Knieper Metric]
	\begin{displaymath}
		d_1(v,w)=\max_{0\leq t\leq 1} d(\gamma_v(t),\gamma_w(t)),\quad\forall v,w\in T^1\tilde{M}.
	\end{displaymath}
\end{definition}

Given two different geodesics $\gamma_1, \gamma_2$ on $\tilde{M}$, we call them positively asymptotic if there exist a positive number $C$ such that
\begin{displaymath}
	d(\gamma_1(t),\gamma_2(t))\leq C,\quad\forall t\geq 0
\end{displaymath}

Similarly, we say they are negatively asymptotic if the above inequality holds for all $t\leq 0$, and bi-asymptotic if they are both positively and negatively asymptotic.

Easy to check that positively/negatively asymptote builds an equivalent relation on geodesic on $\partial \tilde{M}$. We denote this positively/negatively asymptotic class by $\gamma(+\infty)$ and $\gamma(\text{--}\infty)$. Let $\partial\tilde{M}$ denote the ideal boundary of the universal cover, which is the collection of all equivalent classes on $\partial \tilde{M}$.

Define the angle between two geodesics by $\measuredangle_p(\gamma_{p,x},\gamma_{p,y})=\measuredangle(\gamma'_{p,x}(0),\gamma'_{p,y}(0))$. We define the cone as following:

\begin{itemize}
	\item $C(v,\epsilon)=\{a \in \tilde{M}\cup\partial{\tilde{M}}-\{p\}\mid \measuredangle_p(\gamma_v,\gamma_{p,a})< \epsilon\}$.
	\item $C_{\epsilon}(v) =C(v,\epsilon)\cap \partial{\tilde{M}}= \{\gamma_{w}(+\infty)\mid w \in  T^{1}\tilde{M}, \measuredangle(v,w)<\epsilon\}$.
	\item $TC(v,\epsilon,r)= \{q \in \tilde{M}\cup\partial{\tilde{M}}\mid \measuredangle_{p}(\gamma_{v}(+\infty),\gamma_{p,q})< \epsilon, d(p,q)>r\}$.
\end{itemize}

$TC(v,\epsilon,r)$ is called the \emph{truncated cone} with axis $v$ and angle $\epsilon$. Obviously $\gamma_{v}(+\infty) \in TC(v,\epsilon,r)$. There is a unique topology $\tau$ on $\tilde{M}\cup\partial{\tilde{M}}$ such that for each $\xi \in \partial{\tilde{M}}$ the set of truncated cones containing $\xi$ forms a local basis for $\tau$ at $\xi$. This topology is usually called the \emph{cone topology}. Under this topology, $\tilde{M}\cup\partial{\tilde{M}}$ is homeomorphic to the closed unit ball in $\mathbb{R}^{\text{dim}(M)}$, and the ideal boundary is homeomorphic to the unit sphere $\mathbb{S}^{\text{dim}(\tilde{M})-1}$. For more details about the cone topology, see~\cite{Eb1}.

Next, we define the stable (unstable) manifolds of a geodesic flow both on $T^1 M$ and $T^1\tilde{M}$.

\begin{definition}[Stable Manifolds]
	Give $v\in T^1M$, define the stable manifolds and local stable manifolds by
	\begin{displaymath}
		\begin{aligned}
		W^{ss}(v)&=\{w\in T^1M\mid \lim_{t\to\infty} d(\gamma_v(t),\gamma_w(t))=0\},\\
		W^{ss}_{\epsilon}(v)&=\{w\in W^{ss}(v)\mid d(\gamma_v(t),\gamma_w(t))\leq\epsilon, \forall t\geq0 \}.\\
		\end{aligned}
	\end{displaymath}

	By replacing $\gamma_v(t)$ with $\gamma_v(-t)$, we can define the unstable manifolds $W^{su}(v)$ and $W^{su}_{\epsilon}(v)$.

	Similarly define the stable (unstable) manifolds on the universal cover $\tilde{M}$ by lifting the unit vector $v$ to $\tilde{v}\in T^1\tilde{M}$. We will use the same notation $W^{ss}(\tilde{v})$ and $W^{su}(\tilde{v})$ for convenient.
\end{definition}

\begin{definition}[Busemann Function]
	Given $v\in T^1\tilde{M}$, we define the Busemann function
	\begin{displaymath}
		\begin{aligned}
			f_v:\tilde{M}&\to\mathbb{R},\\
			p&\mapsto f_v(p)=\lim_{t\to\infty}(d(p,\gamma_v(t))-t).
		\end{aligned}
	\end{displaymath}
\end{definition}

\begin{definition}[Stable Horocycle]
	For $\tilde{v}\in T^1\tilde{M}$, define the stable horocycle by
	\begin{displaymath}
		W^{SS}(\tilde{v})=\{\tilde{w}\in T^1_q\tilde{M}\mid f_{v}(q)=0, \tilde{w}=-\nabla f_{\tilde{v}}(q)\}.
	\end{displaymath}

	And the unstable horocycle $W^{SU}$ in a similar way.
\end{definition}

\begin{remark}
	Here the stable/unstable horocycle actually consists the unit inner/outer normal vector on the horocycle. It is defined on the unit tangent bundle. We use $W^{SS}/W^{SU}$ to denote the stable/unstable horocycle while $W^{ss}/W^{su}$ to denote the stable/unstable manifolds. In general, they might not be the same.
\end{remark}

Let $J^\bot(v)$ denote the $2(n-1)$-dimensional vector space of normal Jacobi fields along the geodesic $\gamma_v$. For each $w\in T_p M$ perpendicular to $v$ and fixed time parameter $t\neq 0$, denote by $J_{w,t}$ the unique Jacobi field satisfied that

\begin{displaymath}
	J_{w,t}(0)=w,\qquad J_{w,t}(t)=0
\end{displaymath}

Such Jacobi field exits and $J_{w,t}\in J^{\bot}(v), \forall t\neq 0$. In~\cite{G} Green proved that the two limiting vector fields
\begin{displaymath}
	\lim_{t\to\infty}J_{w,t}, \lim_{t\to\text{--}\infty}J_{w,t}.
\end{displaymath}
always exist and both are Jacobi fields along the geodesic $\gamma_v$.

\begin{definition}
	$\forall v\in T^1_p M$, denote the stable and unstable Jacobi fields by
	\begin{displaymath}
		\begin{aligned}
			J^s(v)&=\{J\in J^{\bot}(v)\mid\exists w\in T_{p}M, w\bot v,\lim_{t\to\infty}J_{w,t}=J\},\\
			J^u(v)&=\{ J\in J^{\bot}(v)\mid\exists w\in T_{p}M, w\bot v, \lim_{t\to\text{--}\infty} J_{w,t}=J\}.
		\end{aligned}
	\end{displaymath}
\end{definition}

\begin{definition}
	Let $J^c(v)=J^s(v)\cap J^u(v)$. We call $J\in J^c(v)$ the central Jacobi field along $\gamma_v$.
\end{definition}

\begin{definition}[Bounded Asymptote]
	Given a manifold with no conjugate points. We call this manifold satisfies \emph{bounded asymptote}, if there exist a uniform positive constant $C$ for all geodesics $\gamma$ and any stable Jacobi field $J$ along $\gamma$
	\begin{displaymath}
		\norm{J(t)}\leq C\norm{J(0)}, \quad\forall t\geq 0.
	\end{displaymath}
\end{definition}

The concept \emph{bounded asymptote} was first introduced by Eschenburg in~\cite{Es}. Such manifolds are still of great interest and contain rich types of manifolds. It is known that manifolds of non-positive curvature and manifolds without focal points both admit the bounded asymptotic condition. Moreover, we can calculate the constant $C=1$ for these cases, because the distance between positively asymptotic geodesics is non-increasing. In~\cite{LW} Liu-Wang proved that the geodesic flow on manifold of bounded asymptote is entropy expansive.

Next, we want to define the rank on the manifolds with no conjugate point. The definition of rank in the setting of non-positively curved manifolds was introduced by Ballman-Brin-Eberlein in~\cite{BBE}. Due to the flat strip theorem, the definition can be easily migrated to the manifolds with no focal points. Rank of the manifolds became an important concept in both geometry and dynamical systems. Dynamics on rank $1$ manifolds with non-positive curvature or with no focal points share similar properties with the ones on manifolds with negative curvature. Refer to~\cite{BCFJ, CT, CKP1, CKP2, GR, Wu} for recent works.

But in the case of manifolds with no conjugate points, there is no uniform definition of rank. We note that Knieper-Peyerimhoff in~\cite{KP} and Rifford-Ruggiero in~\cite{RR} both give definition of the rank separately, but their definitions are different.

Luckily under the bounded asymptote condition, Eschenburg in~\cite{Es} gives a sufficient condition for the existence of non-trivial central Jacobi field.

\begin{theorem}[Eschenburg]\label{thm_1}
	Let $M$ be a manifold with no conjugate points which satisfies bounded asymptote. Let $w,v\in T^1M$ and $w\neq\phi_t(v)$ for all $t$. If the geodesic $\gamma_w$ is bi-asymptotic to $\gamma_v$, then there exist central Jacobi field $J\in J^c(v)$ along $\gamma_v$.
\end{theorem}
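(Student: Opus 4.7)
My plan is to produce a nontrivial normal Jacobi field $J$ along $\gamma_v$ that is bounded on all of $\mathbb{R}$, and then invoke bounded asymptote to place $J$ in $J^c(v) = J^s(v) \cap J^u(v)$. The bi-asymptoticity of $\gamma_v$ and $\gamma_w$ supplies boundary data for a family of approximating Jacobi fields, while bounded asymptote keeps them uniformly bounded and enables a limiting procedure.

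Lifting to the universal cover $\tilde{M}$ (where $\exp$ is a diffeomorphism by the no-conjugate-points hypothesis), for each $T>0$ let $Y_T^{\pm} \in T_{\gamma_v(\pm T)}\tilde{M}$ be the component of $\exp_{\gamma_v(\pm T)}^{-1}(\gamma_w(\pm T))$ perpendicular to $\gamma'_v(\pm T)$. By bi-asymptoticity, $|Y_T^{\pm}| \leq d(\gamma_v(\pm T), \gamma_w(\pm T)) \leq C_0$ for a constant $C_0$ independent of $T$. Let $J_T \in J^{\bot}(v)$ be the unique normal Jacobi field along $\gamma_v$ with $J_T(-T) = Y_T^-$ and $J_T(T) = Y_T^+$; existence and uniqueness follow from the absence of conjugate pairs on $[-T, T]$. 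Split $J_T = A_T + B_T$ with $A_T(T) = 0$, $A_T(-T) = Y_T^-$ and $B_T(-T) = 0$, $B_T(T) = Y_T^+$. Viewed from $\gamma_v(\mp T)$, each summand is precisely a Green-type field of the form $J_{w, \pm 2T}$ based at $\gamma_v(\mp T)$; since the stable and unstable limits of such families are bounded by $C|w|$ under bounded asymptote, we obtain a uniform bound $|J_T(t)| \leq 2CC_0$ on $[-T, T]$. Applying Arzel\`{a}--Ascoli to the Jacobi ODE and passing to a subsequence $T_n \to \infty$, I obtain $J_{T_n} \to J$ in $C^1$ on compacta, where $J$ is a normal Jacobi field along $\gamma_v$ satisfying $|J(t)| \leq 2CC_0$ for all $t \in \mathbb{R}$.

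It remains to verify (i) $J \not\equiv 0$ and (ii) $J \in J^c(v)$. For (ii), I would use the decomposition $J^{\bot}(v) = J^s(v) + J^u(v)$ together with the fact that under bounded asymptote an unstable Jacobi field cannot remain bounded in forward time unless it is also stable: writing $J = J_s + J_u$, forward boundedness of $J$ and $J_s$ forces $J_u$ to be forward-bounded, hence central, giving $J \in J^s(v)$; the time-reversed argument yields $J \in J^u(v)$, so $J \in J^c(v)$. The main obstacle is (i). A priori the $J_{T_n}$ could collapse to zero in the limit; this is where the hypothesis $w \neq \phi_t(v)$ for every $t$ enters essentially, as it guarantees that $\gamma_w$ is not a mere reparametrization of $\gamma_v$, and hence the perpendicular displacement $\bigl(\exp_{\gamma_v(t)}^{-1}(\gamma_w(t))\bigr)^{\bot}$ is nonzero on an open set of times. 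A careful comparison between $J_T$ and this displacement vector field on a fixed compact interval, combined with the uniform upper bound already established, should yield a lower bound $|J_{T_n}(t_0)| \geq c > 0$ at a well-chosen $t_0$, transferring to the limit and producing the required nontrivial central Jacobi field.
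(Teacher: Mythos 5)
The paper does not actually prove this statement: it is quoted from Eschenburg \cite{Es} and used as a black box, so there is no internal proof to compare against. Your overall strategy --- interpolate the perpendicular displacement between the two bi-asymptotic geodesics by Jacobi fields $J_T$ on $[-T,T]$, extract a limit bounded on all of $\mathbb{R}$, and argue that a globally bounded Jacobi field is central --- is the standard and correct shape of the argument. However, three of your steps contain genuine gaps.

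(i) The uniform bound on $J_T$ rests on a non sequitur: the fact that the \emph{limits} of the families $J_{w,t}$ satisfy $\|J(t)\|\le C\|J(0)\|$ does not bound the finite-time fields $A_T,B_T$ themselves uniformly on $[-T,T]$; on a general manifold with no conjugate points such fields can be large in the interior of the interval, and the uniform bound on the approximants under bounded asymptote is a separate quantitative lemma that must be proved or cited. (ii) The decomposition $J^{\bot}(v)=J^{s}(v)+J^{u}(v)$ holds if and only if $J^{c}(v)=0$, since $\dim(J^{s}(v)+J^{u}(v))=2(n-1)-\dim J^{c}(v)$; that is, it holds precisely when the conclusion of the theorem fails, so the argument must be run by contradiction. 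Even then, the assertion that a forward-bounded Jacobi field is stable is exactly the key lemma to be proved; it requires the linear divergence of Jacobi fields vanishing at a point (the infinitesimal version of Lemma~\ref{lem_3_0}(2)), which you never invoke. (iii) Most seriously, nontriviality of the limit $J$ is not established, and the proposed fix cannot work as stated: a Jacobi field along $\gamma_v$ approximates the actual displacement to $\gamma_w$ only to first order, whereas here the displacement has size of order $C_0$ rather than being infinitesimal, so there is no a priori relation between $J_{T}(t_0)$ and the displacement at $t_0$. Moreover the boundary data $Y_T^{\pm}$ could a priori tend to $0$, making $J\equiv 0$; ruling this out needs a positive \emph{lower} bound on the separation of distinct bi-asymptotic geodesics (obtainable from Lemma~\ref{lem_3_0}(1) applied with shifted time origin) together with an honest comparison tying that separation to $\|J_T\|$. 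Since $0\in J^{c}(v)$ always, nontriviality is the entire content of the theorem, and at present that step is a plan rather than a proof.
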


Based on Eschenburg's result, Ruggiero-Meneses in~\cite{RM} gives the following definition of the rank, which is similar to the definition on non-positively curved manifold.

\begin{definition}
	Let $M$ be a manifold of bounded asymptote, $v\in T^1M$. Define the rank of $v$ by
	\begin{displaymath}
		\textrm{rank}(v)=\dim J^c(v)+1
	\end{displaymath}

	And define the rank of the manifold by
	\begin{displaymath}
		\textrm{rank}(M)=\min_{v} \textrm{rank}(v)
	\end{displaymath}
\end{definition}

We will use this form of rank through the article. For more information, refer to~\cite{RM}.

Inspired by the famous flat strip theorem on manifolds of non-positive curvature or of no focal points, we introduce the following definition.

\begin{definition}
	Given a geodesic $\gamma$, we call $\gamma$ bounds a bounded strip if there exists another geodesic $\beta\neq\gamma$ bi-asymptotic to $\gamma$.
\end{definition}

From this definition, we can see that if $\gamma_v$ bounds a bounded strip, $\textrm{rank}(v)>1$. And if $\textrm{rank}(v)=1$, it does not bound one. But when $\gamma_v$ dos not bound a bounded strip, there is no guarantee that $\textrm{rank}(v)=1$.

Denote by $\Omega\subseteq T^1M$ the set of non-wandering vectors. For any $v\in\Omega$ and a neighborhood $V\ni v$, there is a sequence of increasing number $t_n\to\infty$ such that $\phi_{t_n}(V)\cap V\neq\emptyset$.

Let $\Omega_{\text{NF}}\subseteq\Omega$ denote the set of non-wandering vectors which do not bound a bounded strip, and let $\Omega_1\subseteq\Omega$ be the set of rank $1$ non-wandering unit vector, whose lift has the property that its stable (unstable) manifold $W^{ss}(v)$~(or $W^{su}(v)$) coincide with the stable (unstable) horocycle $W^{SS}(v)$~(or $W^{SU}(v)$) in $T^1\tilde{M}$.

Then the following inclusion relation naturally holds by definition
\begin{displaymath}
	\Omega_1\subseteq\Omega_{\text{NF}}\subseteq\Omega.
\end{displaymath}

In addition, we define $\Omega^{\text{rec}}_1\subseteq\Omega$ be the set of all rank $1$ recurrent unit vectors. We will prove that in our setting of manifolds, $\Omega^{\text{rec}}_1\subseteq\Omega_1$ in Proposition~\ref{thm_3_3} in Section~\ref{sec3}.

Another geometric property playing an essential rule of dynamics to our results is (uniform) visibility given in~\cite{Eb1, Eb2}.

\begin{definition}[Visibility]
	Say $\tilde{M}$ satisfies the Visibility Axiom if for any $p\in\tilde{M}$, $\epsilon>0$, there exists $R=R(p,\epsilon)>0$, such that for any geodesic $\gamma:]a,b[\to\tilde{M}$ with $d(p,\gamma)\geq R$, we have $\measuredangle_p(\gamma(a),\gamma(b))\leq\epsilon$. Here we allow $a$ and $b$ to be infinity.

	Say $\tilde{M}$ satisfies the Uniform Visibility Axiom if the constant $R$ can be chosen independently of $p$. Say $M$ satisfies (Uniform) Visibility Axiom if its universal cover $\tilde{M}$ does.
\end{definition}

An intuitive way to understand visibility is to observe geodesics at a given point $p\in\tilde{M}$. The further the geodesic is, the narrower the angle of view is.

For manifolds satisfying Visibility Axiom, we have the following property:

\begin{theorem}[Eberlein-O' Neil~\cite{Eb2, Eb3}]\label{thm_2_1}
	Given $\tilde{M}$ a manifold satisfying Visibility Axiom and any two points $x\neq y\in\partial\tilde{M}$, there is a connecting geodesic $\gamma_{x,y}$.
\end{theorem}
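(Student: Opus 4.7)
The plan is to realize $\gamma_{x,y}$ as a limit of finite geodesic segments whose endpoints approximate $x$ and $y$ in the cone topology, using the Visibility Axiom to prevent the segments from escaping to infinity. Fix a basepoint $p\in\tilde{M}$ and choose sequences of interior points $x_n\to x$, $y_n\to y$. Because $\tilde{M}$ is a complete simply connected manifold with no conjugate points, the exponential map at each point is a diffeomorphism; this yields a unique geodesic segment $\gamma_n$ from $x_n$ to $y_n$, and unique rays $\sigma_x,\sigma_y$ from $p$ to $x,y$. Since $x\neq y$ and geodesics are uniquely determined by their initial vector, one has $\sigma_x'(0)\neq\sigma_y'(0)$, so $\alpha:=\measuredangle_p(x,y)>0$.

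The crucial step is showing that $\{d(p,\gamma_n)\}$ is bounded. Suppose not; then along a subsequence $d(p,\gamma_n)\to\infty$. Applying the Visibility Axiom with $\epsilon=\alpha/2$, the associated constant $R=R(p,\alpha/2)$ satisfies $d(p,\gamma_n)\geq R$ for all large $n$, so $\measuredangle_p(x_n,y_n)\leq\alpha/2$. On the other hand, the definition of the cone topology forces $\gamma_{p,x_n}'(0)\to\sigma_x'(0)$ and $\gamma_{p,y_n}'(0)\to\sigma_y'(0)$, hence $\measuredangle_p(x_n,y_n)\to\alpha$, a contradiction. Thus $d(p,\gamma_n)\leq R$ for all large $n$.

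Reparametrize $\gamma_n$ so that $\gamma_n(0)$ realizes $d(p,\gamma_n)$. Then $\gamma_n(0)\in\overline{B_R(p)}$ and the initial unit vectors $\gamma_n'(0)$ lie in a compact subset of $T^1\tilde{M}$, so along a further subsequence $\gamma_n'(0)\to v$. Continuous dependence of geodesics on initial conditions gives $\gamma_n\to\gamma_v$ uniformly on compact time intervals. It remains to verify that $\gamma_v(+\infty)=y$ and $\gamma_v(-\infty)=x$ in the cone topology. Given any truncated cone around $y$ at $p$, one shows that $y_n$ (and hence the forward tail of $\gamma_n$, which stays within bounded distance of $p$ until reaching $y_n$) eventually enters it, so no subsequential limit of the forward tail can escape to a different boundary point; the argument for $x$ is symmetric.

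The main obstacle I anticipate is precisely this final identification of the endpoints at infinity. Uniform convergence of $\gamma_n$ on compact time intervals only controls the segments near the footpoint $\gamma_v(0)$, whereas the cone topology is defined by truncated cones reaching arbitrarily far out, so one must ensure no ``mass'' of the segments is lost at infinity in the limiting process. The remedy is a second invocation of the Visibility Axiom, combined with the already established bound $d(p,\gamma_n)\leq R$: the uniform angular control provided by visibility prevents the forward tails of $\gamma_n$ from drifting away from the direction of $y$, forcing $\gamma_v(+\infty)=y$ and symmetrically $\gamma_v(-\infty)=x$.
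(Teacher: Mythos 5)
The paper offers no proof of this statement---it is quoted from Eberlein--O'Neill---so there is nothing internal to compare against. Your argument is the standard proof of this classical fact and is essentially correct: connect approximating interior points $x_n,y_n$ by segments $\gamma_n$, use visibility (contrapositively) to force $d(p,\gamma_n)\le R(p,\alpha/2)$ once $\measuredangle_p(x_n,y_n)$ is bounded below by $\alpha/2>0$, extract a limit geodesic by compactness, and identify its endpoints at infinity. Two caveats are worth recording. First, the existence of the rays $\sigma_x,\sigma_y$ from $p$ (needed even to define $\alpha$) does not follow from the exponential map being a diffeomorphism, which only gives unique geodesics between interior points; for general manifolds without conjugate points the existence of a ray from $p$ in a prescribed asymptote class is a separate fact (in this paper it is Lemma~\ref{lem_3_0}(3), under bounded asymptote), though it is implicitly presupposed by the cone topology and the Visibility Axiom as stated. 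Second, your endpoint identification is only sketched; the clean way to finish is to note that $d(p,\gamma_n(s))\ge |s|-R$, so for $t$ large the subsegment of $\gamma_n$ from $\gamma_n(t)$ to $y_n$ stays at distance $\ge t-R$ from $p$, whence visibility gives $\measuredangle_p(\gamma_n(t),y_n)\le\epsilon$ uniformly in $n$; passing to the limit yields $\measuredangle_p(\gamma_v(t),y)\le\epsilon$ with $d(p,\gamma_v(t))\to\infty$, so $\gamma_v(+\infty)=y$, and symmetrically for $x$. With those details supplied the proof is complete.
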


Such property acts as a strong tool in studying dynamics of geodesics. A natural question is raised that what kind of manifolds admit Visibility Axiom? Luckily, we know that a rich bunch of surfaces have this property.

\begin{theorem}[Eberlein~\cite{Eb1}]
	A closed surface with no conjugate points and genus no less than $2$ satisfies the Uniform Visibility Axiom.\label{thm_2_2}
\end{theorem}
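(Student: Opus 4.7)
My approach is proof by contradiction, with Gauss--Bonnet applied to geodesic triangles as the main tool. Suppose the Uniform Visibility Axiom fails on $\tilde M$. Then there exist $\epsilon_0>0$, points $p_n\in\tilde M$, and geodesic segments $\sigma_n:[a_n,b_n]\to\tilde M$ with $d(p_n,\sigma_n)\to\infty$ and yet $\measuredangle_{p_n}(\sigma_n(a_n),\sigma_n(b_n))\geq \epsilon_0$. Because $M$ is closed, by applying suitable deck transformations and extracting a subsequence, I may assume that $p_n\to p$ inside a fixed compact fundamental domain.

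Since $\tilde M$ is simply connected with no conjugate points, the exponential map $\exp_{p_n}$ is a diffeomorphism, so any two points of $\tilde M$ are joined by a unique geodesic. This lets me form the closed geodesic triangle $\Delta_n$ with vertices $p_n,\sigma_n(a_n),\sigma_n(b_n)$. Applying Gauss--Bonnet to this simply connected geodesic triangle yields
\begin{equation*}
    \int_{\Delta_n} K\,dA \;=\; \alpha_n+\beta_n+\gamma_n-\pi,
\end{equation*}
where $\alpha_n\geq \epsilon_0$ is the interior angle at $p_n$ and $\beta_n,\gamma_n\in(0,\pi)$ are the remaining angles. In particular, the right-hand side stays in $(-\pi,2\pi)$, i.e.\ is bounded uniformly in $n$.

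The contradiction will come from showing $\int_{\Delta_n} K\,dA\to -\infty$. The genus hypothesis, together with Gauss--Bonnet on $M$, gives $\int_M K\,dA = 2\pi(2-2g)<0$, so the average value $\bar K:=\int_M K\,dA/\operatorname{Area}(M)$ is a strictly negative constant. Because $\alpha_n\geq\epsilon_0$ and each of the two sides issuing from $p_n$ has length at least $d(p_n,\sigma_n)\to\infty$, the triangle $\Delta_n$ contains a geodesic sector of unbounded size. Using the exponential growth of $\pi_1(M)$ in genus $\geq 2$ (via an orbit comparison of the type of Milnor--Svarc) together with the cocompact deck action, a tiling of the interior of $\Delta_n$ by fundamental domains produces an estimate of the form
\begin{equation*}
    \int_{\Delta_n} K\,dA \;=\; \bar K\cdot \operatorname{Area}(\Delta_n)+O\bigl(\operatorname{length}(\partial\Delta_n)\bigr),
\end{equation*}
in which the area term dominates the boundary term, driving the left-hand side to $-\infty$ and contradicting the uniform bound above.

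The main obstacle is rigorously justifying this last estimate in the no-conjugate-points setting. Unlike in the non-positive curvature case, one does not enjoy direct volume comparison, and stable/unstable Jacobi fields are only guaranteed to be nonvanishing rather than to grow at a prescribed rate. To turn the heuristic ``$\Delta_n$ covers many copies of $M$'' into an honest inequality, I would use compactness of $M$ to produce uniform bounds on curvature and injectivity radius, and then quantify how $\operatorname{length}(\partial\Delta_n)$ controls the thickness of the boundary layer of $\Delta_n$ which fails to be tiled. The genus-$\geq 2$ hypothesis, via Hopf's theorem for surfaces with no conjugate points, supplies both the strict negativity of $\bar K$ and enough exponential growth of $\tilde M$ to guarantee $\operatorname{Area}(\Delta_n)/\operatorname{length}(\partial\Delta_n)\to\infty$; this topological--two-dimensional input is precisely what the statement of the theorem relies on.
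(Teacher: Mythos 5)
The paper itself contains no proof of this statement --- it is quoted directly from Eberlein --- so I can only assess your argument on its own terms. Your setup is sound: negating uniform visibility, moving $p_n$ into a fixed fundamental domain by deck transformations, using the no-conjugate-points hypothesis to get unique connecting geodesics (hence an embedded geodesic triangle $\Delta_n$), and applying Gauss--Bonnet to conclude $\int_{\Delta_n}K\,dA=\alpha_n+\beta_n+\gamma_n-\pi\in(-\pi,2\pi)$.

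The gap is in the final step, and it is fatal as written. You want $\int_{\Delta_n}K\,dA\to-\infty$ via the estimate $\int_{\Delta_n}K\,dA=\bar K\cdot\operatorname{Area}(\Delta_n)+O(\operatorname{length}(\partial\Delta_n))$ together with $\operatorname{Area}(\Delta_n)/\operatorname{length}(\partial\Delta_n)\to\infty$. The tiling identity really only says that $\int_{\Delta_n}K$ equals $2\pi\chi(M)$ times the number of whole fundamental domains inside $\Delta_n$, plus a boundary contribution controlled by the area of a fixed-radius neighborhood of $\partial\Delta_n$; for the first term to dominate you need exactly the isoperimetric inequality $\operatorname{Area}(\Delta_n)\gg\operatorname{length}(\partial\Delta_n)$. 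But for geodesic triangles this inequality points the wrong way in the model case you should test against: in $\mathbb{H}^2$ every geodesic triangle has area less than $\pi$, while your perimeters are at least $2\,d(p_n,\sigma_n)\to\infty$. (On a hyperbolic surface visibility of course holds, so such $\Delta_n$ never arise --- but that only shows your proposed mechanism is not what forces the contradiction. The contradiction in pinched negative curvature is the opposite one: Gauss--Bonnet bounds $\operatorname{Area}(\Delta_n)$ from above, while the sector of angle $\geq\epsilon_0$ and radius $d(p_n,\sigma_n)$ inside $\Delta_n$ forces the area to blow up.) In the no-conjugate-points setting you have neither tool: $K$ changes sign, so Gauss--Bonnet gives no upper area bound; and no conjugate points only guarantees that Jacobi fields do not vanish, not that they grow, so the sector of unbounded radius need not have unbounded area. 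Negativity of the average curvature $\bar K$ says nothing about $\int_R K$ for a thin region $R$ that does not equidistribute over $M$, and exponential growth of $\pi_1(M)$ (or volume growth of balls in $\tilde M$, via Milnor--\v{S}varc) does not imply that these particular triangles have large area. Closing this gap requires a genuinely different input --- in Eberlein's treatment, an analysis of divergence of geodesic rays and of the action of the non-abelian fundamental group --- rather than a refinement of the tiling estimate.
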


\section{\bf Geometric Properties of Rank 1 Manifolds of Bounded Asymptote}\label{sec3}

As a preparation to the following discussion on dynamics, we will prove some geometric results on $\tilde{M}$ in this part. The main results in this chapter were proven in the case that manifolds with no conjugate points with an extra condition bounded asymptote. Similar conclusion on non-positively curved manifolds were proven in~\cite{B}. For the case rank $1$ manifolds with no focal points, refer to~\cite{LWW, Wa}.

We start with some helpful conclusions proved by Ruggiero.

\begin{lemma}[Ruggiero]\label{lem_3_0}
	Let $M$ be a compact manifold with no conjugate points and bounded asymptote. The following results hold:

	1. (\cite{Ru1}, Lemma 1.1) There exists $A>0$ such that for any geodesics $\gamma_1$ and $\gamma_2$ in $\tilde{M}$ with $\gamma_1(+\infty)=\gamma_2(+\infty)$, we have that
	\begin{displaymath}
		d(\gamma_1(t),\gamma_2(t))\leq A\cdot d(\gamma_1(0),\gamma_2(0)),\quad\forall t\geq0.
	\end{displaymath}

	2. (\cite{Ru2}, Lemma 1.5) There exists $C>0$ such that for any $p\in\tilde{M}$ and $v,w\in T^{1}_p\tilde{M}$, we have that
	\begin{displaymath}
		d(\gamma_v(t),\gamma_w(t))\geq C\measuredangle_p(v,w)\cdot t,\quad\forall t\geq0.
	\end{displaymath}

	3. (\cite{RM}, P87) For any $p\in\tilde{M}$ and $\xi\in\partial\tilde{M}$, there exist a unique geodesic connecting $p$ and $\xi$.
\end{lemma}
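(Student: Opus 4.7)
The plan is to handle the three items in the order (3), (1), (2), since existence of the asymptotic geodesic in (3) is needed for the variation argument in (1), and (1) in turn gives uniqueness in (3). All three results rely on combining the bounded asymptote hypothesis with compactness and the variational theory of Jacobi fields.

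For item (3), existence comes from a compactness argument. Pick a sequence $q_n \in \tilde{M}$ with $q_n \to \xi$ in the cone topology, let $v_n \in T^1_p\tilde{M}$ be the initial vector of the geodesic segment from $p$ to $q_n$, and extract a convergent subsequence $v_n \to v$ using compactness of the unit sphere $T^1_p\tilde{M}$. A standard check with truncated cones shows $\gamma_v(+\infty) = \xi$. Uniqueness is deferred and drops out as a corollary of (1).

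For item (1), use a variation argument whose variation field is a stable Jacobi field. Connect $\gamma_1(0)$ and $\gamma_2(0)$ by an arc-length curve $c:[0,L]\to\tilde{M}$ of length $L = d(\gamma_1(0),\gamma_2(0))$. For each $s\in[0,L]$, by item (3) there is a unique geodesic $\gamma_s$ with $\gamma_s(0)=c(s)$ and $\gamma_s(+\infty)=\gamma_1(+\infty)$. The variation field $J_s(t)=\partial_s\gamma_s(t)$ is a stable Jacobi field along $\gamma_s$, because it is obtained as a variation by asymptotic geodesics (formally, $J_s(t)$ is the limit of Jacobi fields vanishing at $\gamma_s(T)$ for $T\to\infty$). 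The bounded asymptote condition then yields $\|J_s(t)\|\leq C\|J_s(0)\|=C\|c'(s)\|=C$ for all $t\geq 0$, and integrating over $s$ gives
\[
d(\gamma_1(t),\gamma_2(t))\leq \int_0^L \|J_s(t)\|\,ds \leq CL = C\cdot d(\gamma_1(0),\gamma_2(0)),
\]
so $A=C$ works. Applied with $\gamma_1(0)=\gamma_2(0)=p$, this also gives uniqueness in (3).

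For item (2), consider the variation $s\mapsto\gamma_{v_s}(t)$, where $v_s$ is a unit-speed parameterization of the shorter great-circle arc in $T^1_p\tilde{M}$ from $v$ to $w$. Each variation field $J_s$ is a perpendicular Jacobi field along $\gamma_{v_s}$ with $J_s(0)=0$ and $\|J_s'(0)\|=1$. Since $M$ has no conjugate points, $\|J_s(t)\|>0$ for $t>0$, and the continuous dependence of Jacobi fields on $(v,t)$ together with compactness of $T^1M$ and of the parameter $s$ yields a uniform $C>0$ such that $\|J(t)\|\geq Ct$ for every perpendicular Jacobi field $J$ along every unit-speed geodesic with $J(0)=0$ and $\|J'(0)\|=1$. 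Integrating $\|J_s(t)\|$ in $s$ and invoking the triangle inequality along the variation yields the claimed bound, with $\measuredangle_p(v,w)$ appearing as the total length of the arc traced by $v_s$.

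The main obstacle is the uniform lower bound in item (2). Without any curvature bound, Rauch comparison is unavailable, so one must extract $C>0$ purely from the compactness of $T^1M$ and the fact that the normalized map $(v,t)\mapsto \|J(t)\|/t$ is continuous and strictly positive on $T^1M\times(0,\infty)$. The delicate point is handling small and large $t$ uniformly: short-time behavior reduces to a Taylor expansion of $\|J(t)\|$ near $t=0$, while the long-time uniform lower bound requires projecting geodesics and their Jacobi fields back to the compact quotient $M$. Once these uniform estimates are in place, items (1) and (3) follow as outlined.
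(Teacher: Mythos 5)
This is a cited lemma: the paper offers no proof of its own, importing the three statements from Ruggiero \cite{Ru1, Ru2} and Ruggiero--Meneses \cite{RM}, so there is no internal argument to compare against. Judged on its own terms, your treatment of items (1) and (3) follows the standard route (asymptotic geodesics as limits of connecting segments, variation field realized as a limit of the Jacobi fields $J_{w,T}$ and hence stable, integration of the bounded-asymptote estimate, uniqueness in (3) from (1) with $\gamma_1(0)=\gamma_2(0)$) and is essentially sound, modulo routine care with the tangential component of the variation field and with passing the length estimate through the limit $T\to\infty$.

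Item (2), however, has a genuine gap, and it is located exactly at the hard core of Ruggiero's lemma. First, the integration step goes the wrong way: the length of the transversal curve $s\mapsto\gamma_{v_s}(t)$, namely $\int\norm{J_s(t)}\,ds$, is an \emph{upper} bound for $d(\gamma_v(t),\gamma_w(t))$ by the very definition of the Riemannian distance; a pointwise lower bound on $\norm{J_s(t)}$ therefore yields no lower bound on the distance between the endpoints of the variation. Second, the uniform estimate $\norm{J(t)}\geq Ct$ for radial Jacobi fields ($J(0)=0$, $\norm{J'(0)}=1$) cannot be extracted from compactness of $T^1M$: the domain is non-compact in $t$, and the renormalized quantity $\norm{J(t)}/t$ carries no cocycle or subadditivity structure that would let you localize it on the compact quotient. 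Indeed, for general manifolds with no conjugate points it is unknown (and in some settings false) that geodesic rays diverge, let alone linearly; positivity of $\norm{J(t)}$ for each fixed $t$ is compatible with bounded or sublinear growth. The linear divergence asserted in item (2) is precisely where the bounded asymptote hypothesis must enter in an essential, non-soft way --- e.g.\ via the dual estimate $\norm{J^u(t)}\geq C^{-1}\norm{J^u(0)}$ for unstable Jacobi fields and a comparison of radial with unstable solutions, or via quasi-convexity of $\tilde{M}$ --- and your sketch does not supply that mechanism. As written, item (2) is not proved, and since items (1) and (3) are logically independent of it, the fix must be a genuinely new argument for (2) rather than a repair of the variational computation.
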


\begin{theorem}[Continuity at Infinity]\label{thm_3_1}
	Let $\tilde{M}$ be a simply connected manifold with no conjugate points. Suppose $\tilde{M}$ satisfies bounded asymptote. For a sequence of unit vectors $v_n\to v\in T^1\tilde{M}$ and a sequence of increasing numbers $t_n\to+\infty$, we have

	\begin{displaymath}
		\lim_{n\to\infty} \gamma_{v_n}(t_n)=\gamma_{v}(+\infty).
	\end{displaymath}
\end{theorem}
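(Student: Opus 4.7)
The cone-topology conclusion $\gamma_{v_n}(t_n)\to\gamma_v(+\infty)$ is equivalent, with $p:=\pi(v)$ and $q_n:=\gamma_{v_n}(t_n)$, to the two assertions $d(p,q_n)\to\infty$ and $\measuredangle_p(\gamma_v(+\infty),\gamma_{p,q_n})\to 0$. Writing $p_n:=\pi(v_n)\to p$, the triangle inequality gives $|d(p,q_n)-t_n|\leq d(p,p_n)\to 0$, so $r_n:=d(p,q_n)\to\infty$ and the first condition is immediate. Since $\tilde{M}$ is simply connected with no conjugate points, $\exp_p$ is a diffeomorphism from $T_p\tilde{M}$ onto $\tilde{M}$, so the second condition is equivalent to $w_n\to v$ in $T^1_p\tilde{M}$, where $w_n$ denotes the initial unit direction of the unique geodesic from $p$ to $q_n$.

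\textbf{Intermediate step.} I would first show that $\eta_n\to v$, where $\eta_n\in T^1_p\tilde{M}$ is the initial direction of the unique geodesic $\alpha_n$ from $p$ to $\gamma_{v_n}(+\infty)$, whose existence and uniqueness is furnished by part 3 of Lemma~\ref{lem_3_0}. Part 1 of Lemma~\ref{lem_3_0} provides the time-uniform bound
\[
d(\alpha_n(t),\gamma_{v_n}(t))\leq A\cdot d(p,p_n)\qquad\text{for all } t\geq 0.
\]
Taking $t=1$ and combining with the standard joint continuity $\gamma_{v_n}(1)\to\gamma_v(1)$ yields $\exp_p(\eta_n)=\alpha_n(1)\to\gamma_v(1)=\exp_p(v)$, and inverting $\exp_p$ gives $\eta_n\to v$.

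\textbf{Main step.} Both $\gamma_{w_n}$ and $\alpha_n=\gamma_{\eta_n}$ start at $p$; by construction $\gamma_{w_n}(r_n)=q_n$, while
\[
d(\alpha_n(r_n),q_n)\leq d(\alpha_n(r_n),\alpha_n(t_n))+d(\alpha_n(t_n),\gamma_{v_n}(t_n))\leq (A+1)\cdot d(p,p_n)\to 0,
\]
using $|r_n-t_n|\leq d(p,p_n)$ together with the asymptote bound from the intermediate step. Invoking part 2 of Lemma~\ref{lem_3_0} for the pair $w_n,\eta_n\in T^1_p\tilde{M}$ then gives $C\cdot \measuredangle_p(w_n,\eta_n)\cdot r_n\leq d(\gamma_{w_n}(r_n),\alpha_n(r_n))\to 0$. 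Since $r_n\to\infty$, $\measuredangle_p(w_n,\eta_n)\to 0$; combined with $\eta_n\to v$, this forces $w_n\to v$ and completes the argument.

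\textbf{Expected obstacle.} The delicate point is not any single inequality but the simultaneous coordination of three geodesics ($\gamma_{v_n}$, $\alpha_n$, $\gamma_{w_n}$) at two divergent time scales ($t_n$ and $r_n$). The scheme succeeds precisely because bounded asymptote, through part 1 of Lemma~\ref{lem_3_0}, supplies a \emph{time-uniform} comparison between $\alpha_n$ and $\gamma_{v_n}$: the resulting bounded error $O(d(p,p_n))$ on the right side of the linear divergence estimate of part 2 can then be divided by the blowing-up $r_n$ to drive the angle down to zero. Without this uniform-in-time control, the argument would collapse at the transition from the fixed time slice $t=1$ (where only standard continuity is used) to the divergent boundary-limit times.
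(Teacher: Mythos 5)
Your proposal is correct and follows essentially the same route as the paper: the same three geodesics ($\gamma_{v_n}$, the ray from $p$ to $\gamma_{v_n}(+\infty)$, and the ray from $p$ to $\gamma_{v_n}(t_n)$), the same $(A+1)d(p,p_n)$ comparison from the bounded-asymptote estimate, and the same division of the linear divergence bound by $r_n\to\infty$ to force the angle to zero. The only cosmetic difference is that you establish $\eta_n\to v$ by evaluating at $t=1$ and inverting $\exp_p$, whereas the paper bounds $d_1(u_n,v_n)\leq A\,d(x,x_n)$ directly.
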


\begin{proof}
	Let $x_n=\pi(v_n)$, $x=\pi(v)$ and $y_n=\gamma_{v_n}(t_n)$. Let $s_n=d(x,y_n)$. Denote $w_n={\gamma'}_{x,y_n}(0)$ and $u_n={\gamma'}_{x,\gamma_{v_n}(+\infty)}(0)$. Let $\alpha_n=\measuredangle_x(u_n,w_n)$. Here the existence of $u_n$ is guaranteed by Lemma~\ref{lem_3_0}.

	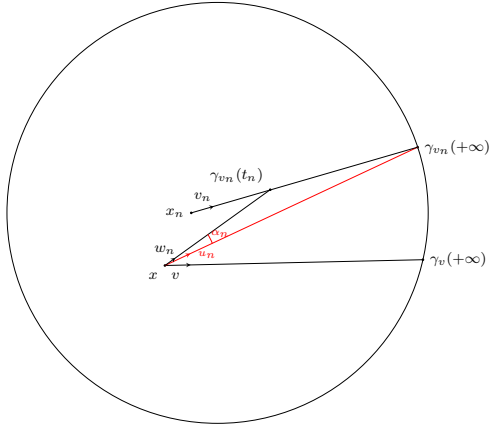
\begin{figure}[htbp]
		\centering
		\scalebox{0.7}{
			\begin{tikzpicture}
				\draw (0,0) circle (4);
				\coordinate [label=below left:$\scriptstyle x$] (X) at (-1,-1);
				\node at (X) [circle, fill, inner sep=0.5pt]{};
				\coordinate [label=right:$\scriptstyle \gamma_v(+\infty)$] (A) at (3.9,-0.89);
				\node at (A) [circle, fill, inner sep=0.5pt]{};
				\coordinate [label=right:$\scriptstyle \gamma_{v_n}(+\infty)$] (B) at (3.8,1.249);
				\node at (B) [circle, fill, inner sep=0.5pt]{};
				\draw[directed] (X)--(A);
				\draw[red, directed] (X)--(B);
				\coordinate [label=left:$\scriptstyle x_n$] (Y) at (-0.5,0);
				\node at (Y) [circle, fill, inner sep=0.5pt]{};
				\draw[directed] (Y)--(B);
				\coordinate [label=above left:$\scriptstyle \gamma_{v_n}(t_n)$] (Z) at (1,0.44);
				\node at (Z) [circle, fill, inner sep=0.5pt]{};
				\draw[directed] (X)--(Z);
				\node at (-0.28,0.3) {$\scriptstyle v_n$};
				\node at (-1,-0.7) {$\scriptstyle w_n$};
				\node at (-0.8,-1.2) {$\scriptstyle v$};
				\node at (-0.2,-0.8) [red] {$\scriptscriptstyle u_n$};
				\draw[red] (-0.1,-0.58) arc (20:34:0.8);
				\node[red] at (0.05,-0.4) {$\scriptscriptstyle \alpha_{n}$};
			\end{tikzpicture}
		}
		\caption{Continuity at Infinity}\label{fig1}
	\end{figure}

	Using the triangular inequality, we have $|s_n-t_n|\leq d(x,x_n)\leq d_1(v,v_n)\to0$.

	Lemma~\ref{lem_3_0} allows for a constant $A>0$ such that
	\begin{displaymath}
		d(\gamma_{u_n}(t_n),\gamma_{v_n}(t_n))\leq A\cdot d(x,x_n)\to 0.
	\end{displaymath}

	Therefore, we have
	\begin{equation}\label{eq_3_0_1}
		\begin{aligned}
			d(\gamma_{w_n}(s_n),\gamma_{u_n}(t_n))&=d(\gamma_{v_n}(t_n),\gamma_{u_n}(s_n))\\
												  &\leq d(\gamma_{v_n}(t_n),\gamma_{u_n}(t_n))+|t_n-s_n|\\
												  &\leq (A+1) d(x,x_n).
		\end{aligned}
	\end{equation}

	Again, Lemma~\ref{lem_3_0} guarantee a constant $C>0$ such that
	\begin{equation}\label{eq_3_0_2}
		C\cdot\alpha_n\cdot s_n\leq d(\gamma_{u_n}(s_n),\gamma_{w_n}(s_n)).
	\end{equation}

	Together with Equation~\eqref{eq_3_0_1} and~\eqref{eq_3_0_2}, we can conclude the angle is controlled by
	\begin{displaymath}
		\alpha_n\leq\frac{A+1}{Cs_n}d(x,x_n).
	\end{displaymath}

	As $s_n\to\infty$ but $d(x,x_n)\to0$, the angle $\alpha_n\to0$.

	On the other hand, we know that
	\begin{equation}
		d_1(u_n,v_n)=\max_{0\leq t\leq 1}d(\gamma_{u_n}(t),\gamma_{v_n}(t))\leq Ad(x,x_n)\to0.
	\end{equation}

	Thus $v_n\to v$ implies that $u_n\to v$. We have $\measuredangle_x(u_n,v)\to 0$. Therefore $\measuredangle_x(w_n,v)=\measuredangle_x(w_n,u_n)+\measuredangle_x(u_n,v)\to0$, which suggests $w_n\to v$. Choose $n$ sufficiently large, we have that $\gamma_{w_n}(s_n)$ is contained in the truncated cone:
	\begin{displaymath}
		\gamma_{w_n}(s_n)\in TC(v,\frac{s_n}{2},2\measuredangle_x(w_n,v)).
	\end{displaymath}

	Let $s_n\to\infty$, $\measuredangle_x(v,w_n)\to0$, we have that
	\begin{displaymath}
		\gamma_{v_n}(t_n)=\gamma_{w_n}(s_n)\to\gamma_v(+\infty).
	\end{displaymath}
\end{proof}

\begin{proposition}\label{thm_3_3}
	Let $\tilde{M}$ be a simply connected manifold with no conjugate points. Suppose $\tilde{M}$ satisfies bounded asymptote. Let $v\in T^1\tilde{M}$ be a rank $1$ recurrent vector, we have $W^{ss}(v)=W^{SS}(v)$ coincided. That is to say, for any $w\in W^{SS}(v)$
	\begin{displaymath}
		\lim_{t\to+\infty} d_1(\phi_t(v),\phi_t(w))=0.
	\end{displaymath}
\end{proposition}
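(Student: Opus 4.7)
The inclusion $W^{ss}(v) \subseteq W^{SS}(v)$ follows directly from the characterization of $W^{SS}$ via the Busemann function (an asymptotic geodesic to $\gamma_v$ is forced to be an integral curve of $-\nabla f_v$). My plan for the reverse inclusion, which is the content of the proposition, is to argue by contradiction using the rank-$1$ hypothesis together with Eschenburg's Theorem~\ref{thm_1}. Since $d_1(\phi_t v, \phi_t w) \to 0$ is equivalent to $d(\gamma_v(t),\gamma_w(t)) \to 0$ by uniform continuity of the flow, I will work directly with the latter.

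Assume that $w \in W^{SS}(v)$ but $d(\gamma_v(t),\gamma_w(t)) \not\to 0$, so there exist $\delta_0 > 0$ and $s_k \to +\infty$ with $d(\gamma_v(s_k),\gamma_w(s_k)) \geq \delta_0$. Since $\gamma_v$ and $\gamma_w$ are positively asymptotic, applying Lemma~\ref{lem_3_0}(1) to the shifted geodesics $\gamma_v(\cdot + s), \gamma_w(\cdot + s)$ at time $s_k - s$ gives $d(\gamma_v(s_k),\gamma_w(s_k)) \leq A\,d(\gamma_v(s),\gamma_w(s))$ for every $0 \leq s \leq s_k$. Letting $k \to \infty$, I obtain the uniform lower bound $d(\gamma_v(s),\gamma_w(s)) \geq \delta := \delta_0/A > 0$ for all $s \geq 0$.

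Next, I use the recurrence of $v$ (downstairs in $T^1M$): there exist $t_n \to +\infty$ and deck transformations $\psi_n$ of $\tilde M$ with $d\psi_n\,\phi_{t_n}(v) \to v$ in $T^1\tilde M$. Deck invariance of $d_1$ and the lower bound above give $d_1(d\psi_n\phi_{t_n}(v), d\psi_n\phi_{t_n}(w)) = d_1(\phi_{t_n}(v), \phi_{t_n}(w)) \geq \delta$, while Lemma~\ref{lem_3_0}(1) also provides the upper bound $d(\gamma_v(t_n),\gamma_w(t_n)) \leq A\,d(\pi v,\pi w)$, so the base points of $d\psi_n\phi_{t_n}(w)$ stay in a compact set. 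Passing to a subsequence, $d\psi_n\phi_{t_n}(w) \to w^*$ with $d_1(v,w^*) \geq \delta > 0$. For any fixed $t \in \mathbb{R}$, once $t_n + t \geq 0$ one has $d(\gamma_{d\psi_n\phi_{t_n}(v)}(t), \gamma_{d\psi_n\phi_{t_n}(w)}(t)) = d(\gamma_v(t_n+t),\gamma_w(t_n+t)) \leq A\,d(\pi v,\pi w)$, so in the limit $d(\gamma_v(t),\gamma_{w^*}(t)) \leq A\,d(\pi v,\pi w)$ for every $t \in \mathbb{R}$. Thus $\gamma_{w^*}$ is bi-asymptotic to $\gamma_v$.

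It remains to exclude $w^* = \phi_s(v)$ for some $s \in \mathbb{R}$. Using continuity of the Busemann functions in the base vector on manifolds of bounded asymptote (a consequence of Theorem~\ref{thm_3_1} together with Lemma~\ref{lem_3_0}), the relation $d\psi_n\phi_{t_n}(w) \in W^{SS}(d\psi_n\phi_{t_n}(v))$ passes to the limit to give $f_v(\pi w^*) = 0$; since $f_v(\gamma_v(s)) = -s$, the only possibility is $\pi w^* = \pi v$ and then $w^* = v$, contradicting $d_1(v,w^*) > 0$. Consequently $w^*$ is off the $\phi$-orbit of $v$, and Eschenburg's Theorem~\ref{thm_1} produces a non-trivial central Jacobi field along $\gamma_v$, contradicting $\textrm{rank}(v) = 1$. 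The step I expect to be the main obstacle is exactly this exclusion of $w^* \in \{\phi_s(v)\}_{s \in \mathbb{R}}$: in non-positive curvature or no focal points it is immediate from convexity of the Busemann function, but in our setting it relies on a careful continuity statement for Busemann functions/horocycles valid under the bounded-asymptote assumption alone.
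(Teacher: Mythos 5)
Your proof follows the same strategy as the paper: obtain a uniform positive lower bound on $d(\gamma_v(t),\gamma_w(t))$ for $t\ge 0$ from Ruggiero's quasi-convexity (Lemma~\ref{lem_3_0}(1)), use recurrence of $v$ to pull the orbit back via $\mathrm{d}\psi_n\phi_{t_n}$, extract a limit $w^*$ whose geodesic is bi-asymptotic to $\gamma_v$ but at uniformly positive $d_1$-distance, and invoke Eschenburg's Theorem~\ref{thm_1} to contradict $\mathrm{rank}(v)=1$. The arguments are essentially isomorphic up to notation.

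Where you genuinely add value is the step you flag yourself: ruling out $w^*=\phi_s(v)$ for $s\neq 0$. The paper's proof disposes of this in one sentence — ``Because the distance $d_1(v,\mathrm{d}g_n\phi_{t_n}(w))$ is bounded away from $0$, we know that $w'\neq\gamma'_v(t)$'' — but a positive $d_1$-distance only excludes $s=0$; it does not by itself exclude $w'$ lying elsewhere on the orbit, since $d_1(\phi_s v,\phi_{s+s_0}v)=|s_0|$ is a nonzero constant compatible with both the lower bound $a$ and the upper bound $A\,d_1(v,w)$. Your repair — propagating the horospherical relation $\mathrm{d}\psi_n\phi_{t_n}(w)\in W^{SS}(\mathrm{d}\psi_n\phi_{t_n}(v))$ to the limit to get $f_v(\pi w^*)=0$, then using $f_v(\gamma_v(s))=-s$ — is exactly the missing argument. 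You are right that it hinges on continuity of Busemann functions (equivalently of the stable horosphere foliation) in the base vector under bounded asymptote; this is not proved in the paper and should be isolated as a lemma (it can be deduced from Theorem~\ref{thm_3_1} together with Lemma~\ref{lem_3_0}(1), since the Busemann function is determined by the endpoint at infinity and a normalization at a footpoint, both of which vary continuously). Modulo that lemma, your proof is complete and slightly more rigorous than the paper's at the one delicate point.
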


\begin{proof}
	$v$ is recurrent, there exists a sequence of isometry $\{g_n{\}}_{n=1}^\infty\subseteq\Gamma$ and s sequence of increasing number ${\{t_{n}\}}_{n=1}^{\infty}$ with $t_{n}\to\infty$ such that
	\begin{displaymath}
		\mathrm{d}g_n\phi_{t_n}(v)\to v,\quad n\to+\infty.
	\end{displaymath}

	In~\cite{Es} Eschenburg showed that if $w\in W^{SS}(v)$, the corresponding geodesics $\gamma_w$ and $\gamma_v$ are asymptotic on manifolds with no conjugate points and of bounded asymptote.

	Therefore, we can choose a constant $D>0$ such that $d(\gamma_w(t),\gamma_v(t))\leq D,\quad t\geq 0$. Thus $d_1(\phi_t(w),\phi_t(v))\leq D,\quad t\geq 0$.

	We want to prove the limit converges to $0$ by contradiction.
	\begin{equation}\label{eq_3_1}
		\lim_{t\to+\infty}d_1(\phi_t(w),\phi_t(v))=0.
	\end{equation}

	Assume the~\eqref{eq_3_1} fails. In~\cite{Ru1} Ruggiero proved that $\exists A>0, \forall t>s\geq0$,
	\begin{displaymath}
		d(\gamma_v(t),\gamma_w(t))\leq A\cdot d(\gamma_v(s),\gamma_w(s)).
	\end{displaymath}

	Above inequality implies that for any sequence $t_n\to\infty$, $\lim_{n\to\infty} d(\gamma_w(t_n),\gamma_v(t_n))\neq 0$. Otherwise~\eqref{eq_3_1} will hold. When the limit is not $0$, the distance between geodesics is bounded away from 0, as well as the $d_1(\gamma_v(t),\gamma_w(t))$. We can find a constant $a>0$ such that
	\begin{equation}\label{eq_2}
		d_1(\phi_t(v),\phi_t(w))\geq a,\quad t\geq0.
	\end{equation}

	Thus
	\begin{equation}\label{eq_3_3}
		a\leq d_1(\phi_{t_n+s}(v),\phi_{t_n+s}(w))\leq A\cdot d_1(v,w),\quad\forall s\in [-t_n,\infty).
	\end{equation}

	Since isometry maps geodesics to geodesics, for any isometry $g_n$, we have that $\phi_t(\mathrm{d}g_{n}v)=g_n\phi_t(v)$. Therefore
	\begin{displaymath}
		\begin{aligned}
			d_1(\phi_{t_n+s}(v),\phi_{t_n+s}(w))&=d_1(\mathrm{d}g_n\phi_{t_n+s}(v),\mathrm{d}g_n\phi_{t_n+s}(w))\\
												&=d_1({\frac{\mathrm{d}}{\mathrm{d}t}|}_{t=0}g_n\circ \pi\phi_{t_n+s+t}(v),{\frac{\mathrm{d}}{\mathrm{d}t}|}_{t=0}g_n\circ \pi\phi_{t_n+s+t}(w),)\\
												&=d_1(\phi_s\circ{\frac{\mathrm{d}}{\mathrm{d}t}|}_{t=0}g_n\circ\pi\phi_{t_n+t}(v),\phi_s\circ{\frac{\mathrm{d}}{\mathrm{d}t}|}_{t=0} g_n\circ\pi\phi_{t_n+t}(w))\\
												&=d_1(\phi_s\circ\mathrm{d}g_n\phi_{t_n}(v),\phi_s\circ\mathrm{d}g_n\phi_{t_n}(w)).
		\end{aligned}
	\end{displaymath}

	By~\eqref{eq_3_3}, we get
	\begin{equation}\label{eq_3_4}
		a\leq d_1(\phi_s\circ\mathrm{d}g_n\phi_{t_n}(v),\phi_s\circ\mathrm{d}g_n\phi_{t_n}(w))\leq A\cdot d_1(v,w),\quad\forall s\geq-t_n.
	\end{equation}

	Because $v$ is recurrent, $\mathrm{d}g_n\phi_{t_n}(v)$ return to the neighborhood of $v$. For any $\epsilon>0$, there exists $N>0$, such that $d_1(\mathrm{d}g_n\phi_{t_n}(v),v)<\epsilon$, $\forall n>N$. Then~\eqref{eq_3_4} implies that
	\begin{displaymath}
		a-\epsilon\leq d_1(v,\mathrm{d}g_n\phi_{t_n}(w))\leq A\cdot d_1(v,w)+\epsilon.
	\end{displaymath}

	By passing to a sub-sequence if needed, we can assume that
	\begin{displaymath}
		\mathrm{d}g_n\phi_{t_n}(w)\to w'\in T^1\tilde{M}.
	\end{displaymath}

	Let $n\to+\infty, t_n\to+\infty$, we have
	\begin{displaymath}
		a\leq d_1(\phi_s v,\phi_s w')\leq A\cdot d_1(v,w),\quad s\in\mathbb{R}.
	\end{displaymath}

	Because the distance $d_1(v,\mathrm{d}g_n\phi_{t_n}(w))$ is bounded away from $0$, we know that $w'\neq\gamma'_v(t)$. Therefore $\gamma_{w'}\neq\gamma_v$. But these two geodesics are bi-asymptotic. By Theorem~\ref{thm_1}, $J^c(v)$ is non-trivial, $\textrm{rank}(v)\geq 2$, contradicts to the assumption that $v$ is rank $1$.
\end{proof}

Since recurrent vectors are non-wandering vectors, we know any rank $1$ recurrent vector $v\in T^1M$ must lies in $\Omega_1$, or $\Omega^{\text{rec}}_1\subseteq\Omega_1$.

\begin{theorem}\label{thm_3_2}
	Let $\tilde{M}$ be a simply connected manifold with no conjugate points and of bounded asymptote, $v\in T^1\tilde{M}$ is a rank $1$ unit vector. Then for any $\epsilon>0$, there exist $U_\epsilon, V_\epsilon$ neighborhoods of $\gamma_v(\text{--}\infty)$ and $\gamma_v(+\infty)$ on the boundary respectively, such that for any pair of $(\xi,\eta)\in U_\epsilon\times V_\epsilon$, there exists a unique rank $1$ geodesic $\gamma_{\xi,\eta}$ connecting them. Moreover, we can require this geodesic is close to $\gamma_v$ at time $0$ in the sense $d(\gamma_v(0),\gamma_{\xi,\eta})<\epsilon$.
	\begin{figure}
		\centering
		\scalebox{0.7}{
			\begin{tikzpicture}
				\draw (0,0) circle (1);
				\node at (0,0) [circle, fill, inner sep=1pt]{};
				\node at (0,0.5) {$\scriptstyle \gamma(0)$};
				\node at (1.6,0.5) {$\scriptstyle B(\gamma(0),\epsilon)$};
				\coordinate [red,label=right:$\scriptstyle V_\epsilon$] (E) at (3,0);
				\coordinate [red,label=left:$\scriptstyle U_\epsilon$] (S) at (-3,0);
				\draw (S) -- (E);
				\draw[red] (2.955,-0.525) arc (-10:10:3);
				\draw[red] (-2.955,0.525) arc (170:190:3);
				\coordinate [label=below right:$\scriptstyle\eta$] (E) at (2.99,-0.35);
				\node at (E) [circle, fill, inner sep=0.5pt] {};
				\coordinate [label=below left:$\scriptstyle\xi$] (X) at (-2.99,-0.35);
				\node at (X) [circle, fill, inner sep=0.5pt] {};
				\draw[directed, blue] (X).. controls (-1,-1) and (0,-0.5).. (E);
			\end{tikzpicture}
		}	
		\caption{Theorem~\ref{thm_3_2}}\label{fig0}
	\end{figure}
\end{theorem}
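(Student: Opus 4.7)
The plan is to build $\gamma_{\xi,\eta}$ as a limit of finite approximating geodesic segments, and then to deduce uniqueness and the rank 1 property from Theorem~\ref{thm_1} (Eschenburg).

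First I would establish continuity of the \emph{sight map} $\psi_p:\partial\tilde{M}\to T^1_p\tilde{M}$ at $p=\gamma_v(0)$, sending $\xi$ to the initial direction of the unique geodesic from $p$ to $\xi$ (uniqueness from Lemma~\ref{lem_3_0}(3)). If $\xi_n\to\xi$ and $u^*$ is any accumulation point of $\psi_p(\xi_n)$, then Theorem~\ref{thm_3_1} forces $\gamma_{u^*}(+\infty)=\xi$, whence $u^*=\psi_p(\xi)$. In particular $\psi_p(\xi)\to -v$ as $\xi\to\gamma_v(-\infty)$, and symmetrically $\psi_p(\eta)\to v$ as $\eta\to\gamma_v(+\infty)$.

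Next, for $\xi,\eta\in\partial\tilde{M}$, set $p_k(\xi)=\gamma_{\psi_p(\xi)}(k)$, $q_k(\eta)=\gamma_{\psi_p(\eta)}(k)$, and let $\sigma_k^{\xi,\eta}$ be the unique (by the no-conjugate-points hypothesis) geodesic segment from $p_k(\xi)$ to $q_k(\eta)$. For each fixed $k$, continuity of geodesics in their endpoints combined with the previous step gives $\sigma_k^{\xi,\eta}\to\gamma_v|_{[-k,k]}$ as $(\xi,\eta)\to(\gamma_v(-\infty),\gamma_v(+\infty))$, so I can fix a large $k$ and then choose $U_\epsilon,V_\epsilon$ small enough that $\sigma_k^{\xi,\eta}$ passes within $\epsilon/2$ of $p$ for $(\xi,\eta)\in U_\epsilon\times V_\epsilon$. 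Reparametrize $\sigma_m^{\xi,\eta}$ for $m\ge k$ so that its closest point to $p$ is at time $0$. Conditional on boundedness of this closest point as $m\to\infty$, compactness in $T^1\tilde{M}$ produces a convergent subsequence with limit $w^*$, and I define $\gamma_{\xi,\eta}:=\gamma_{w^*}$. Theorem~\ref{thm_3_1} applied forward and backward identifies $\gamma_{\xi,\eta}(-\infty)=\xi$ and $\gamma_{\xi,\eta}(+\infty)=\eta$, while $d(p,\gamma_{\xi,\eta})\le\epsilon/2<\epsilon$.

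For uniqueness and rank 1 of $\gamma_{\xi,\eta}$ I invoke Theorem~\ref{thm_1}: a rank 1 geodesic admits no distinct bi-asymptotic geodesic, so any two rank 1 connecting geodesics of $(\xi,\eta)$ must coincide. For rank 1 of the limit itself, I argue by contradiction: if for arbitrarily small neighborhoods there are $(\xi_n,\eta_n)\to(\gamma_v(-\infty),\gamma_v(+\infty))$ with each $\gamma_{\xi_n,\eta_n}$ of rank $\ge 2$, Theorem~\ref{thm_1} yields distinct bi-asymptotes $\gamma'_n$; rerunning the limit construction on $\gamma'_n$---parametrized so its closest point to $p$ remains a definite distance from $\gamma_{\xi_n,\eta_n}$---produces two distinct complete geodesics bi-asymptotic to $\gamma_v$, contradicting rank 1 of $v$ via Theorem~\ref{thm_1} once more.

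\textbf{Main obstacle.} The crux is the conditional compactness step: for fixed $(\xi,\eta)$, one must show that the closest point of $\sigma_m^{\xi,\eta}$ to $p$ remains bounded uniformly in $m$, since the fixed-$k$ continuity only controls things at scale $k$. I would bootstrap this using Lemma~\ref{lem_3_0}(1) (the bounded-asymptote linear comparison) to transfer uniform closeness of $\sigma_k^{\xi,\eta}$ to $\gamma_v|_{[-k,k]}$ into uniform control of $\sigma_m^{\xi,\eta}$ near $p$ for all $m\ge k$. A parallel subtlety arises in ensuring the bi-asymptote $\gamma'_n$ remains a positive distance from $\gamma_{\xi_n,\eta_n}$ in the limit, which depends on the structure of the bi-asymptotic family in the rank $\ge 2$ case.
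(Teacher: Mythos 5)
Your overall architecture (connecting geodesic as a limit of finite segments, endpoints identified via Theorem~\ref{thm_3_1}, rank~1 and uniqueness via Theorem~\ref{thm_1}) matches the paper's, but the step you flag as the ``main obstacle'' is a genuine gap, and the fix you propose does not close it. Lemma~\ref{lem_3_0}(1) compares two \emph{bi-infinite geodesics sharing the same point at $+\infty$}; the segments $\sigma_m^{\xi,\eta}$ and $\sigma_k^{\xi,\eta}$ are finite, have different endpoints, and are asymptotic to nothing, so the linear comparison simply does not apply to them. Without convexity of the distance function (unavailable here, since positive curvature is allowed), there is no soft reason why closeness of $\sigma_k^{\xi,\eta}$ to $p$ at scale $k$ should propagate to $\sigma_m^{\xi,\eta}$ for $m\gg k$. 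The paper isolates exactly this point as a separate statement (Lemma~\ref{lem_3_2}): if $\mathrm{rank}(v)=1$, then for every $b>0$ there is $k_0$ with $d(\gamma_v(0),\gamma_{p,q})<b$ for all $(p,q)\in C(-v,\tfrac{1}{k_0})\times C(v,\tfrac{1}{k_0})$. Its proof is the real work: assuming unboundedness, one first rules out that $p_k$ or $q_k$ stay bounded (using no conjugate points and divergence of geodesic rays), and then, for $p_k,q_k\to\partial\tilde M$, runs an intermediate-value argument along a continuous family of connecting geodesics $\gamma_{k,s}$ to extract a limit geodesic at distance exactly $b$ from $\gamma_v(0)$ that is bi-asymptotic to $\gamma_v$, contradicting rank~1 via Theorem~\ref{thm_1}. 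Your proposal contains no substitute for this argument, so the compactness your construction is ``conditional on'' is never established.

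A second, smaller gap is in your rank-1-of-the-limit argument. When the rank $\ge 2$ connecting geodesics $\gamma_{\xi_n,\eta_n}$ converge to $\gamma_v$ itself, your plan of keeping the bi-asymptote $\gamma'_n$ ``a definite distance'' away has no justification: the bi-asymptotic companions may degenerate onto $\gamma_{\xi_n,\eta_n}$ as $n\to\infty$, and then your limiting procedure yields nothing distinct from $\gamma_v$. The paper avoids this by splitting into two cases on $\lim_n\gamma_n(0)$: if the limit is not $\gamma_v(0)$ one gets a distinct bi-asymptote directly, and if it is $\gamma_v(0)$ one concludes $\mathrm{rank}(v)\ge 2$ from the fact that the set of vectors of rank at least $2$ is closed in $T^1\tilde M$ --- a fact you would need to invoke (or prove) to finish this branch.
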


The following lemma is needed for the proof.

\begin{lemma}\label{lem_3_2}
	Let $\tilde{M}$ as stated in Theorem~\ref{thm_3_2}. $v\in T^1\tilde{M}$. If there exist a constant $b>0$ such that~$\forall k\in\mathbb{Z}^+$, we can find a pair of points $(p_k,q_k)\in C(-v,\frac{1}{k})\times C(v,\frac{1}{k})$ with $d(\gamma_v(0),\gamma_{p_k,q_k})\geq b$. Then $\textrm{rank}(v)\geq 2$.
\end{lemma}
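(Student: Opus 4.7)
The plan is to extract, from the segments $\gamma_{p_k,q_k}$ (possibly after replacing each pair by another admissible one), a geodesic $\beta\ne\gamma_v$ bi-asymptotic to $\gamma_v$; Eschenburg's Theorem~\ref{thm_1} then produces a non-trivial $J\in J^c(v)$ and forces $\textrm{rank}(v)=\dim J^c(v)+1\ge 2$. Set $p:=\gamma_v(0)$ throughout.

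The main obstacle I anticipate is arranging that the closest-approach points $y_k$ of $\gamma_{p_k,q_k}$ to $p$ stay in a bounded subset of $\tilde{M}$, since the hypothesis only bounds the minimum distance from below; a priori the foot of perpendicular could recede to infinity. I would resolve this by exploiting the freedom in the choice of $(p_k,q_k)$. The function $f(x,y):=d(p,\gamma_{x,y})$ is continuous on the connected product of cones $C(-v,1/k)\times C(v,1/k)$ (with connecting geodesics to ideal points provided by Lemma~\ref{lem_3_0}(3)), vanishes at the corner pair $(\gamma_v(-\infty),\gamma_v(+\infty))$, and is at least $b$ at the hypothesized pair; an intermediate value argument then yields an admissible pair with $f\in[b,2b]$. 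Replacing $(p_k,q_k)$ by such a pair, the foot of perpendicular $y_k\in\gamma_{p_k,q_k}$ satisfies $d(p,y_k)\in[b,2b]$.

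With the refined pairs in hand, I would next establish the non-collapsing property $d(p,p_k),d(p,q_k)\to\infty$. Writing $u_k,z_k$ for the initial directions at $p$ of the rays from $p$ to $p_k,q_k$, the cone condition gives $u_k\to-v$ and $z_k\to v$. Were a subsequence of $p_k$ (or $q_k$) bounded, its accumulation would lie on $\gamma_v$, and continuity of geodesics in their endpoints would force $\gamma_{p_k,q_k}$ to converge to a segment of $\gamma_v$ through $p$, contradicting $d(p,\gamma_{p_k,q_k})\ge b$; hence $p_k\to\gamma_v(-\infty)$, $q_k\to\gamma_v(+\infty)$ in the cone topology. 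Now let $w_k$ be the unit tangent to $\gamma_{p_k,q_k}$ at $y_k$ pointing towards $q_k$; since $y_k\in\overline{B(p,2b)}$, a subsequence yields $w_k\to w\in T^1\tilde{M}$. Setting $\beta:=\gamma_w$, passing the uniform bound $d(p,\gamma_{p_k,q_k}(t))\ge b$ to the limit gives $d(p,\gamma_w(t))\ge b$ for all $t$, so $\gamma_w$ avoids $p$ and $w\ne\phi_t(v)$ for every $t$. For bi-asymptoticity, choose $s_k^+>0$ with $\gamma_{w_k}(s_k^+)=q_k$; since $y_k$ stays bounded while $d(p,q_k)\to\infty$, we have $s_k^+=d(y_k,q_k)\to+\infty$, and Continuity at Infinity (Theorem~\ref{thm_3_1}) gives $\gamma_w(+\infty)=\lim_k q_k=\gamma_v(+\infty)$. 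A symmetric argument in negative time yields $\gamma_w(-\infty)=\gamma_v(-\infty)$, and Theorem~\ref{thm_1} concludes that $J^c(v)$ is non-trivial, hence $\textrm{rank}(v)\ge 2$.
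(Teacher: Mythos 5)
Your proposal is correct and follows essentially the same strategy as the paper's proof: rule out bounded subsequences of $p_k,q_k$ via uniqueness and continuity of connecting geodesics, use an intermediate-value argument to pin a point of the connecting segment inside a compact annulus around $\gamma_v(0)$, extract a limiting geodesic bi-asymptotic to but distinct from $\gamma_v$, and invoke Theorem~\ref{thm_1}. The paper runs the intermediate-value step along interpolating curves from the feet $\bar{p}_k,\bar{q}_k\in\gamma_v$ out to $p_k,q_k$ (which, unlike your corner pair $(\gamma_v(\text{--}\infty),\gamma_v(+\infty))$, avoids needing connecting geodesics between arbitrary ideal points, a fact not available without visibility), but this is a cosmetic difference that your argument absorbs by starting the path at an interior pair on $\gamma_v$.
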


\begin{figure}[htbp]
	\centering
	\scalebox{0.7}{
		\begin{tikzpicture}
			\draw (0,0) circle (4);
			\draw[blue] (0,0) circle (0.5);
			\node at (1.2,0) {$\scriptscriptstyle B(\gamma_{v}(0),b)$};
			\coordinate [label = left:$v$] (V) at (0,0);
			\coordinate [label = above:$\gamma_v(+\infty)$] (A) at (0,4);
			\coordinate [label = below:$\gamma_v(\text{--}\infty)$] (B) at (0,-4);
			\node at (V) [circle,fill,inner sep=2pt]{};
			\draw[directed] (V) -- (A);
			\draw (B) -- (V);
			\draw[green] (V) -- (0.8,3.92) node [text=black, midway, right] {$C(v,\frac{1}{k})$};
			\draw[green] (V) -- (-0.8,3.92);

			\draw[red] (V) -- (0.8,-3.92) node [text=black, midway, right] {$C(-v,\frac{1}{k})$};
			\draw[red] (V) -- (-0.8,-3.92);
			\coordinate [label=below:$p_k$] (P) at (-0.3,-2.7);
			\node at (P) [circle, fill, inner sep=1pt] {};
			\coordinate [label=above:$q_k$] (Q) at (-0.3,3);
			\node at (Q) [circle, fill, inner sep=1pt] {};
			\draw[blue, rounded corners=0.3cm] (P)--(-0.7,-0.5)--(-1,0)--(-0.5,1.5)--(Q);
		\end{tikzpicture}
	}
	\caption{Both Sequence of Points Bounded}\label{fig2}
\end{figure}
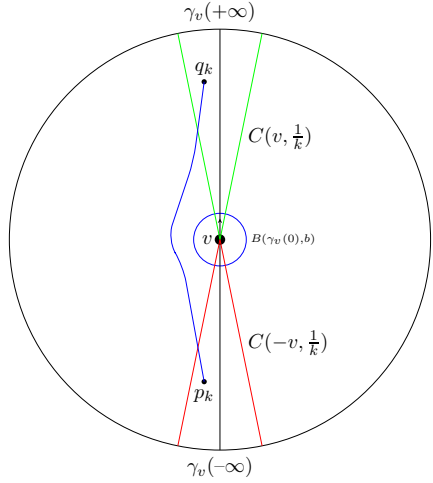

\begin{proof}
	First, we want to prove that at least one of the following two equations holds.
	\begin{equation}
		\lim_{k\to\infty} d(\gamma_v(0),p_k)=\infty,\quad \lim_{k\to\infty} d(\gamma_v(0),q_k)=\infty.
	\end{equation}

	Assume the contrary, we can find some constant $B>0$ such that
	\begin{equation}\label{eq_3_5}
		\left\{
			\begin{aligned}
				d(\gamma_v(0),p_k)&\leq B\\
				d(\gamma_v(0),q_k)&\leq B
			\end{aligned}
		\right. \quad k=1,2,\cdots
	\end{equation}

	As $k$ increases, the angle between $\gamma_v$ and $\gamma_{\pi(v),p_k}$ decreases to zero. Therefore, by choosing a proper sub-sequence, we can find some point $\gamma_v(-t_1)$ on the geodesic $\gamma_v|_{[-B,0]}$ such that $\lim_{i\to\infty} p_{k_i}=\gamma_v(-t_1)$.

	Similarly, by choosing a proper sub-sequence, we can find another point $\gamma_v(t_2)$ as the limit point of $q_{k_j}$. For neater notation, we write $p_k$ and $q_k$ instead of $p_{k_i}$ and $q_{k_j}$.

	Parametrize the geodesic segment $\gamma_{p_k,q_k}$ by setting $d(\pi(v), \gamma_{p_k,q_k}(0))=d(\pi(v), \gamma_{p_k,q_k})\geq b>0$ by assumption, and Equation~\eqref{eq_3_5} shows that $d(p,\gamma_{p_k,q_k})$ has a uniform upper bound $B$ for all $k$.

	Thus, the unit tangent vector $\gamma'_{p_k,q_k}(0)$ must have a converging sub-sequence, denote the limit by $v'\neq v$. We have $\gamma_v(-t_1)$ and $\gamma_v(t_2)$ are on this limiting geodesic $\gamma_{v'}$, contradicting to the assumption that $\tilde{M}$ has no conjugate points.
	\begin{figure}[htbp]
		\centering
		\scalebox{0.7}{
			\begin{tikzpicture}
				\draw (0,0) circle (4);
				\draw[blue] (0,0) circle (0.45);
				\node at (1.2,0) {$\scriptscriptstyle B(\gamma_{v}(0),b)$};
				\coordinate [label = left:$v$] (V) at (0,0);
				\coordinate [label = above:$\gamma_v(+\infty)$] (A) at (0,4);
				\coordinate [label = below:$\gamma_v(\text{--}\infty)$] (B) at (0,-4);
				\node at (V) [circle,fill,inner sep=1pt]{};
				\draw[directed] (V) -- (A);
				\draw (B) -- (V);
				\draw[green] (V) -- (0.8,3.92) node [text=black, midway, right] {$C(v,\epsilon)$};
				\draw[green] (V) -- (-0.8,3.92);

				\draw[cyan] (V) -- (0.8,-3.92) node [text=black, midway, right] {$C(-v,\epsilon)$};
				\draw[cyan] (V) -- (-0.8,-3.92);
				\coordinate [label=below:$p_k$] (P) at (-0.3,-2.7);
				\node at (P) [circle, fill, inner sep=0.5pt] {};
				\coordinate [label=above:$q_k$] (Q) at (-0.3,2.8);
				\node at (Q) [circle, fill, inner sep=0.5pt] {};
				\draw[blue, rounded corners=0.3cm] (P)--(-0.7,-0.5)--(-1,0)--(-0.5,1.5)--(Q);
				\coordinate [label=right:$\scriptstyle{\gamma_{v}(-t_1)}$] (S) at (0,-3);
				\node at (S) [circle, fill, inner sep=0.5pt] {};
				\coordinate [label=right:$\scriptstyle{\gamma_{v}(t_2)}$] (E) at (0,3);
				\node at (E) [circle, fill, inner sep=0.5pt] {};
				\coordinate (W) at (-0.54,0);
				\node at (-0.63,-0.1) {$\scriptscriptstyle{v'}$};
				\draw[red, rounded corners=0.2cm] (S)--(-0.3,-0.9)--(W)--(-0.2,1.3)--(E);
				\draw[red, ->] (-0.51,0) -- (-0.5,0.4);
			\end{tikzpicture}
		}
		\caption{Only One Sequence of Points Bounded}\label{fig3}
	\end{figure}
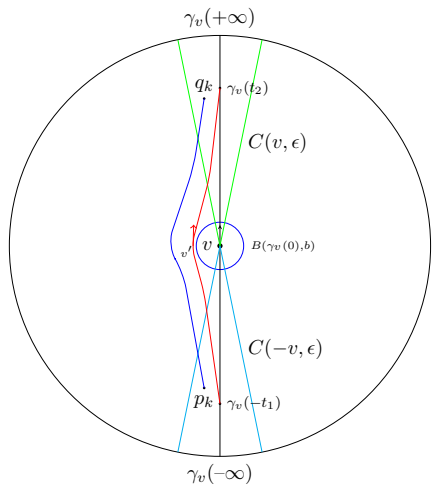

	Next, we want to rule out the case that only one of the distances between $\gamma_v(0)$ and $p_k$ or $q_k$ be bounded above. Without loss of generality, we will show the following statement cannot hold.
	\begin{equation}\label{eq_3_6}
		\exists B>0\quad\text{such that}\quad \left\{
			\begin{aligned}
				d(\gamma_v(0),p_k)&\leq B\\
				d(\gamma_v(0),q_k)&\to\infty
			\end{aligned}
		\right.
	\end{equation}

	Passing to a sub-sequence if needed. Similar to the proof above, we can find the limiting point of $p_k\to\gamma_v(-t_1)$. And $q_k$, we have $q_k\in C(v,\frac{1}{k})$, which implies that $q_k\to\gamma_v(+\infty)$. Therefore, we construct two geodesic segments starting from $\gamma_v(-t_1)$ to $\gamma_v(+\infty)$, contradicting to the divergent property on manifolds with bounded asymptote~\cite{Ru1}.

	Therefore, we can conclude that
	\begin{equation}\label{eq_3_7}
		\left\{
			\begin{aligned}
				\lim_{k\to\infty} d(\gamma_v(0),p_k)&=\infty\\
				\lim_{k\to\infty} d(\gamma_v(0),q_k)&=\infty
			\end{aligned}
		\right.
	\end{equation}

	Now choose $\bar{q}_k$ and $\bar{p}_k$ on $\gamma_v$ such that $d(q_k,\bar{q}_k)=d(q_k,\gamma_v)$ and $d(p_k,\bar{p}_k)=d(p_k,\gamma_v)$. Connect $\bar{p}_k$ and $p_k$ by a smooth curve (not necessarily a geodesic) $b_k$ with $b_k(0)=\bar{p}_k$ and $b_k(1)=p_k$	such that $\measuredangle_{\pi(v)}(\gamma_v(\text{--}\infty),b_k(s))$ increases with respect to $s$. Similarly, connect $\bar{q}_k$ and $q_k$ by $c_k$ with a smooth curve $c_{k}(0)=\bar{q}_k, c_{k}(1)=q_k$ and $\measuredangle_{\pi(v)}(\gamma_v(+\infty),c_k(s))$ increases with respect to $s$.

	Denote $\gamma_{k,s}$ the family of geodesics connecting from $b_k(s)$ to $c_k(s)$ with parametrization $\gamma_{k,0}(0)=\gamma_v(0)$ and $s\mapsto\gamma_{k,s}(0)$ is a continuous curve.

	Then $d(\pi(v),\gamma_{k,0}(0))=0$ and $d(\pi(v),\gamma_{k,1}(0))\geq d(\pi(v),\gamma_{p_k,q_k})\geq b$. By Theorem~\ref{thm_3_1}, the continuity of geodesics guarantees $s_k\in(0,1)$ with $d(\pi(v),\gamma_{k,s_k}(0))=b$. Therefore, for any $k\in\mathbb{Z}^+$, $\gamma'_{k,s_k}(0)\in T^1\tilde{M}$ lie in a compact set. By passing to a sub-sequence, let
	\begin{displaymath}
		v'=\lim_{k\to\infty}\gamma'_{k,s_k}(0).
	\end{displaymath}

	From Theorem~\ref{thm_3_1} the argument about the continuity of geodesics at the boundary and the choice of $p_k, q_k$, we have $\measuredangle_{\pi(v)}(\gamma_v(\text{--}\infty),b_k(s))<\frac{1}{k}$ and $\measuredangle_{\pi(v)}(\gamma_v(+\infty),c_k(s))<\frac{1}{k}$.
	\begin{displaymath}
		\begin{aligned}
			\gamma_{v'}(+\infty)&=\lim_{k\to\infty}c_k(s_k)=\gamma_v(+\infty),\\
			\gamma_{v'}(\text{--}\infty)&=\lim_{k\to\infty}b_k(s_k)=\gamma_v(-\infty).
		\end{aligned}
	\end{displaymath}

	Since $\tilde{M}$ has no conjugate points, $b_k(s_k)$ and $c_k(s_k)$ cannot lie on the geodesic $\gamma_v$ simultaneously. Thus $v'$ cannot be on the geodesic $\gamma_v$.

	Therefore, we have two different geodesics $\gamma_{v'}$ and $\gamma_v$ with the same end points on the boundary, contradicting to the assumption that $\textrm{rank}(v)=1$.
\end{proof}

Now prove Theorem~\ref{thm_3_2}.
\begin{proof}[Proof of Theorem~\ref{thm_3_2}]
	Assume $\textrm{rank}(v)=1$. Lemma~\ref{lem_3_2} guarantees some $k_0\in\mathbb{Z}^+$ and an arbitrary constant $b>0$, such that

	\begin{displaymath}
		d(\gamma_v(0),\gamma_{p,q})<b,\quad\forall (p,q)\in C(-v,\frac{1}{k_0})\times C(v,\frac{1}{k_0}).
	\end{displaymath}

	Pick two sequences of converging points $\{p_k{\}}_{k=1}^{\infty}\subseteq C(-v,\frac{1}{k_0})$ and $\{q_k{\}}_{k=1}^{\infty}\subseteq C(v,\frac{1}{k_0})$. Assume that $p_k\to\xi$ and $q_k\to\eta$ in the cone topology.

	Let $\gamma_{p_k,q_k}$ denote the geodesic connecting each pair of $p_k, q_k$ with parametrization such that $d(\gamma_v(0),\gamma_{p_k,q_k}(0))<b$. Similar to the proof of the previous lemma, by passing to a sub-sequence if needed, we can assume that
	\begin{displaymath}
		v'=\lim_{k\to\infty}\gamma'_{p_k,q_k}(0).
	\end{displaymath}

	Using Theorem~\ref{thm_3_1}, we have that
	\begin{displaymath}
		\gamma_{v'}(\text{--}\infty)=\lim_{k\to\infty}p_k=\xi,\quad		\gamma_{v'}(\infty)=\lim_{k\to\infty}q_k=\eta.
	\end{displaymath}

	By the choice of $p_k, q_k$, $\measuredangle_{\gamma_v(0)}(-v, p_k)\leq\frac{1}{k_0}$ and $\measuredangle_{\gamma_v(0)}(v, q_k)\leq\frac{1}{k_0}$. Since $b>0$ is arbitrary, let $b=\epsilon$. We can see $(\xi,\eta)\in U_\epsilon\times V_\epsilon$. And $\gamma_{v'}$ is the connecting geodesic. We can also get $\gamma_{v'}$ close to $\gamma_v$ with $d(\gamma_v(0),\gamma_{v'})<\epsilon$.

	The last thing is to show that this geodesic $\gamma_{v'}$ is rank $1$. Assume not, that is to say, for any $\epsilon_n>0$, we can find converging sequence of points $(\xi_n,\eta_n)\in U_\epsilon\times V_\epsilon$, $\xi_n\to\gamma_v(\text{--}\infty), \eta_n\to\gamma_v(+\infty)$, but the connecting geodesic $\gamma_n=\gamma_{\xi_n,\eta_n}$ has rank at least $2$.

	Parametrize $\gamma_n$ by $d(\gamma_v(0),\gamma_n(0))=d(\gamma_v(0),\gamma_n)$. Thus $\{\gamma_n(0){\}}_{n=1}^\infty\subseteq B(\gamma_v(0),\epsilon)$. By passing to sub-sequence, we can assume $\lim_{n\to\infty}\gamma_n(0)$ exists.

	There are two cases of the limit points.

	If $\lim_{n\to\infty}\gamma_n(0)\neq\gamma_v(0)$, pick a sub-sequence if needed, let $v'=\lim_{n\to\infty}\gamma'_n(0)$. We have $v'\neq \phi_t(v),t\in\mathbb{R}$, but
	\begin{displaymath}
		\gamma_{v'}(\text{--}\infty)=\lim_{n\to\infty}\gamma_n(-\infty)=\gamma_v(-\infty),\quad	\gamma_{v'}(\infty)=\lim_{n\to\infty}\gamma_n(+\infty)=\gamma_v(+\infty).
	\end{displaymath}

	Therefore $\textrm{rank}(v)\geq2$, contradiction.

	If $\lim_{n\to\infty}\gamma_n(0)=\gamma_v(0)$. Pick a sub-sequence if needed, let $\lim_{n\to\infty}\gamma'_n(0)=v$ as they share the same end points. Because the higher rank vectors form a closed set in $T^1\tilde{M}$, we have that $\textrm{rank}(v)\geq 2$, contradiction.
\end{proof}

\section{Local Product Structure and Anosov Closing Lemma}\label{sec4}

In this part we will show that a rank $1$ vector admits local product structure. Moreover, given the manifold satisfies uniform visibility, the Anosov Closing Lemma also holds to the restriction to $\Omega_1$.

\begin{theorem}[Local Product Structure]\label{thm_4_1}
	Let $M$ be a connected, complete manifold with no conjugate points. Suppose $M$ satisfies bounded asymptote. Let $v_0\in T^1M$ be a rank $1$ unit vector and $\epsilon>0$ small. Given any two unit vectors $u,w\in \Omega_1$ close to $v_0$, we can find a rank $1$ unit vector $v$ close to $v_0$, such that $v\in W_{\epsilon}^{ss}(w)\cap W_{\epsilon}^{su}(\phi_t(u))$ for some $|t|<\epsilon$.
\end{theorem}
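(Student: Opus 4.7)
The plan is to lift to the universal cover $\tilde{M}$, apply Theorem~\ref{thm_3_2} to produce a rank $1$ geodesic connecting the backward endpoint of $\gamma_{\tilde{u}}$ to the forward endpoint of $\gamma_{\tilde{w}}$, and then fix the parameters along this geodesic via Busemann-function level sets so that the resulting vector simultaneously sits on the stable horocycle of $\tilde{w}$ and on the unstable horocycle of $\phi_t(\tilde{u})$ for an appropriate small $t$.

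Choose lifts $\tilde{v}_0,\tilde{u},\tilde{w}\in T^1\tilde{M}$ with $\tilde{u},\tilde{w}$ in a small $d_1$-neighborhood of $\tilde{v}_0$. Continuity of the endpoint map in the cone topology, which follows from Theorem~\ref{thm_3_1}, places $\gamma_{\tilde{u}}(-\infty)$ and $\gamma_{\tilde{w}}(+\infty)$ in the neighborhoods $U_\epsilon,V_\epsilon$ supplied by Theorem~\ref{thm_3_2} for the rank $1$ vector $\tilde{v}_0$. Hence Theorem~\ref{thm_3_2} yields a unique rank $1$ geodesic $\tilde{\gamma}$ with $\tilde{\gamma}(-\infty)=\gamma_{\tilde{u}}(-\infty)$, $\tilde{\gamma}(+\infty)=\gamma_{\tilde{w}}(+\infty)$, and $d(\pi(\tilde{v}_0),\tilde{\gamma})<\epsilon$. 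Along $\tilde{\gamma}$ the Busemann function $f_{\tilde{w}}$ is strictly monotone and surjective, so there is a unique parameter $s_0$ with $f_{\tilde{w}}(\tilde{\gamma}(s_0))=0$; set $\tilde{v}:=\tilde{\gamma}'(s_0)$, which by definition lies in $W^{SS}(\tilde{w})$. Analogously, setting $t:=f_{-\tilde{u}}(\pi(\tilde{v}))$ puts $\pi(\tilde{v})$ and $\pi(\phi_t(\tilde{u}))=\gamma_{\tilde{u}}(t)$ on a common unstable horocycle, so $\tilde{v}\in W^{SU}(\phi_t(\tilde{u}))$; since $\pi(\tilde{v})$ is within $O(\epsilon)$ of $\pi(\tilde{u})$, shrinking the initial neighborhood yields $|t|<\epsilon$.

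The remaining step is to upgrade these horocycle memberships to memberships in the strong stable/unstable manifolds $W^{ss}(\tilde{w})$ and $W^{su}(\phi_t(\tilde{u}))$. By Eschenburg (as quoted in the proof of Proposition~\ref{thm_3_3}), $\tilde{v}\in W^{SS}(\tilde{w})$ already forces $\gamma_{\tilde{v}}$ and $\gamma_{\tilde{w}}$ to be positively asymptotic, and Lemma~\ref{lem_3_0}(1) bounds $d(\gamma_v(t),\gamma_w(t))\le A\cdot d(\pi(\tilde{v}),\pi(\tilde{w}))$ for all $t\ge 0$. If this distance fails to tend to zero, then along some $t_n\to\infty$ it remains bounded below by some $a>0$, and a deck-translate limiting argument (in the spirit of the proof of Proposition~\ref{thm_3_3}) produces a geodesic bi-asymptotic to $\gamma_{\tilde{v}}$ yet distinct from it, contradicting $\textrm{rank}(\tilde{v})=1$ via Theorem~\ref{thm_1}. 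The unstable side is symmetric, and projecting back to $T^1M$ yields the desired $v$. The main obstacle is precisely this last upgrade: the argument of Proposition~\ref{thm_3_3} exploited recurrence of $v$ itself, whereas the newly constructed $\tilde{v}$ is not known to be recurrent, so the contradiction must instead be driven by the rank $1$ property of $\tilde{v}$ together with bounded asymptote and Eschenburg's rigidity.
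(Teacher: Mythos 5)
Your construction of the connecting vector follows the paper's own route: Theorem~\ref{thm_3_1} pushes the endpoints $\gamma_{\tilde{u}}(\text{--}\infty)$ and $\gamma_{\tilde{w}}(+\infty)$ into the neighborhoods $U_\epsilon\times V_\epsilon$ of Theorem~\ref{thm_3_2}, which then supplies a rank $1$ geodesic $\tilde{\gamma}$ with the prescribed endpoints passing $\epsilon$-close to $\pi(\tilde{v}_0)$. Your additional step of pinning down the parametrization via the level sets of $f_{\tilde{w}}$ and $f_{-\tilde{u}}$ is more careful than the paper, which stops at ``the projection of $\tilde{v}$ gives the vector we want''; it is legitimate under bounded asymptote (where asymptotic geodesics are integral curves of the Busemann gradient), and your Lipschitz estimate for $|t|$ is fine.

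The genuine gap is in your final paragraph, and you have correctly located it but not closed it. The argument of Proposition~\ref{thm_3_3} cannot be rerun ``driven by the rank $1$ property of $\tilde{v}$'' alone: the contradiction there is obtained by applying deck transformations $\mathrm{d}g_n\phi_{t_n}$ that return the orbit to a fixed compact neighborhood of $v$, which is exactly what recurrence provides. For your newly constructed $\tilde{v}$ the points $\gamma_{\tilde{v}}(t_n)$ simply escape to infinity in $\tilde{M}$, there is no compactness from which to extract the limiting vector $w'$, and hence no bi-asymptotic companion geodesic to feed into Theorem~\ref{thm_1}. Indeed, if rank $1$ plus bounded asymptote forced $d(\gamma_{\tilde{v}}(t),\gamma_{\tilde{w}}(t))\to 0$ for merely asymptotic geodesics, the set $\Omega_1$ would not need to be defined by the extra requirement that $W^{ss}$ and $W^{SS}$ coincide, and Proposition~\ref{thm_3_3} would be vacuous. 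So as written your upgrade from $W^{SS}(\tilde{w})\cap W^{SU}(\phi_t(\tilde{u}))$ to $W^{ss}(w)\cap W^{su}(\phi_t(u))$ does not go through. It is worth noting that the paper's own proof does not attempt this upgrade either: what it actually establishes (and what is used later, cf.\ the remark following the theorem and the proof of Theorem~\ref{thm_5_0}) is the horocycle/endpoint version, with the strong stable and unstable manifolds recovered only for vectors in $\Omega_1$ via Proposition~\ref{thm_3_3}. If you intend the literal $W^{ss}/W^{su}$ statement, you must either restrict to vectors for which the coincidence is known (e.g.\ recurrent rank $1$ vectors) or supply a genuinely new argument; the one you sketch will not work.
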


\begin{proof}
	This is a direct consequence of Theorem~\ref{thm_3_1} and Theorem~\ref{thm_3_2}.

	Given a rank $1$ unit vector $v_0$, lift it to the universal cover $\tilde{v}_0$. Theorem~\ref{thm_3_2} shows that there are two neighborhood $U_{\epsilon/2A}, V_{\epsilon/2A}$ around $\gamma_{v_0}(\text{--}\infty)$ and $\gamma_{v_0}(+\infty)$ on the boundary respectively, such that for any pair of boundary point $(\xi,\eta)\in U_{\epsilon/2A}\times V_{\epsilon/2A}$, there is a rank $1$ connecting geodesic. Here $A$ is the constant given in Lemma~\ref{lem_3_0}.

	Theorem~\ref{thm_3_1} the continuity argument implies that as long as $\tilde{u}, \tilde{w}$ sufficiently close to $\tilde{v}_0$, the end points of $\gamma_{\tilde{u}}$ and $\gamma_{\tilde{w}}$ will be close to the end points of $\gamma_{\tilde{v}_0}$. Therefore, we have that $\gamma_{\tilde{u}}(\text{--}\infty)\in U_{\epsilon/2A}$ and $\gamma_{\tilde{w}}(+\infty)\in V_{\epsilon/2A}$, admits a connecting geodesic $\gamma_{\tilde{v}}$ with $\tilde{v}$ chosen close to $\tilde{v}_0$. Then the projection of $\tilde{v}$ onto the base $M$ gives the rank $1$ unit vector we want.
\end{proof}

\begin{remark}
	From the proof it is not hard to see that the local product structure also holds on the universal cover.

\end{remark}

\begin{theorem}[Rank $1$ Anosov Closing Lemma]\label{thm_4_2}
	Let $M$ be a complete, connected manifold with no conjugate points. Suppose $M$ satisfies bounded asymptote and uniform visibility.

	Given a rank $1$ unit vector $u$, there exists an open neighborhood $U\ni u$ satisfies that for any $\epsilon>0$, there exists $\delta>0$ and $T>0$, such that for any $w\in U$ and $t>T$ with $d_1(w,\phi_t(w))<\delta$, we can find a rank $1$ unit vector $v_0$ and $t_0$ with $|t_0-t|<\epsilon$, $\phi_{t_0}(v_0)=v_0$ and $d_1(\phi_s(v_0),\phi_s(w))<\epsilon$ for any $0<s<\min{\{t,t_0 \}}$.
\end{theorem}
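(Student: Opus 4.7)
I work on the universal cover $\tilde M$. Given $w\in U$ with $d_1(w,\phi_t(w))<\delta$, fix a lift $\tilde w$ of $w$; since $\phi_t(\tilde w)$ is a lift of $\phi_t(w)$ and the latter is close to $w$, there is a deck transformation $g\in\Gamma$ with $d_1(g\tilde w,\phi_t(\tilde w))<\delta$, provided $\delta$ is small and $U$ is taken small enough that any two nearby lifts remain close. Write $\xi^{\pm}:=\gamma_{\tilde w}(\pm\infty)$; since $\gamma_{\phi_t(\tilde w)}$ is the same geodesic as $\gamma_{\tilde w}$, continuity at infinity (Theorem~\ref{thm_3_1}) applied to $g\tilde w\approx\phi_t(\tilde w)$ yields that $g\xi^{\pm}$ is close to $\xi^{\pm}$. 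Because $w$ is close to the rank $1$ vector $u$, the same theorem also places $\xi^{\pm}$ close to $\gamma_{\tilde u}(\pm\infty)$. Applying Theorem~\ref{thm_3_2} to $u$ at scale $\epsilon$ produces cone neighborhoods $U_\epsilon$, $V_\epsilon$ around $\gamma_{\tilde u}(\text{--}\infty)$, $\gamma_{\tilde u}(+\infty)$ such that any pair in $U_\epsilon\times V_\epsilon$ is joined by a rank $1$ geodesic within $\epsilon$ of $\pi(\tilde u)$; choosing $\delta$ and $U$ small enough forces both $\xi^{\pm}$ and $g\xi^{\pm}$ into these cones.

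\textbf{Producing the axis.} The main step is to find a rank $1$ $g$-invariant geodesic $\gamma_{\tilde v_0}$ close to $\gamma_{\tilde w}$. I would show that the iterates $g^{n}\xi^{+}$ form a Cauchy sequence in the cone topology converging to some $\eta^{+}\in V_\epsilon$ fixed by $g$, and symmetrically $g^{-n}\xi^{-}\to\eta^{-}\in U_\epsilon$. Intuitively, $g$ translates points by roughly $t>T$, so the successive segments $\gamma_{g^{n}\pi(\tilde w),\,g^{n+1}\pi(\tilde w)}$ drift to infinity, and uniform visibility forces the angles they subtend at a fixed base point to shrink, producing the needed convergence. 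Once $\eta^{\pm}$ have been located inside $U_\epsilon\times V_\epsilon$, Theorem~\ref{thm_3_2} supplies a rank $1$ geodesic $\gamma_{\tilde v_{0}}$ from $\eta^{-}$ to $\eta^{+}$ with $d(\pi(\tilde v_{0}),\pi(\tilde w))<\epsilon$. Because $g$ fixes both endpoints, and rank $1$ combined with Theorem~\ref{thm_1} forbids a bi-asymptotic companion, $g$ must preserve $\gamma_{\tilde v_{0}}$; hence it acts as translation by some $t_{0}\in\mathbb{R}$ along $\gamma_{\tilde v_{0}}$, and projecting $\tilde v_{0}$ to $T^{1}M$ yields a rank $1$ periodic vector $v_{0}$ of period $t_{0}$.

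\textbf{Estimates and main obstacle.} The period match $|t_{0}-t|<\epsilon$ follows from comparing $g\tilde v_{0}=\phi_{t_{0}}(\tilde v_{0})$ with $g\tilde w$ and $\phi_{t}(\tilde w)$: since $\tilde v_{0}$ is $\epsilon$-close to $\tilde w$, bounded asymptote applied on stable and unstable sides (Lemma~\ref{lem_3_0}.1) propagates $d_{1}$ controllably between the two orbits, and a triangle inequality combined with local uniform continuity of $\phi_{s}$ bounds $|t_{0}-t|$ in terms of $\delta$. The shadowing estimate $d_{1}(\phi_{s}v_{0},\phi_{s}w)<\epsilon$ on $0\le s\le\min\{t,t_{0}\}$ is obtained by lifting both orbits, noting that they sit between common cones supplied by Theorem~\ref{thm_3_2}, and invoking uniform visibility once more to pinch the two geodesics together on the prescribed interval. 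The main obstacle is precisely the construction of the axis for $g$, namely the existence and location of $\eta^{\pm}$: in the negatively curved setting this is immediate from exponential contraction of the boundary action, whereas here we have only the angular bound given by uniform visibility together with the linear divergence lower bound of Lemma~\ref{lem_3_0}.2, and combining these into a genuine Cauchy argument on $\partial\tilde M$ — rather than mere accumulation — is the delicate technical heart of the proof.
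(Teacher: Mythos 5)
Your overall architecture matches the paper's: approximate the near-return by a deck transformation $g$, locate two fixed points of $g$ on $\partial\tilde M$ inside the cone neighborhoods of $\gamma_{\tilde u}(\pm\infty)$, invoke Theorem~\ref{thm_3_2} to get a rank~$1$ connecting geodesic which is then an axis of $g$, and finally estimate the translation length and the shadowing. However, you leave open exactly the step you correctly identify as the heart of the matter, and the paper closes it by a completely different (and much softer) device. You propose to show that the iterates $g^{n}\xi^{+}$ form a \emph{Cauchy} sequence on the boundary, and you concede you do not see how to get this from uniform visibility plus Lemma~\ref{lem_3_0}. The paper never attempts any contraction or Cauchy argument. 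It observes that $g_n^{-1}x\to\gamma_{\tilde u}(+\infty)$ and $g_n x\to\gamma_{\tilde u}(\text{--}\infty)$ (via Theorem~\ref{thm_3_1}), deduces that for large $n$ one has $g_n\overline{U}_{\epsilon/2}\subseteq\overline{U}_{\epsilon/2}$ and $g_n^{-1}\overline{V}_{\epsilon/2}\subseteq\overline{V}_{\epsilon/2}$, and then applies the \emph{Brouwer Fixed Point Theorem} to these closed disks (the ideal boundary is a sphere, so $\overline{U}_{\epsilon/2}$ and $\overline{V}_{\epsilon/2}$ are closed balls). Existence of fixed points $\xi_n\in U_{\epsilon/2}$, $\eta_n\in V_{\epsilon/2}$ follows with no dynamical control on the boundary action whatsoever. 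As written, your proof is incomplete at this point; you should either adopt the Brouwer route or supply the missing convergence argument.

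There is a second genuine gap. For the shadowing bound $d_1(\phi_s v_0,\phi_s w)<\epsilon$ on $[0,\min\{t,t_0\}]$ you say the two lifted geodesics ``sit between common cones'' and that uniform visibility will ``pinch'' them together. On a manifold with no conjugate points there is no convexity of the distance function between geodesics, so closeness of the four endpoints does \emph{not} imply closeness along the whole segment; this is precisely why the paper flags this step as ``much harder than on manifolds with non-positive curvature or no focal points'' and spends the entire second half of the proof on it. The paper's argument is a separate contradiction: assuming a time $t_{n,0}$ where the geodesics are $\epsilon$-separated, it introduces the auxiliary connecting segment $c_n$ and the segment $d_n$, uses Ruggiero's divergence estimate (Lemma~\ref{lem_3_0}.2) to force the angle $\alpha_n\to 0$ and hence $\beta_n\to\pi$, and then uses uniform visibility to drive the distance $h_n\to 0$, contradicting the assumed separation. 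Your proposal contains no substitute for this chain of estimates, so the shadowing claim is unsupported. (Your period estimate $|t_0-t|<\epsilon$ via triangle inequalities is essentially the paper's computation of $\omega_n$ and is fine in spirit.)
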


\begin{proof}
	Prove by contradiction. Assume the Anosov Closing Lemma does not hold for $\phi_t$, we can find $u\in U$ and $\epsilon>0$ with a sequence of rank $1$ unit vectors in $v_n\to u$ and an increasing sequence of positive numbers $t_n\to\infty$ such that $d_1(v_n,\phi_{t_n}(v_n))\to 0$. But there are no rank $1$ periodic orbit of $\phi_t$ with period close to $t_n$ that $\epsilon$-shadows the orbit of $v_n$.

	Lift to the universal cover $T^1\tilde{M}$, we can state the previous argument as following: there exists $\epsilon>0$, $\{\tilde{v}_n{\}}_{n=1}^\infty\subseteq T^1\tilde{M}$, $\{t_n{\}}_{n=1}^\infty$, $\{g_n{\}}_{n=1}^\infty\subseteq\Gamma$, such that
	\begin{displaymath}
		\tilde{v}_n\to\tilde{u},~t_n>n\to\infty,~\text{and}~d_1(\tilde{v}_n,\mathrm{d}g_n\phi_{t_n}(\tilde{v}_n))<\frac{1}{n}.
	\end{displaymath}

	But there is no lift of a sequences of periodic geodesics. That is to say, there is no sequence of axis geodesics $\{\gamma_n{\}}_{n=1}^\infty$ with $g_n$ acting as translation on $\gamma_n$ by $g_n\gamma_n(s)=\gamma_n(s+\omega_n)$ where $\omega_n\in(t_n-\epsilon,t_n+\epsilon)$. And in this period, $\gamma_n$ $\epsilon$-shadows $\gamma_{\tilde{v}_n}|_{[0,t_n]}$.

	Now we will show that for sufficiently large $n$, $g_n$ given above is exactly the axial isometry, leading to the contradiction.

	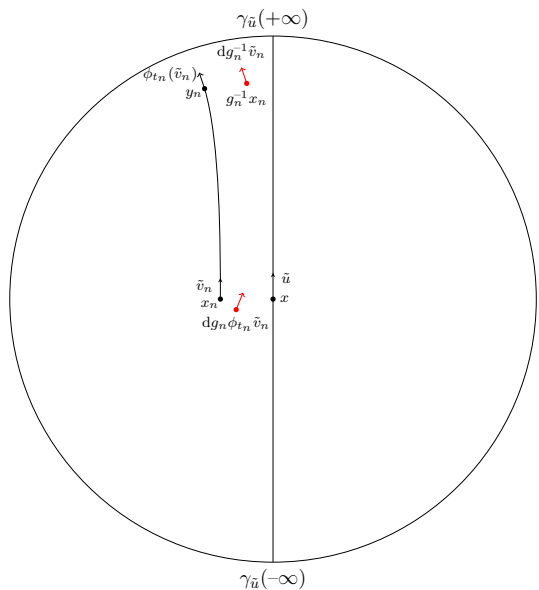
\begin{figure}[htbp]
		\centering
		\scalebox{0.7}{
			\begin{tikzpicture}
				\draw (0,0) circle (5);
				\coordinate [label=right:$\scriptstyle x$] (X) at (0,0);
				\node at (0.25,0.4) {$\scriptstyle \tilde{u}$};
				\node at (X) [circle, fill, inner sep=1pt]{};
				\coordinate [label=above:$\gamma_{\tilde{u}}(+\infty)$] (E) at (0,5);
				\draw [directed] (X) -- (E);
				\coordinate [label=below:$\gamma_{\tilde{u}}(\text{--}\infty)$] (S) at (0,-5);
				\draw (X) -- (S);
				\coordinate [label=above left:$\scriptstyle\tilde{v}_n$] (V) at (-1,0);
				\node at (V) [circle, fill, inner sep=1pt]{};
				\node at (-1.2,-0.1) {$\scriptstyle{x_n}$};
				\coordinate [label=above left:$\scriptstyle\phi_{t_n}(\tilde{v}_n)$] (F) at (-1.3,4);
				\node at (F) [circle, fill, inner sep =1pt]{};
				\node at (-1.5,3.9) {$\scriptstyle y_n$};
				\draw [directed] (V).. controls (-1,1) and (-1,3).. (F);
				\draw [->] (F) -- (-1.4,4.3);
				\coordinate [label=below:$\scriptstyle g^{\text{--}1}_n x_n$] (N) at (-0.5,4.1);
				\node at (N) [red, circle, fill, inner sep=1pt]{};
				\draw [red, ->] (N) -- (-0.6,4.4);
				\node at (-0.6,4.7) {$\scriptstyle \mathrm{d}g_n^{\text{--}1}\tilde{v}_n$};
				\coordinate [label=below:$\scriptstyle\mathrm{d}g_n\phi_{t_n}\tilde{v}_n$] (O) at (-0.7,-0.2);
				\node at (O) [red, circle, fill, inner sep=1pt]{};
				\draw [red, ->] (O) -- (-0.57,0.12);
			\end{tikzpicture}
		}
		\caption{Rank $1$ Anosov Closing Lemma}\label{fig4}
	\end{figure}

	Let $x=\pi(\tilde{u})$, $x_n=\pi(\tilde{v_n})$, $y_n=\pi(\phi_{t_n}(\tilde{v}_n))$ as in the Figure~\ref{fig4}. $g_n$ is an isometry, we have
	\begin{equation}\label{eq_4_1}
		d(y_n,g^{-1}_n x_n)=d(g_n y_{n},x_n)<\frac{1}{n}.
	\end{equation}

	Here $y_n=\gamma_{\tilde{v}_n}(t_n)$, using Theorem~\ref{thm_3_1} the continuity argument, in the cone topology we have the following limit holds

	\begin{equation}\label{eq_4_2}
		\lim_{n\to\infty}y_n=\lim_{n\to\infty}\gamma_{\tilde{v}_n(t_n)}=\gamma_{\tilde{u}}(+\infty).
	\end{equation}

	Equation~\eqref{eq_4_1} and~\eqref{eq_4_2} and together with the cone topology implies that
	\begin{displaymath}
		\lim_{n\to\infty}g^{-1}_n x_n=\gamma_{\tilde{u}}(+\infty).
	\end{displaymath}

	Thus, we have
	\begin{equation}\label{eq_4_3}
		\lim_{n\to\infty}g^{-1}_n x=\gamma_{\tilde{u}}(+\infty).
	\end{equation}

	And similarly, using the method in~\cite{LWW}~(Proposition 4), we can get
	\begin{equation}\label{eq_4_4}
		\lim_{n\to\infty}g_n x=\gamma_{\tilde{u}}(\text{--}\infty).
	\end{equation}

	For the fixed $\epsilon$, choose $V_{\epsilon/2}$ and $U_{\epsilon/2}$ around $\gamma_{\tilde{u}}(\pm\infty)$ as in Theorem~\ref{thm_3_2}.  As the ideal boundary homeomorphic to the unit sphere $\partial\tilde{M}\simeq\mathbb{S}^{n-1}$, we can regard $V_{\epsilon/2}$ and $U_{\epsilon/2}$ open disks in $\mathbb{R}^{n-1}$.

	When $n$ is sufficiently large, Equation~\eqref{eq_4_3} and~\eqref{eq_4_4} imply that
	\begin{displaymath}
		g_n\overline{U}_{\epsilon/2}\subseteq\overline{U}_{\epsilon/2},\quad g^{-1}_n\overline{V}_{\epsilon/2}\subseteq\overline{V}_{\epsilon/2}.
	\end{displaymath}

	Apply the Brouwer Fixed Point Theorem on $\overline{U}_{\epsilon/2}$ and $\overline{V}_{\epsilon/2}$, let $\xi_n\in U_{\epsilon/2}$ and $\eta_n\in V_{\epsilon/2}$ be the fixed points for $g_n$
	\begin{equation}\label{eq_4_5}
		g_n\xi_n=\xi_n,\quad g^{-1}_n\eta_n=\eta_n.
	\end{equation}

	Theorem~\ref{thm_3_2} implies a rank $1$ connecting geodesic $\gamma_n$ with $\gamma_n(\text{--}\infty)=\xi_n$, $\gamma_n(+\infty)=\eta_n$. Parametrize $d(x,\gamma_n(0))=d(x,\gamma_n)$, we have $d(\gamma_{u}(0),\gamma_n)<\epsilon/2$. Let $p_n=\gamma_n(0)$. The Equation~\eqref{eq_4_5} shows that $g_n$ is the axial isometry with the axis $\gamma_n$. That is to say
	\begin{displaymath}
		g_n^{-1}\gamma_n(t)=\gamma_n(t+\omega_n),\quad\forall~t\in\mathbb{R},~\omega_n>0.
	\end{displaymath}

	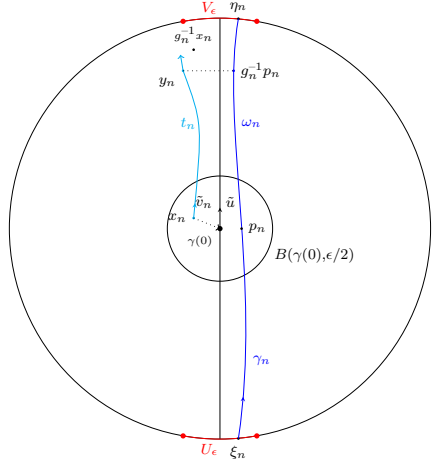
\begin{figure}[htbp]
		\centering
		\scalebox{0.7}{
			\begin{tikzpicture}
				\coordinate [label=below left:$\scriptscriptstyle\gamma(0)$] (O) at (0,0);
				\draw (O) circle (1);
				\draw (O) circle (4);
				\node at (O) [circle, fill, inner sep=1pt]{};
				\node at (1.8,-0.5) {$\scriptstyle B(\gamma(0),\epsilon/2)$};
				\coordinate (E) at (0,4);
				\node at (-0.2,4.2) [red]{$\scriptstyle V_\epsilon$};
				\coordinate (S) at (0,-4);
				\node at (-0.2,-4.2) [red] {$\scriptstyle U_\epsilon$};
				\draw [directed] (O) -- (E);
				\node at (0.2,0.5) {$\scriptstyle\tilde{u}$};
				\draw (S) -- (O);
				\draw[red] (0.7,3.94) arc (80:100:4);
				\draw[red] (-0.7,-3.94) arc (260:280:4);
				\node at (0.7,3.94) [red, circle, fill, inner sep=1pt]{};
				\node at (0.7,-3.94) [red, circle, fill, inner sep=1pt]{};
				\node at (-0.7,3.94) [red, circle, fill, inner sep=1pt]{};
				\node at (-0.7,-3.94) [red, circle, fill, inner sep=1pt]{};

				\coordinate [label=above:$\scriptstyle\eta_n$] (E) at (0.35,3.99);
				\node at (E) [circle, fill, inner sep=0.5pt] {};
				\coordinate [label=below:$\scriptstyle\xi_n$] (X) at (0.35, -3.99);
				\node at (X) [circle, fill, inner sep=0.5pt] {};
				\draw[directed, blue] (X).. controls (0.8,-1) and (0,2).. (E);
				\coordinate [label=right:$\scriptstyle p_n$] (P) at (0.41,0);
				\node at (P) [circle, fill, inner sep=0.5pt] {};
				\node at (0.8,-2.5) [blue] {$\scriptstyle\gamma_n$};
				\coordinate [label=below left:$\scriptstyle y_n$] (Y) at (-0.7,3);
				\node at (Y) [cyan, circle, fill, inner sep=0.5pt] {};
				\draw [cyan, ->] (Y) -- (-0.75,3.3);

				\coordinate [label=above:$\scriptscriptstyle g^{\text{--}1}_nx_n$] (Z) at (-0.5,3.4);
				\node at (Z) [circle, fill, inner sep=0.5pt] {};
				\coordinate [label=right:$\scriptstyle g^{\text{--}1}_np_n$] (Q) at (0.26,3);
				\node at (Q) [blue, circle, fill, inner sep=0.5pt] {};
				\coordinate [label=left:$\scriptstyle x_n$] (T) at (-0.5,0.2);
				\node at (T) [cyan, circle, fill, inner sep=0.5pt]{};
				\draw[directed, cyan] (T).. controls (-0.33,2).. (Y);
				\node at (-0.3,0.5) {$\scriptstyle\tilde{v}_n$};
				\node at (0.6,2) [blue] {$\scriptstyle \omega_n$};
				\node at (-0.6,2) [cyan] {$\scriptstyle t_n$};
				\draw [dotted, ->] (T) -- (O);
				\draw [dotted, thin] (Y) -- (Q);
			\end{tikzpicture}
		}
		\caption{Shadowing by Axis}\label{fig5}
	\end{figure}

	Thus, for sufficiently large $n$, we have
	\begin{displaymath}
		d(g^{-1}_{n}\gamma_n (0),g^{-1}_n x_n)=d(\gamma_n(0),x_n)\leq d(\gamma_n(0),x)+d(x,x_n)\leq\epsilon/2+1/n<\epsilon.
	\end{displaymath}

	We can estimate $\omega_n$ by:
	\begin{displaymath}
		\begin{aligned}
			\omega_n&=d(\gamma_n(0),g^{-1}_n\gamma_n(0))\\
					&\leq d(\gamma_n(0)+x_n)+d(x_n,y_n)+d(y_n,g^{-1}_n\gamma_n(0))\\
					&\leq\epsilon+t_n+d(y_n,g^{-1}_n x_n)+d(g^{-1}_n x_n,g^{-1}_n\gamma_n(0))\\
					&\leq\epsilon+t_n+\frac{1}{n}+\frac{1}{n}+\epsilon/2=t_n+2\epsilon.
		\end{aligned}
	\end{displaymath}

	Similarly, by replacing $g_n$ by $g^{-1}_n$ we have $\omega_n\geq t_n-2\epsilon$.

	Since rank $1$ vectors form an open set, for sufficiently large $n$, $\gamma_n'(0)$ is very close to the rank $1$ vector $\tilde{u}$, thus it is also rank $1$ unit vector. Only need to show that on $\tilde{M}$, $\gamma_n|_{[0,\omega_n]}$~$\epsilon$-shadows $\gamma_{\tilde{v}_n}|_{[0,t_n]}$, which leads to a rank $1$ periodic orbit $\epsilon$-shadows $\gamma_{v_n}|_{[0,t_n]}$ on $M$ as we want.

	This result can be easily achieved on manifolds with non-positive curvature or no focal points, but much harder on manifolds with no conjugate points and bounded asymptote. Uniform visibility is needed in our proof here.

	Prove this by contradiction. Assume the conclusion fails. That is to say we have the distance of the end points are controlled by any given $\epsilon$: $d(\gamma_{v_n}(0),\gamma_n(0))<\epsilon$, $d(\gamma_{v_n}(t_n),\gamma_n(\omega_n))<\epsilon$. But the maximal distance of $\gamma_n$ and $\gamma_{\tilde{v}_n}$ is not controlled by $\epsilon$.
	\begin{equation}\label{eq_4_9}
		\max_{0\leq t\leq\min{\{t_n,\omega_n\}}} d(\gamma_n(t),\gamma_{\tilde{v}_n}(t))>\epsilon.
	\end{equation}

	On $\tilde{M}$, see Figure~\ref{fig5}
	\begin{displaymath}
		d(g_n^{-1}x_n,\gamma_{\tilde{v}_n}(t_n))<\frac{1}{n},\quad d_n(g^{-1}_n p_n,\gamma_{\tilde{v}_n}(t_n))\leq \frac{2}{n}.
	\end{displaymath}

	Using Theorem~\ref{thm_3_1} the continuity argument, we have that

	\begin{equation}\label{eq_4_6}
		\gamma_{\tilde{u}}(+\infty)=\lim_{n\to\infty}\gamma_{\tilde{v}_n}(t_n)=\lim_{n\to\infty}g^{-1}_n x_n=\lim_{n\to\infty}g^{-1}_n p_n=\lim_{n\to\infty}\gamma_n(\omega_n).
	\end{equation}

	Similarly, we have that
	\begin{equation}\label{eq_4_7}
		\gamma_{\tilde{u}}(\text{--}\infty)=\lim_{n\to\infty}\gamma_n(-\omega_n).
	\end{equation}

	By passing to a sub-sequence if needed, denote that
	\begin{displaymath}
		\tilde{v}=\lim_{n\to\infty}\gamma'_n(0),\quad p=\lim_{n\to\infty}\gamma_n(0)=\gamma_{\tilde{v}}(0).
	\end{displaymath}

	Equation~\eqref{eq_4_6} and~\eqref{eq_4_7} implies that these two geodesics are bi-asymptotic.
	\begin{equation}\label{eq_4_8}
		\gamma_{\tilde{v}}(\pm\infty)=\gamma_{\tilde{u}}(\pm\infty).
	\end{equation}

	Because the rank of $\tilde{u}$ is $1$, $p\in\gamma_{\tilde{u}}$. Otherwise, bi-asymptote of the two geodesics $\gamma_{\tilde{u}}$ and $\gamma_{\tilde{v}}$ implies that the rank is greater than $1$, contradiction.

	As $p=\lim_{n\to\infty}p_n$. By the choice of $p_n$ we have $d(\gamma_{\tilde{u}}(0),p_n)=d(\gamma_{\tilde{u}}(0),\gamma_n)$. Theorem~\ref{thm_3_2} implies that the connecting geodesic $\gamma_n(0)$ can be chosen close to $\gamma_{\tilde{u}}$. Passing to a sub-sequence if needed, we can assume $d(p_n,x)\leq\frac{1}{n}$. Therefore $p=\gamma_{\tilde{u}}(0)=x$.

	Without loss of generality, assume $t_n>n$ and $\omega_n>n$,
	\begin{displaymath}
		\begin{aligned}
			d(\gamma_{\tilde{v}_n}(0),\gamma_n(0))&=d(x_n,p_n)\leq d(x_n,x)+d(x,p_n)\leq\frac{2}{n},\\
			d(\gamma_{\tilde{v}_n}(t_n),\gamma_n(\omega_n))&\leq d(\gamma_{\tilde{v}_n}(t_n),g^{-1}_n\gamma_{\tilde{v}_n}(0))+d(g^{-1}_n\gamma_{\tilde{v}_n}(0),g^{-1}_n\gamma_n(0))\leq\frac{3}{n}.
		\end{aligned}
	\end{displaymath}

	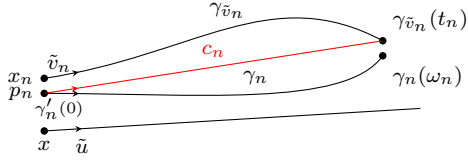
\begin{figure}[htbp]
		\centering
		\scalebox{1}{
			\begin{tikzpicture}
				\coordinate [label=below:$\scriptstyle x$] (X) at (-2,-2);
				\node at (X) [circle, fill, inner sep=1pt]{};					\draw[directed] (X)--(3,-1.7) node [pos=0.1,below] {$\scriptstyle \tilde{u}$};
				\coordinate [label=left:$\scriptstyle p_n$] (P) at (-2,-1.5);
				\node at (P) [circle, fill, inner sep=1pt]{};
				\coordinate [label=left:$\scriptstyle x_n$] (O) at (-2,-1.3);
				\node at (O) [circle, fill, inner sep=1pt]{};
				\coordinate [label=below right:$\scriptstyle \gamma_{n}(\omega_n)$] (U) at (2.5, -1);
				\node at (U) [circle, fill, inner sep=1pt]{};
				\coordinate [label=above right:$\scriptstyle \gamma_{\tilde{v}_n}(t_n)$] (V) at (2.5, -0.8);
				\node at (V) [circle, fill, inner sep=1pt]{};

				\draw [red, directed] (P)--(V) node [midway, above] {$\scriptstyle c_n$};

				\draw[directed] (O).. controls (0, -1) and (1,0).. (V) node [midway, above] {$\scriptstyle \gamma_{\tilde{v}_n}$};
				\node at (-1.8,-1.1) {$\scriptstyle\tilde{v}_n$};
				\draw[directed] (P).. controls (0,-1.5) and (2,-1.7).. (U) node [midway, above] {$\scriptstyle \gamma_n$};
				\node at (-1.8,-1.7) {$\scriptscriptstyle\gamma'_n(0)$};
			\end{tikzpicture}
		}
		\caption{Non-Shadowing}\label{fig6}
	\end{figure}

	Let $c_n=\gamma_{p_n,\gamma_{\tilde{v}_n}(t_n)}$ denote the connecting geodesic and $s_n$ denote the length of this segment. By triangular inequality,
	\begin{displaymath}
		|s_n-\omega_n|\leq\frac{3}{n},\quad |s_n-t_n|\leq\frac{2}{n}.
	\end{displaymath}

	In Equation~\eqref{eq_4_9}, assume there exists $t_{n,0}\in[0,\min{(t_n,\omega_n)}]$ such that $d(\gamma_n(t_{n,0}),\gamma_{\tilde{v}_n}(t_{n,0}))>\epsilon$. Then $d(c_n(t_{n,0}),\gamma_n(t_{n,0}))+d(c_n(t_{n,0}),\gamma_{\tilde{v}_n}(t_{n,0}))>\epsilon$. Without loss of generality, assume that
	\begin{displaymath}
		d(c_n(t_{n,0}),\gamma_n(t_{n,0}))>\frac{\epsilon}{2}.
	\end{displaymath}

	\begin{center}
		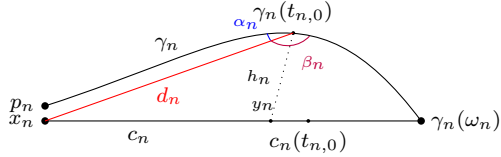
\begin{figure}[htbp]
			\centering
			\scalebox{1}{
				\begin{tikzpicture}
					\coordinate [label=left:$\scriptstyle x_n$] (X) at (-1.5,0);
					\node at (X) [circle, fill, inner sep=1pt] {};
					\coordinate [label=left:$\scriptstyle p_n$] (P) at (-1.5,0.2);
					\node at (P) [circle, fill, inner sep=1pt] {};
					\coordinate [label=right:$\scriptstyle \gamma_n(\omega_n)$] (Q) at (3.5,0);
					\node at (Q) [circle, fill, inner sep=1pt] {};
					\draw (X)--(Q) node [near start, below] {$\scriptstyle c_n$};
					\coordinate [label=below:$\scriptstyle c_n(t_{n,0})$] (A) at (2,0);
					\node at (A) [circle, fill, inner sep=0.5pt] {};
					\draw (P).. controls (1,1) and (2,2).. (Q) node [near start, above] {$\scriptstyle \gamma_n$};
					\coordinate [label=above:$\scriptstyle \gamma_n(t_{n,0})$] (B) at (1.8,1.17);
					\node at (B) [circle, fill, inner sep=0.5pt] {};
					\draw[red] (X)--(B) node [midway, below] {$\scriptstyle d_n$};
					\coordinate (C) at (1.5,1.07);
					\draw [blue] (C) arc (215:198:0.36) node [near start, above left] {$\scriptscriptstyle \alpha_n$};
					\draw [purple] (C) arc (236:317:0.4) node [midway, below right] {$\scriptscriptstyle \beta_n$};
					\node at (1.5,0) [circle, fill, inner sep=0.5pt] {};
					\node at (1.4,0.2) {$\scriptscriptstyle y_n$};
					\draw [dotted] (1.5,0)--(B) node [midway, left] {$\scriptscriptstyle h_n$};
				\end{tikzpicture}
			}
			\caption{$\epsilon$-Separation}\label{fig7}
		\end{figure}
	\end{center}

	Let $d_n$ denote the geodesic segment connecting $x_n$ and $q_n=\gamma_n(t_{n,0})$. $\alpha_n$ and $\beta_n$ be the angle between geodesic as labeled in Figure~\ref{fig7}. Choose $y_n\in c_n$ such that
	\begin{displaymath}
		d(q_n,y_n)=d(q_n,c_n)=h_n.
	\end{displaymath}

	By triangular inequality, we have that
	\begin{equation}
		2t_{n,0}+d(p_n,x_n)\geq d(q_n,c_n(t_{n,0}))>\frac{\epsilon}{2}.
	\end{equation}

	Since $d(p_n,x_n)<\frac{1}{n}$. When $n$ is sufficiently large, we have that
	\begin{equation}
		t_{n,0}>\frac{\epsilon}{4}-\frac{1}{n}>\frac{\epsilon}{8}.
	\end{equation}

	The length of geodesic segment between $p_n$ and $q_n$ is $t_{n,0}$. The difference of length between this segment and the geodesic segment $d_n$ between $x_n$ and $\gamma_n(t_{n,0})$ is at most $\frac{2}{n}$ by triangular inequality.

	Ruggiero~\cite{Ru1} proved that there exists a constant $C$ only depends on the manifold $M$, such that the distance between the geodesic segment is controlled by the angle of the two geodesics $d(\gamma_{[q_n,p_n]},\gamma_{[q_n,x_n]})\geq C\alpha_n t_{n,0}$. Thus
	\begin{equation}
		\alpha_n\leq \frac{1}{Ct_{n,0}}\frac{2}{n}\leq\frac{32}{Cn\epsilon}.
	\end{equation}

	Here $\epsilon$ is some fixed number through the proof. Then $\alpha_n\to0$ as $n\to\infty$. And $\alpha_n+\beta_n=\pi$, leading to that $\beta_n\to\pi$.

	Because $M$ admits uniform visibility, $\beta_n\to\pi$ means that the distance between $q$ and $c_n$ goes to $0$. So $h_n\to0$. Now consider the distance between $y_n$ and $c_n(t_{n,0})$. We have that
	\begin{equation}
		\begin{aligned}
			d(y_n,c_n(t_{n,0}))&=t_{n,0}-d(x_n,y_n)\\
							   &\leq t_{n,0}-d(x_n,q)+h_n\\
							   &\leq \frac{2}{n}+h_n.
		\end{aligned}
	\end{equation}

We now deduce a contradiction for the fixed $\epsilon$.
\begin{displaymath}
	\frac{\epsilon}{2}<d(q_n,c_n(t_{n,0}))\leq d(y_n,c_n(t_{n,0}))+h_n\leq \frac{2}{n}+2h_n\to 0.
\end{displaymath}

This completes the proof.
\end{proof}

A straightforward consequence of this theorem is the Anosov Closing Lemma restricted to $\Omega_1$ on $T^1\tilde{M}$.

\begin{theorem}[Anosov Closing Lemma on $\Omega_1$]\label{thm_4_3}
	Let $M$ be the manifold as stated in Theorem~\ref{thm_4_2}, then Anosov Closing Lemma holds to the restriction to $\Omega_1$.

	That is to say, given any $\epsilon>0$, there exist $\delta>0$ and $T>0$, such that for any $v\in\Omega_1$ and $t>T$ with $d_1(v,\phi_t(v))<\delta$, we can find a rank $1$ periodic unit vector $v_0\in\Omega_1$ and $t_0$ with $|t_0-t|<\epsilon$, $\phi_{t_0}(v_0)=v_0$ and $d_1(\phi_s(v_0),\phi_s(v))<\epsilon$ for any $0<s<\min{\{t,t_0\}}$.
\end{theorem}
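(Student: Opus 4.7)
The plan is to deduce Theorem~\ref{thm_4_3} from the local version in Theorem~\ref{thm_4_2} by a covering/compactness argument. Theorem~\ref{thm_4_2} attaches to each rank $1$ unit vector $u$ an open neighborhood $U_u\ni u$ on which, for every $\epsilon>0$, there are local closing parameters $\delta_u(\epsilon)$ and $T_u(\epsilon)$ making the conclusion hold for vectors in $U_u$. The task is to upgrade these pointwise data to constants $\delta,T$ that serve all of $\Omega_1$ at once.

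First I would cover $\Omega_1$ by the neighborhoods $\{U_u\}_{u\in\Omega_1}$ provided by Theorem~\ref{thm_4_2} and, by compactness, extract a finite subcover $U_{u_1},\dots,U_{u_N}$. Setting
\begin{displaymath}
    \delta:=\min_{1\leq i\leq N}\delta_{u_i}(\epsilon),\qquad T:=\max_{1\leq i\leq N}T_{u_i}(\epsilon),
\end{displaymath}
I claim these are the desired uniform constants. Indeed, given $v\in\Omega_1$ and $t>T$ with $d_1(v,\phi_t(v))<\delta$, pick the index $i$ with $v\in U_{u_i}$ and apply Theorem~\ref{thm_4_2} inside $U_{u_i}$: this produces a rank $1$ unit vector $v_0$ with a period $t_0$ satisfying $|t_0-t|<\epsilon$, $\phi_{t_0}(v_0)=v_0$, and $d_1(\phi_s(v_0),\phi_s(v))<\epsilon$ for $0<s<\min\{t,t_0\}$.

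It remains to verify that $v_0$ is not merely rank $1$ and periodic but actually belongs to $\Omega_1$. This is immediate from Proposition~\ref{thm_3_3}: $v_0$ is periodic, hence recurrent, and being rank $1$ it therefore lies in $\Omega^{\text{rec}}_1\subseteq\Omega_1$, so its stable (resp.\ unstable) manifold coincides with the stable (resp.\ unstable) horocycle as required by the definition of $\Omega_1$.

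The step I expect to be the main obstacle is the compactness argument, since the ambient manifold $M$ is not assumed compact and $\Omega_1$ need not be compact in general. The natural remedy is to restrict attention to a suitable compact piece of $\Omega_1$ (for instance a compact subset containing $v$ and the putative $v_0$, obtained from a fundamental domain on the universal cover together with the $\Gamma$-equivariance and uniform visibility used in the proof of Theorem~\ref{thm_4_2}), apply the finite-subcover argument there, and check that both $v$ and the closing orbit $v_0$ stay inside that compact piece. Once this is arranged, the closing lemma on $\Omega_1$ follows from a routine splicing of Theorem~\ref{thm_4_2} across the finite cover.
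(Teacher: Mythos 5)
The paper's own proof of Theorem~\ref{thm_4_3} is extremely terse: it only records that the rank $1$ vectors form an open set, that Theorem~\ref{thm_4_2} gives a rank $1$ periodic shadowing vector $v_0$, and then concludes $v_0\in\Omega_1$ via Proposition~\ref{thm_3_3} (periodic $\Rightarrow$ recurrent $\Rightarrow$ $\Omega^{\text{rec}}_1\subseteq\Omega_1$). You reproduce that last step verbatim, so the core of the deduction agrees with the paper.

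Where you diverge is that you (correctly) notice the statement of Theorem~\ref{thm_4_3} demands constants $\delta, T$ that are uniform over all of $\Omega_1$, whereas Theorem~\ref{thm_4_2} only yields constants $\delta_u(\epsilon), T_u(\epsilon)$ depending on the base vector $u$ (its proof is a local contradiction argument built around a fixed $u$ and a sequence $v_n\to u$). Your plan to pass to uniformity via a finite subcover of $\Omega_1$ and take $\delta:=\min_i\delta_{u_i}$, $T:=\max_i T_{u_i}$ is the standard repair, but as you yourself flag, it needs $\Omega_1$ (or at least the set of $v$ under consideration) to be relatively compact. Since the paper explicitly allows non-compact $M$, $\Omega_1$ need not be compact, and the ``restrict to a suitable compact piece'' remedy you sketch is not carried out; in particular, there is no a priori reason why the closing orbit $v_0$ supplied by Theorem~\ref{thm_4_2} should stay inside a fixed compact set as $v$ ranges over $\Omega_1$. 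This is a genuine gap in your argument as written. It is worth noting, however, that the paper's own proof does not address uniformity at all --- it silently applies Theorem~\ref{thm_4_2} pointwise --- so either the intended reading of Theorem~\ref{thm_4_3} is with $\delta,T$ allowed to depend on $v$, or there is an implicit compactness hypothesis (as in Lemma~\ref{lem_3_0}, which is stated for compact $M$) that is not spelled out. You have in fact spotted a real imprecision in the paper, but your proposed fix does not yet close it.
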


\begin{proof}
	Notice that the rank $1$ unit vectors form an open set in $T^1 M$. From Theorem~\ref{thm_4_2}, we know that the rank $1$ vector $v$ can be shadowed by a rank $1$ periodic vector $v_0$. And $v_0$ is periodic, thus recurrent. Proposition~\ref{thm_3_3} implies that $v_0$ is contained in $\Omega_1$.
\end{proof}

\begin{remark}
	We can easily adapt our argument to the restriction of $\Omega_{\text{NF}}$ with an extra assumption that $\Omega_{\text{NF}}$ is relatively open in $\Omega$. The proof is similar to the proof given here. Thus, we can claim that the Anosov Closing Lemma holds to the restriction of $\Omega_{\text{NF}}$.
\end{remark}

\section{\bf Topological Transitivity}\label{sec5}
In this part, we will prove the transitivity property of the geodesic flow to the restriction to $\Omega_1$.

\begin{theorem}[Topological Transitivity]\label{thm_5_0}
	Let $M$ be a connected, complete manifolds with no conjugate points. Suppose $M$ satisfies bounded asymptote and uniform visibility, and the geodesic flows has at least three periodic orbits in $\Omega_1$. Then the geodesic flow restricted to $\Omega_1$ is topological transitive.
\end{theorem}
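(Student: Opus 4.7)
To prove topological transitivity of $\phi_t$ on $\Omega_1$, it suffices to show that for any two nonempty open sets $U, V \subseteq \Omega_1$ there is some $t > 0$ with $\phi_t(U) \cap V \neq \emptyset$. My plan follows the classical Eberlein--Ballmann strategy: construct a heteroclinic orbit between two rank $1$ periodic orbits, one passing through $U$ and the other through $V$, and then use shadowing to conclude.

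\textbf{Step 1 (Approximation by periodic orbits).} Pick $u \in U$ and $v \in V$. Both are rank $1$ and non-wandering, so by the Anosov Closing Lemma on $\Omega_1$ (Theorem~\ref{thm_4_3}) we can find rank $1$ periodic vectors $p_u, p_v$ with $p_u \in U$ and $p_v \in V$ after choosing the closing parameters small enough.

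\textbf{Step 2 (Choosing a deck transformation).} Lift $p_u, p_v$ to $\tilde p_u, \tilde p_v \in T^1\tilde M$ with axis endpoints $a^\pm = \gamma_{\tilde p_u}(\pm\infty)$ and $b^\pm = \gamma_{\tilde p_v}(\pm\infty)$ in $\partial\tilde M$. I claim there exists $g \in \Gamma$ with $g \cdot b^+ \neq a^-$. If no such $g$ existed, the entire $\Gamma$-orbit of $b^+$ would collapse to the single point $a^-$, forcing $\Gamma$ to globally fix $a^-$ at infinity. However, the three given rank $1$ periodic orbits in $\Omega_1$ correspond to three distinct axial isometries in $\Gamma$, each fixing two points at infinity, and a standard ping-pong/pigeonhole argument on these six fixed points rules out the existence of a common global fixed boundary point.

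\textbf{Step 3 (Constructing the heteroclinic).} With $a^- \neq g \cdot b^+$, the uniform visibility axiom (Theorem~\ref{thm_2_1}) provides a geodesic $\tilde\gamma$ in $\tilde M$ with $\tilde\gamma(-\infty) = a^-$ and $\tilde\gamma(+\infty) = g \cdot b^+$. Sharing endpoints at infinity forces asymptotic behavior, and Lemma~\ref{lem_3_0}(1) gives the quantitative control: for suitable parametrizations, $d(\tilde\gamma(t), \gamma_{\tilde p_u}(t)) \to 0$ as $t \to -\infty$ and $d(\tilde\gamma(t), g \cdot \gamma_{\tilde p_v}(t)) \to 0$ as $t \to +\infty$. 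Letting $w^* \in T^1M$ denote the projection of $\tilde\gamma'(0)$ to the base, the orbit $\{\phi_t(w^*)\}$ shadows the periodic orbit of $p_u$ for $t \ll 0$ and that of $p_v$ for $t \gg 0$.

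\textbf{Step 4 (Conclusion).} Choose $s_1 \ll 0 < s_2$ so that $\phi_{s_1}(w^*)$ is close enough to $p_u$ to lie in $U$ and $\phi_{s_2}(w^*)$ lies in $V$; then $\phi_{s_2 - s_1}$ carries a vector in $U$ into $V$. To ensure that the starting vector genuinely lies in $\Omega_1$ (not merely in the open set of rank $1$ vectors), I reapply Theorem~\ref{thm_4_3} to the near-periodic segment of this shadowing orbit near $p_u$, producing a rank $1$ periodic vector in $U$ whose forward orbit still enters $V$; by Proposition~\ref{thm_3_3} any such periodic vector lies in $\Omega_1$, completing the argument.

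\textbf{Main obstacle.} The hardest step is Step 2: turning the combinatorial hypothesis of three rank $1$ periodic orbits into the algebraic statement that $\Gamma$ admits no global fixed point at infinity, so that a good $g \in \Gamma$ can be selected. This requires a careful ping-pong argument using the axial isometries and their attracting/repelling dynamics on $\partial\tilde M$, leveraging the fact that the six fixed points arising from three distinct rank $1$ axes cannot all be trapped by a single boundary point under the $\Gamma$-action. A secondary technical concern is ensuring the constructed vectors satisfy the stable/unstable-equals-horocycle condition defining $\Omega_1$, which I expect to handle via the invariance of $\Omega_1$ under $\phi_t$ together with Proposition~\ref{thm_3_3}.
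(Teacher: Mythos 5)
Your overall architecture (periodic approximation, heteroclinic connection via visibility, shadowing) matches the paper's, but there is a genuine gap at the crucial point: you never establish that the connecting vector $w^*$ lies in $\Omega_1$. Since $U$ and $V$ are only relatively open in $\Omega_1$, a vector that is merely $d_1$-close to $p_u$ in $T^1M$ need not belong to $U$; you must show $\phi_{s_1}(w^*)\in\Omega_1$, i.e.\ that it is rank $1$, non-wandering, and has coinciding strong stable/unstable manifolds and horocycles. There are two sub-gaps. First, Theorem~\ref{thm_2_1} only produces \emph{some} geodesic joining $a^-$ to $g\cdot b^+$; nothing guarantees it is rank $1$. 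The paper gets a rank $1$ connecting geodesic from Lemma~\ref{lem_5_4}, which combines Theorem~\ref{thm_3_2} (rank $1$ connections for boundary pairs near the endpoints of a rank $1$ geodesic) with the north--south contraction of the axial isometry on $\partial\tilde M$ (Lemma~\ref{lem_5_3}) to push an arbitrary $\eta\neq\xi$ into the good neighborhood $U_\epsilon$. Second, and more seriously, a one-way heteroclinic is not obviously non-wandering, and your proposed repair --- reapplying Theorem~\ref{thm_4_3} to ``the near-periodic segment near $p_u$'' --- does not work: the Anosov Closing Lemma closes an almost-closed orbit segment into a periodic orbit that shadows \emph{that segment only}; the resulting periodic vector stays near the orbit of $p_u$ forever and does not follow your heteroclinic out to $V$.

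The paper closes this gap with an extra construction you are missing: it builds \emph{two} heteroclinics, $v_3$ from $\gamma_{v_1}(-\infty)$ to $\gamma_{v_2}(+\infty)$ and $v_4$ from $\gamma_{v_2}(-\infty)$ to $\gamma_{v_1}(+\infty)$, and then applies the local product structure (Theorem~\ref{thm_4_1}) near suitable translates $dg_2^{-n}\tilde v_3$ and $dg_2^{m}\tilde v_4$ to glue orbit segments into a single orbit that starts near $v_3$, passes near $v_2$ and $v_1$, and returns near $v_3$. This exhibits $v_3$ as non-wandering; combined with the bi-asymptotics to the two periodic (hence recurrent) orbits, which force the horocycle/strong-manifold coincidence via Proposition~\ref{thm_3_3}, it places $v_3\in\Omega_1$, and the orbit of $v_3$ itself then realizes $\phi_t(U_1)\cap U_2\neq\emptyset$. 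Your Step 2 ping-pong discussion is also heavier than needed: the only degenerate case is when $v_1$ and $v_2$ generate opposite orbits, and the paper disposes of it directly by routing through the hypothesized third periodic orbit. If you add the rank $1$ connection lemma and the double-heteroclinic gluing argument, your outline essentially becomes the paper's proof.
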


Some results are needed in preparation to prove this theorem. We start with the case on surfaces.

\begin{lemma}\label{lem_5_1}
	Let $M$ be a Riemannian surface with no conjugate points and genus no less than $2$. Let $\tilde{M}$ denote the universal cover of $M$.  Suppose $\gamma$ is the axis of the axial isometry $g\in\Gamma\subseteq \textrm{Iso}(\tilde{M})$, $\gamma(0)=p$. $x\in\partial\tilde{M}-{\gamma(+\infty),\gamma(\text{--}\infty)}$,then $gx$ lies between $x$ and $g^2x$.
\end{lemma}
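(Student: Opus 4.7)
The plan is to exploit uniform visibility (Theorem~\ref{thm_2_2}), under which $\partial\tilde M$ is homeomorphic to $S^1$. The axial isometry $g$ satisfies $g\gamma(s)=\gamma(s+\omega)$ for some $\omega>0$, so it fixes the two boundary points $\gamma(\pm\infty)$. These split $\partial\tilde M$ into two open arcs $I^+,I^-$, each preserved by the orientation-preserving homeomorphism $g|_{\partial\tilde M}$. Without loss of generality $x$ lies in one of them, call it $I$. If I can show that $g|_{\overline I}$ has no interior fixed point and pushes points toward $\gamma(+\infty)$, then, since $g|_{\overline I}$ is a self-homeomorphism of a closed interval fixing both endpoints, strict monotonicity toward $\gamma(+\infty)$ follows. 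Applied twice, this gives $x<gx<g^2x$ in the order on $\overline I$ running from $\gamma(\text{--}\infty)$ to $\gamma(+\infty)$, so $gx$ lies between $x$ and $g^2x$, which is the conclusion.

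The crux is therefore the north--south behavior $g^n\xi\to\gamma(+\infty)$ for every $\xi\neq\gamma(\text{--}\infty)$. Fix the basepoint $p=\gamma(0)$ and, given such a $\xi$, consider the unique connecting geodesic $\beta=\gamma_{p,\xi}$ provided by Lemma~\ref{lem_3_0}(3). Applying $g^n$, the image $g^n\beta$ is the geodesic from $g^np=\gamma(n\omega)$ to $g^n\xi$, meeting $\gamma$ at $\gamma(n\omega)$ at the same angle $\theta_0<\pi$ that $\beta$ makes with $\gamma$ at $p$ (with $\theta_0<\pi$ precisely because $\xi\neq\gamma(\text{--}\infty)$).

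The key estimate I would then establish is $d(p,g^n\beta)\to\infty$. Using $d(g^n\beta(t),p)=d(\beta(t),\gamma(-n\omega))$, the point is that $\inf_{t\ge 0}d(\beta(t),\gamma(-n\omega))$ must grow with $n$; otherwise a subsequence of points on $\beta$ would accumulate in the cone topology at $\gamma(\text{--}\infty)$, which is impossible since the only boundary accumulation point of the ray $\beta$ is $\xi\neq\gamma(\text{--}\infty)$. Once $d(p,g^n\beta)\to\infty$, uniform visibility forces $\angle_p(\gamma(n\omega),g^n\xi)\to 0$, and combined with $\gamma(n\omega)\to\gamma(+\infty)$ in the cone topology this yields $g^n\xi\to\gamma(+\infty)$, as wanted.

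I expect this distance estimate to be the main obstacle: in the non-positively curved or no-focal-points settings it would be immediate from convexity of distance, while here we can only argue by contradiction using the divergence of geodesics implied by Lemma~\ref{lem_3_0} together with the cone-topology continuity at infinity developed in Theorem~\ref{thm_3_1}. Once north--south dynamics is secured, the topological monotonicity argument on $\overline I$ is immediate and the lemma follows.
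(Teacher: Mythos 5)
Your proposal takes a genuinely different route from the paper. The paper argues by contradiction with a planar geodesic-intersection argument: if $gx$ and $g^2x$ sat on opposite sides of $x$ along the boundary arc, then the rays $l_0=\gamma_{p,x}$, $l_1=gl_0$, $l_2=g^2l_0$ would satisfy $l_0\cap l_1\neq\emptyset$ but $l_1\cap l_2=\emptyset$, contradicting $l_1\cap l_2=g(l_0\cap l_1)\neq\emptyset$. This uses only uniqueness of geodesics in $\tilde M$ and Jordan-curve planarity in dimension two; it does not invoke visibility at all. Your route via boundary-circle dynamics is the one the authors allude to in their remark after the lemma (a circle homeomorphism is monotone between its fixed points), so your instinct is sound, but you then over-engineer it considerably.

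The over-engineering is the real issue. Once you know $g|_{\partial\tilde M}$ is an orientation-preserving circle homeomorphism fixing $\gamma(\pm\infty)$ and hence restricts to an increasing self-homeomorphism of each closed complementary arc, the conclusion is immediate: if $gx>x$ in the interval order then applying $g$ gives $g^2x>gx$, if $gx<x$ then $g^2x<gx$, and if $gx=x$ the statement is degenerate. Neither the absence of interior fixed points nor the direction of the push (toward $\gamma(+\infty)$ vs.\ $\gamma(\text{--}\infty)$) is needed for betweenness, so the entire north--south dynamics argument is superfluous. Worse, proving $g^n\xi\to\gamma(+\infty)$ for all $\xi\neq\gamma(\text{--}\infty)$ is essentially the content of Lemma~\ref{lem_5_2}, which the paper deduces \emph{from} Lemma~\ref{lem_5_1}; using it as a stepping stone here risks circularity and would require you to independently develop the uniform-visibility machinery that the paper builds for the stronger Lemma~\ref{lem_5_3}. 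The distance estimate $d(p,g^n\beta)\to\infty$ that you flag as the main obstacle really is the hard part in this setting, and it is simply not needed. Your first two sentences already contain a complete (if slightly different) proof; everything after is a costly detour.
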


\begin{proof}
	Take $p=\gamma(0)$. As $\gamma$ is the axis of $g$, which acts as translation on $\gamma$. Pick a point $x$ on the boundary and consider the image under $g$ and $g^2$.

	Assume the argument fails. Because the ideal boundary is homeomorphic to the circle $\mathbb{S}^1$, there are only two possibilities: $gx$ above $x$ while $g^2x$ below it, or $gx$ below $x$ while $g^2x$ above it. Without loss of generality, consider the latter case as showing in Figure~\ref{fig8}.

	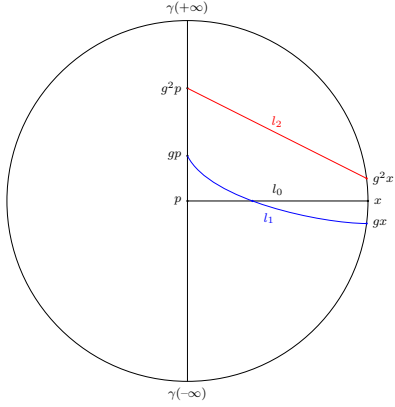
\begin{figure}[htbp]
		\centering
		\scalebox{0.6}{
			\begin{tikzpicture}
				\coordinate [label=left:$\scriptstyle p$] (O) at (0,0);
				\draw (0,0)	circle (4);
				\node at (O) [circle, fill, inner sep =0.5pt]{};
				\coordinate [label=left:$\scriptstyle gp$] (A) at (0,1);
				\coordinate [label=left:$\scriptstyle g^2p$] (B) at (0,2.5);
				\coordinate [label=below:$\scriptstyle \gamma(\text{--}\infty)$] (S) at (0,-4);
				\coordinate [label=above:$\scriptstyle \gamma(+\infty)$] (E) at (0,4);
				\node at (A) [circle, fill, inner sep=0.5pt] {};
				\node at (B) [circle, fill, inner sep=0.5pt] {};
				\draw (S)--(E);

				\coordinate [label=right:$\scriptstyle x$] (P) at (4,0);
				\node at (P) [circle, fill, inner sep=0.5pt]{};
				\coordinate [blue, label=right:$\scriptstyle gx$] (Q) at (3.97,-0.5);
				\node at (Q) [blue, circle, fill, inner sep=0.5pt]{};
				\coordinate [red, label=right:$\scriptstyle g^2x$] (R) at (3.97,0.5);
				\node at (R) [red, circle, fill, inner sep=0.5pt]{};
				\draw (O)--(P) node [midway, above] {$\scriptstyle l_0$};
				\draw [blue] (A).. controls (0.5,0) and (3,-0.5).. (Q) node [midway, below,blue] {$\scriptstyle l_1$};
				\draw [red] (B)--(R) node [midway, above, red] {$\scriptstyle l_2$};
				\node at (1.44,0) [blue, circle, fill, inner sep=0.5pt]{};
			\end{tikzpicture}
		}
		\caption{Order on the Boundary of Surfaces}\label{fig8}
	\end{figure}

	On the surface, due to the limitation of the dimension, we have that $l_0\cap l_1\neq\emptyset$ but $l_2\cap l_0=\emptyset$, $l_1\cap l_2=\emptyset$. On the other hand, $l_2\cap l_1=gl_1\cap gl_0=g(l_1\cap l_0)\neq\emptyset$, contradiction. $gx$ must lie between $x$ and $g^2x$.
\end{proof}

\begin{remark}
	Actually $\gamma(\pm\infty)$ are the only two fixed points of $g$. We know that $gx\neq x$ for $x\neq\gamma(\pm\infty)$. Thus, the sequence of points $\{g^k x{\}}_{k=\text{--}\infty}^\infty$ has an order on the circle. This lemma is a well-known result from dynamics that homeomorphism on $\mathbb{S}^1$ is either keeping the direction or reverse the direction between fixed points.
\end{remark}

\begin{lemma}[Surfaces]\label{lem_5_2}
	Let $M$ be a Riemannian surface with no conjugate points, bounded asymptote and the genus no less than $2$. $\tilde{M}$ is the universal cover of $M$. Suppose $\gamma$ is a rank $1$ axis geodesic on $\tilde{M}$ of axial isometry $g\in\Gamma\subset \textrm{Iso}(\tilde{M})$, for any open neighborhood $U\ni \gamma(\text{--}\infty)$ and $V\ni \gamma(+\infty)$ in $\partial\tilde{M}$, there exists $n_0\in\mathbb{Z}^+$ such that
	\begin{displaymath}
		g^n(\partial\tilde{M}-U)\subseteq V, \quad g^{-n}(\partial\tilde{M}-V)\subseteq U,\quad\forall n\geq n_0.
	\end{displaymath}
\end{lemma}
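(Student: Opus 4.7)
My strategy is to analyze the action of $g$ on the boundary circle $\partial\tilde{M}\simeq\mathbb{S}^1$ and show it has source-sink dynamics with attractor $\gamma(+\infty)$ and repeller $\gamma(-\infty)$; once this is in hand, both containments in the lemma reduce to elementary one-dimensional dynamics on the circle. By the remark following Lemma~\ref{lem_5_1} the only fixed points of $g|_{\partial\tilde{M}}$ are $\gamma(\pm\infty)$, and iterating Lemma~\ref{lem_5_1} shows that for every $x\in\partial\tilde{M}\setminus\{\gamma(\pm\infty)\}$ the orbit $\{g^nx\}_{n\in\mathbb{Z}}$ is strictly monotone on $\mathbb{S}^1$, so $\lim_{n\to+\infty}g^nx$ exists and equals $\gamma(+\infty)$ or $\gamma(-\infty)$. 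The main task is ruling out the latter.

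To pin down the attractor, let $p=\gamma(0)$ and let $\omega>0$ be the translation length of $g$ along $\gamma$, so $g^np=\gamma(n\omega)$. For any $q\in\tilde{M}$ the identity $d(g^np,g^nq)=d(p,q)$ gives $|d(p,g^nq)-n\omega|\le d(p,q)$, hence $d(p,g^nq)\to\infty$. Writing $\alpha_n$ for the angle at $p$ between $\gamma'(0)$ and the initial direction of $\gamma_{p,g^nq}$, Lemma~\ref{lem_3_0}(2) combined with the triangle inequality yields
\begin{displaymath}
C\alpha_n d(p,g^nq)\le d\bigl(\gamma(d(p,g^nq)),g^nq\bigr)\le 2d(p,q),
\end{displaymath}
so $\alpha_n\to 0$ and $g^nq\to\gamma(+\infty)$ in the cone topology. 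For a boundary point $x\ne\gamma(\pm\infty)$ I would use Theorem~\ref{thm_2_1} to choose a geodesic $\sigma$ with $\sigma(-\infty)=\gamma(-\infty)$, $\sigma(+\infty)=x$, and set $q=\sigma(0)\in\tilde{M}$. Since $g$ fixes $\gamma(-\infty)$, the geodesic $g^n\sigma$ joins $\gamma(-\infty)$ to $g^nx$ and passes through $g^nq$. If one assumed $g^nx\to\gamma(-\infty)$, then both endpoints of $g^n\sigma$ would converge to $\gamma(-\infty)$, so the divergent property of geodesics on manifolds of bounded asymptote (\cite{Ru1}, already invoked in the proof of Lemma~\ref{lem_3_2}) forces $d(p,g^n\sigma)\to\infty$, after which uniform visibility (Theorem~\ref{thm_2_2}) traps $g^n\sigma$ inside an arbitrarily narrow cone at $p$ around $\gamma(-\infty)$. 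But $g^nq\in g^n\sigma$ is eventually in an equally narrow cone around the antipodal direction $\gamma(+\infty)$, a contradiction; hence $g^nx\to\gamma(+\infty)$.

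Uniform convergence on the compact set $K=\partial\tilde{M}\setminus U$ then follows from standard source-sink dynamics on $\mathbb{S}^1$: after shrinking $U$ if necessary to a small enough arc neighborhood of $\gamma(-\infty)$ so that $g(K)\subseteq K$ (possible because $\gamma(-\infty)$ is the repeller), the nested forward images $g^nK$ are a decreasing sequence of compact sets whose intersection is $g$-invariant, closed, and misses $\gamma(-\infty)$, hence equals $\{\gamma(+\infty)\}$; by compactness $g^nK\subseteq V$ for all sufficiently large $n$. The companion statement $g^{-n}(\partial\tilde{M}\setminus V)\subseteq U$ follows by applying the entire argument to $g^{-1}$, whose axis is $\gamma$ reparametrised in the reverse direction so that $\gamma(-\infty)$ becomes its attracting fixed point.

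The main obstacle is the contradiction step for boundary points: ruling out $g^nx\to\gamma(-\infty)$ is precisely where the two extra geometric hypotheses enter essentially — bounded asymptote through Ruggiero's divergent property, and uniform visibility through Theorem~\ref{thm_2_2} — while the rank~$1$ hypothesis on $\gamma$ is used implicitly via the remark after Lemma~\ref{lem_5_1} to guarantee that $g|_{\partial\tilde{M}}$ has no fixed points other than $\gamma(\pm\infty)$.
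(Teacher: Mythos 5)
Your proof is correct and reaches the same conclusion, but it is organised around a genuinely different scheme from the paper's. You first establish pointwise source–sink dynamics on $\partial\tilde{M}$, and then upgrade to the uniform statement by a nested‑compact‑sets argument; the paper instead argues by direct contradiction, using the monotonicity from Lemma~\ref{lem_5_1} to manufacture a single ``witness'' $x$ whose entire forward $g$‑orbit stays outside $V_0$, passes to a limit $y\notin V_0$, and then contradicts uniform visibility.

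The technical ingredients you use in the key step are also different. To show that interior points satisfy $g^nq\to\gamma(+\infty)$ you run a quantitative angle estimate through Lemma~\ref{lem_3_0}(2) together with the triangle inequality; the paper never establishes this interior attraction, and instead at the corresponding point invokes a surface‑specific nested‑geodesics argument (geodesics emanating from $\gamma(\text{--}\infty)$ do not cross on a surface, so $\gamma'$ separates $\gamma$ from $g^{n_i}C$, giving $d(g^{n_i}p,g^{n_i}C)\geq d(g^{n_i}p,\gamma')\to\infty$ against the isometric invariance of the angle). Interestingly, your quantitative approach is structurally closer to the paper's own proof of the general‑dimensional Lemma~\ref{lem_5_3}, which likewise replaces the planar separation argument with a direct visibility/angle estimate — so your proof can be read as a hybrid, using Lemma~\ref{lem_5_1} only for monotonicity of orbits and otherwise running the Lemma~\ref{lem_5_3} machinery. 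The trade‑off is that the paper's approach is shorter here but does not generalise (hence a separate Lemma~\ref{lem_5_3} is needed), while yours is longer but conceptually uniform across dimensions.

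One small point to tighten: your claim that ``both endpoints of $g^n\sigma$ converging to $\gamma(-\infty)$, together with the divergent property, forces $d(p,g^n\sigma)\to\infty$'' is correct in spirit, but the divergent property as used in Lemma~\ref{lem_3_2} (uniqueness of geodesic rays from a fixed point to a boundary point) needs an intermediate step here: if $d(p,g^n\sigma)$ stayed bounded along a subsequence, you must extract a limiting geodesic via compactness, then use the continuity‑at‑infinity Theorem~\ref{thm_3_1} to see its two endpoints would coincide at $\gamma(-\infty)$, and only then invoke divergence of the two opposite rays from a point of the limiting geodesic. This is a routine limiting argument but should be made explicit. With that filled in, your proof is complete.
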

\begin{proof}
	Prove by contradiction. Assume not, we can find open neighborhood $U_0\ni\gamma(\text{--}\infty)$ and $V_0\ni\gamma(+\infty)$ in $\partial\tilde{M}$, and an increasing sequence $\{n_k{\}}_{k=1}^\infty\subset\mathbb{Z}^+$, such that $g^{n_k}(\partial\tilde{M}-U_0)\cap(\partial\tilde{M}-V_0)\neq\emptyset$.

	That is to say that $g^{n_k}$ does not send the complement of $U_0$ into $V_0$. We can find $\{x_k{\}}_{k=1}^\infty\subset\partial\tilde{M}-U_0$ satisfying that
	\begin{displaymath}
		g^{n_k}x_k\in\partial\tilde{M}-V_0.
	\end{displaymath}

	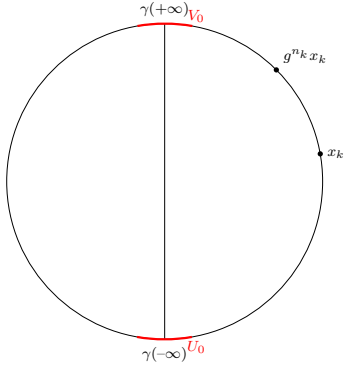
\begin{figure}[htbp]
		\centering
		\scalebox{0.7}{
			\begin{tikzpicture}
				\draw (0,0) circle (3);
				\coordinate[label=above:$\scriptstyle\gamma(+\infty)$] (E) at (0,3);
				\coordinate[label=below:$\scriptstyle\gamma(\text{--}\infty)$] (S) at (0,-3);
				\draw (S)--(E);
				\draw[very thick, red] (0.525,2.955) arc (80:100:3);
				\draw[very thick, red] (-0.525,-2.955) arc (260:280:3);
				\node at (0.6,3.15) [red] {$\scriptstyle V_0$};
				\node at (0.6,-3.15)[red] {$\scriptstyle U_0$};
				\coordinate [label=right:$\scriptstyle x_k$] (X) at (2.955,0.525);
				\node at (X) [fill, circle, inner sep=1pt]{};
				\coordinate [label=above right:$\scriptstyle g^{n_k}x_k$] (Y) at (2.121,2.121);
				\node at (Y) [fill, circle, inner sep=1pt]{};
			\end{tikzpicture}
		}
		\caption{Counter-Clockwise}\label{fig9}
	\end{figure}

	Without loss of generality, we can assume that  $\forall k\in\mathbb{Z}^+, g^{n_k}$ maps $x_k$ counter-clockwise as shown in Figure~\ref{fig9}. Lemma~\ref{lem_5_1} guarantees all $g^i x_k$ lies between $x_k$ and $g^{n_k}x_k$ for $i=1,2,\ldots n_k-1$. As we know that $n_k$ is an increasing sequence of integers, we can assume $n_k\geq k$. Therefore $\forall x$ lies between $x_k$ and $\gamma(\text{--}\infty)$ on the boundary, $g^k x$ must lies between $\gamma(-\infty)$ and $g^k x_k$, thus below $g^{n_k}x_k$.

	Consider the collection of all $x_k$. By the choice of $x_k\in \partial\tilde{M}-U_0$, $\gamma(\text{--}\infty)$ cannot be an accumulative point of $\{x_k\}$. Shrink $U_0$ if needed, we can find a point $x\in \partial\tilde{M}-U_0$ lies between $\gamma(-\infty)$ and all $x_k$ which is an accumulate point of $x_k$. This implies that $g^n x$ lies between $\gamma(-\infty)$ and $g^{n_k}x_k$, thus we have
	\begin{displaymath}
		g^k x\in\partial\tilde{M}-V_0,\qquad\forall n\in\mathbb{Z}^+.
	\end{displaymath}

	By passing to a sub-sequence if needed, we can assume that $\lim_{i\to\infty} g^{n_i}x=y\in\partial\tilde{M}-V_0$.

	\begin{figure}[htbp]
		\centering
		\scalebox{0.7}{
			\begin{tikzpicture}
				\draw (0,0) circle (4);
				\coordinate[label=above:$\scriptstyle\gamma(+\infty)$] (E) at (0,4);
				\coordinate[label=below:$\scriptstyle\gamma(\text{--}\infty)$] (S) at (0,-4);
				\draw (S)--(E);
				\draw[very thick, red] (0.7,3.94) arc (80:100:4);
				\draw[very thick, red] (-0.7,-3.94) arc (260:280:4);
				\node at (0.7,4.2) [red] {$\scriptstyle V_0$};
				\node at (0.7,-4.2)[red] {$\scriptstyle U_0$};
				\coordinate [label=right:$\scriptstyle y$] (Y) at (2.828,2.828);
				\node at (Y) [circle, fill, inner sep=1pt] {};
				\coordinate [label=right:$\scriptstyle x$] (X) at (2.828,-2.828);
				\node at (X) [circle, fill, inner sep=1pt] {};

				\coordinate [label=right:$\scriptstyle g^{n_i}x$] (Z) at (4,0);
				\node at (Z) [circle, fill, inner sep=1pt] {};

				\draw [directed, red] (S).. controls (0,-2) and (2,1).. (Y) node [midway, above] {$\scriptstyle \gamma'$};

				\draw [directed] (S).. controls (0,-2.5) and (2,0).. (Z) node [midway, below] {$\scriptstyle \qquad g^{n_i}C$};
				\draw [directed] (S).. controls (0,-3.5) and (2,-3).. (X) node [midway, below] {$\scriptstyle C$};
				\coordinate [label=left:$\scriptstyle p$] (P) at (0,0);
				\node at (P) [circle, fill, inner sep=1pt] {};
				\coordinate [label=left:$\scriptstyle g^{n_i}p$] (Q) at (0,3.5);
				\node at (Q) [circle, fill, inner sep=1pt] {};

				\draw [directed] (P).. controls (1,-2).. (X);
				\draw [directed] (Q).. controls (1, 1).. (Z);
			\end{tikzpicture}
		}
		\caption{Lemma~\ref{lem_5_2}}\label{fig10}
	\end{figure}

	Theorem~\ref{thm_2_2} tells that such $\tilde{M}$ satisfying the uniform visibility. By Theorem~\ref{thm_2_1}, there exists a connecting geodesic between any two different points on the boundary.

	Let $C$ denote the geodesic connecting $\gamma(\text{--}\infty)$ and $x$, and $\gamma'$ the geodesic connecting $\gamma(-\infty)$ and $y$. Pick any point $p\in\gamma$. Since $g\in \textrm{Iso}(\tilde{M})$ and $\gamma(-\infty)\in \textrm{Fix}(g)$, it keeps the angle

	\begin{displaymath}
		\measuredangle_{g^{n_i}p}(\gamma(\text{--}\infty),g^{n_i}x)=\measuredangle_p(\gamma(-\infty),x)>0.
	\end{displaymath}

	On the other hand, on surfaces, the distance between point $g^{n_i}p$ to the geodesic $g^{n_i}C$ is greater than the distance to $\gamma'$. We have

	\begin{displaymath}
		d(g^{n_i}p, g^{n_i}C)\geq(g^{n_i}p,\gamma')\to\infty.
	\end{displaymath}

	But uniform visibility requests the angle need to tend to $0$ when the distance tend to $\infty$, leading to a contradiction that $\measuredangle_{g^{n_i}p}(\gamma(\text{--}\infty),g^{n_i}x)$ is a constant.
\end{proof}

As shown in the proof, the core of this proof is the argument using the uniform visibility. Inspired by the work in~\cite{CKW}, we are able to generalize this lemma to manifolds with uniform visibility but remove the requirement of the dimension.

\begin{lemma}[General Case]\label{lem_5_3}
	Let $M$ be a Riemannian manifold with no conjugate points and $\tilde{M}$ be its universal cover. Suppose that $M$ satisfies bounded asymptote and uniform visibility. Let $\gamma$ denote an axis geodesic on $\tilde{M}$ of the axial isometry $g\in\Gamma\subset \textrm{Iso}(\tilde{M})$. For any open neighborhoods $U\ni\gamma(\text{--}\infty)$ and $V\ni\gamma(+\infty)$ on the boundary, there exists $n_0\in\mathbb{Z}^+$ such that
	\begin{displaymath}
		g^n(\partial\tilde{M}-U)\subseteq V,\quad g^{-n}(\partial\tilde{M}-V)\subseteq U,\quad\forall n\geq n_0.
	\end{displaymath}
\end{lemma}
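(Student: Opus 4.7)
The plan is to argue by contradiction and replace the planar-ordering argument of Lemma~\ref{lem_5_2} (which relied on Lemma~\ref{lem_5_1} and was dimension-specific) with a purely metric argument combining Ruggiero's divergence estimate (Lemma~\ref{lem_3_0}(2)) with uniform visibility.

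First I would suppose the conclusion fails for some open neighborhoods $U_0 \ni \gamma(\text{--}\infty)$ and $V_0 \ni \gamma(+\infty)$, producing an increasing sequence $n_k \to \infty$ and points $x_k \in \partial\tilde M - U_0$ with $g^{n_k} x_k \in \partial\tilde M - V_0$. Since $\partial\tilde M$ is a topological sphere, both complements are compact; after passing to a subsequence I may assume $x_k \to x \in \partial\tilde M - U_0$ (so $x \neq \gamma(\text{--}\infty)$) and $g^{n_k} x_k \to y \in \partial\tilde M - V_0$ (so $y \neq \gamma(+\infty)$).

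Next I would fix $p = \gamma(0)$, let $\omega > 0$ be the translation length of $g$ along its axis, and write $q_k := g^{n_k} p = \gamma(n_k\omega)$ and $\eta_k := \gamma_{q_k,\,g^{n_k} x_k}$. Because $g^{n_k}$ is an isometry fixing $\gamma(\text{--}\infty)$ and sending $p \mapsto q_k$, uniqueness of connecting geodesics to boundary points (Lemma~\ref{lem_3_0}(3)) yields
\begin{displaymath}
\measuredangle_{q_k}\bigl(-\gamma'(n_k\omega),\,\eta_k'(0)\bigr) \;=\; \measuredangle_{p}\bigl(\gamma(\text{--}\infty),\,x_k\bigr) \;\longrightarrow\; \measuredangle_p\bigl(\gamma(\text{--}\infty),\,x\bigr) \;>\; 0,
\end{displaymath}
so the angle on the left is bounded below by some $\alpha > 0$ for large $k$. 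Lemma~\ref{lem_3_0}(2) then gives a constant $C > 0$ with
\begin{displaymath}
d\bigl(\gamma(n_k\omega - t),\,\eta_k(t)\bigr) \;\geq\; C\alpha\, t, \qquad \forall\, t \geq 0.
\end{displaymath}
At the same time, the cone-topology convergences $q_k \to \gamma(+\infty)$ and $g^{n_k} x_k \to y$ together with $y \neq \gamma(+\infty)$ provide a positive lower bound for $\measuredangle_p(q_k,\, g^{n_k} x_k)$, so uniform visibility supplies a constant $R > 0$ independent of $k$ with $d(p, \eta_k) \leq R$.

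To finish, I would choose $r_k = \eta_k(s_k)$ realising $d(p, \eta_k)$; the triangle inequality forces $|s_k - n_k\omega| = |d(q_k, r_k) - d(q_k, p)| \leq R$, so $s_k \to \infty$ and $\gamma(n_k\omega - s_k)$ lies within $R$ of $p$. Consequently
\begin{displaymath}
d\bigl(\gamma(n_k\omega - s_k),\, r_k\bigr) \;\leq\; d\bigl(\gamma(n_k\omega - s_k),\, p\bigr) + d(p, r_k) \;\leq\; 2R,
\end{displaymath}
while the divergence estimate at $t = s_k$ gives $d(\gamma(n_k\omega - s_k),\, r_k) \geq C\alpha\, s_k \to \infty$, a contradiction. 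The second inclusion follows by applying the identical argument to $g^{-1}$, which is also an axial isometry with axis $\gamma$ but swaps $\gamma(\pm\infty)$.

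The main obstacle is exactly what the surface proof bypassed through Lemma~\ref{lem_5_1}: one must simultaneously force $\eta_k$ to diverge linearly away from $\gamma$ (from the angle at $q_k$) and to remain within bounded distance of $p$ (from the angle at $p$), with no ordering structure available on $\partial\tilde M$. Isometry invariance together with Ruggiero's divergence estimate supplies the first requirement; uniform visibility supplies the second; their incompatibility is the desired contradiction.
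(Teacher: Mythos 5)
Your proof is correct, and it differs from the paper's in a meaningful structural way. The paper never invokes Ruggiero's divergence estimate (Lemma~\ref{lem_3_0}(2)) in this lemma; instead it applies uniform visibility \emph{twice}: first at the translated base point $g^{n_k}p$, to the segment $\gamma_{p,q_k}$ (where $q_k$ is an arbitrary point on the ray $\eta_k$), deducing that this segment passes within $R$ of $g^{n_k}p$ and hence has length $\geq n_k a - R \geq 2R$; this forces $d(p,\eta_k) \geq 2R$ for large $k$, whereupon a second application of uniform visibility at $p$ yields $\measuredangle_p(\gamma(+\infty), g^{n_k}x_k) \leq \epsilon_0$, contradicting $g^{n_k}x_k \notin V_0$. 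You instead replace the first visibility step by Ruggiero's linear divergence of geodesics issuing from $q_k$, and run the $p$-side visibility in the contrapositive direction (large angle at $p$ forces $d(p,\eta_k) \leq R$). The two estimates then collide at the foot-of-perpendicular point $r_k$: it must lie within $2R$ of $\gamma(n_k\omega - s_k)$ by the triangle inequality, yet be at distance $\geq C\alpha s_k \to \infty$ from it by divergence. A minor stylistic difference: the paper works directly with an $\epsilon_0$ extracted from the cone-topology description of $U_0, V_0$, avoiding your passage to subsequential limit points $x$ and $y$, which gives them uniform lower bounds on both angles without a compactness argument. Both routes are valid; the paper's is self-contained within the visibility axiom, while yours trades one visibility application for the divergence estimate, which arguably makes the geometry of the contradiction (a single point forced to be both near and far from $\gamma$) more transparent.
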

\begin{proof}
	Given $\epsilon>0$, let $R_\epsilon=R(\epsilon)$ be the constant in the definition of uniform visibility, and $p=\gamma(0)$.

	Assume not. Parallel to the $2$-dimensional case, we can find open neighborhood $U_0\ni\gamma(\text{--}\infty)$ and $V_0\ni\gamma(+\infty)$ in $\partial\tilde{M}$, and an increasing sequence $\{n_k{\}}_{k=1}^\infty\subset\mathbb{Z}^+$, such that $g^{n_k}(\partial\tilde{M}-U_0)\cap(\partial\tilde{M}-V_0)\neq\emptyset$. Therefore, there is a sequence of points $\{x_k{\}}_{k=1}^\infty\subset\partial\tilde{M}-U_0$ with $g^{n_k}x_k\in\partial\tilde{M}-V_0$.

	Using the cone topology on $\tilde{M}$, there exist $\epsilon_0>0$ such that
	\begin{displaymath}
		\begin{aligned}
			\{\xi\in\partial{M}\mid\measuredangle_p(\gamma(\text{--}\infty),\xi)\leq\epsilon_0\}&\subset U_0,\\
			\{\eta\in\partial{M}\mid\measuredangle_p(\gamma(+\infty),\eta)\leq\epsilon_0\}&\subset V_0.\\
		\end{aligned}
	\end{displaymath}

	Let $R=R(\epsilon_0)$ and $a>0$ be the distance of translation of $g$ on $\gamma$: $g(\gamma(t))=\gamma(t+a)$.

	Consider $x_k\in\partial\tilde{M}-U_0$, $\alpha=\measuredangle_p(x_k,\gamma(\text{--}\infty))\geq\epsilon_0$. Since $g$ keeps the angle and $\gamma(\pm\infty)\in \textrm{Fix}(g)$, we have that
	\begin{displaymath}
		\measuredangle_{g^{n_k}(p)}(g^{n_k}x_k,\gamma(\text{--}\infty))=\measuredangle_p(x_k,\gamma(-\infty))=\alpha\geq\epsilon_0.
	\end{displaymath}

	\begin{figure}[htbp]
		\centering
		\scalebox{0.7}{
			\begin{tikzpicture}
				\coordinate [label=left:$\scriptstyle p$] (P) at (0,0);
				\node at (P) [circle, fill, inner sep=1pt] {};
				\draw (P) circle (4);
				\coordinate[label=above:$\scriptstyle\gamma(+\infty)$] (E) at (0,4);
				\coordinate[label=below:$\scriptstyle\gamma(\text{--}\infty)$] (S) at (0,-4);
				\draw (S)--(E);
				\draw[very thick, red] (0.7,3.94) arc (80:100:4);
				\draw[very thick, red] (-0.7,-3.94) arc (260:280:4);
				\node at (0.7,4.2)  [red] {$\scriptstyle V_0$};
				\node at (0.7,-4.2) [red] {$\scriptstyle U_0$};
				\coordinate [label=left:$\scriptstyle g^{n_k}p$] (R) at (0,2.5);
				\node at (R) [circle, fill, inner sep=1pt] {};
				\coordinate[label=right:$\scriptstyle g^{n_k}x_k$] (Y) at (2.828,2.828);
				\node at (Y) [circle, fill, inner sep=1pt] {};
				\coordinate[label=right:$\scriptstyle x_k$](X) at (2.828,-2.828);
				\node at (X) [circle, fill, inner sep=1pt] {};
				\coordinate [label=above:$\scriptstyle q_k$] (Q) at (1.5,2.55);
				\node at (Q) [circle, fill, inner sep=1pt] {};
				\coordinate [blue, label=right:$\scriptstyle z_k$] (Z) at (1.03,1.8);
				\node at (Z) [blue, circle, fill, inner sep=1pt] {};
				\draw [directed] (P).. controls (1,-0.3) and (2,-1).. (X);
				\draw [purple] (0,-0.5) arc (270:340:0.5);
				\draw [directed] (P).. controls (1,1).. (Y);
				\draw [directed] (R).. controls (1,2.4) and (2,2.6).. (Y);
				\draw [purple] (0,2) arc (270:355:0.5);
				\draw [directed, blue] (P).. controls (0.5,1)..(Q);
				\draw [blue] (R)--(Z);
				\node at (0.3,-0.7) {$\scriptstyle\alpha$};
			\end{tikzpicture}
		}
		\caption{Contraction to the End Point}\label{fig11}
	\end{figure}
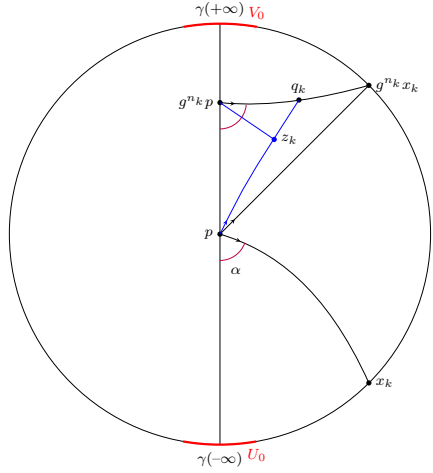

	As shown in Figure~\ref{fig11}, take any point $q_k$ on the geodesic ray $\gamma_{g^{n_k}p,g^{n_k}x_k}$, we have that $\measuredangle_{g^{n_k}p}(\gamma(\text{--}\infty),q_k)\geq\epsilon_0$.

	The uniform visibility implies that the distance between the point $g^{n_k}p$ and the geodesic segment $\gamma_{p,q_k}$ need to be no greater than the uniform constant $R$, say $d(g^{n_k}p, \gamma_{p,q_k})\leq R$.

	Thus, we can find a point $z_k$ on the geodesic segment $\gamma_{p,q_k}$ such that $d(g^{n_k}p,z_k)\leq R$. By the triangular inequality $d(p,z_k)\geq d(p,g^{n_k}p)-d(g^{n_k}p,z_k)\geq n_k a-R$.

	Since $n_k\to +\infty$, we can find some $K>0$, such that $n_k a\geq 3R$, $\forall k>K$. We have
	\begin{equation}
		d(p,z_k)\geq n_k a-R\geq 2R, \quad\forall k\geq K.
	\end{equation}

	By the choice of $z_k$, we know that $d(p,q_k)\geq d(p,z_k)\geq 2R$, which implies that the distance from point $p$ to the geodesic $\gamma_{g^{n_k}p,g^{n_k}x_k}$ is no less than $2R$.

	Again, because of the uniform visibility of the manifold,
	\begin{displaymath}
		\measuredangle_p(g^{n_k}p, g^{n_k}x_k)=\measuredangle_p(\gamma(+\infty), g^{n_k}x_k)\leq\epsilon_0,\quad\forall k\geq K.
	\end{displaymath}

	This implies that $g^{n_k}x_k\in V_0$, contradiction.
\end{proof}

And the last lemma needed is the following.
\begin{lemma}\label{lem_5_4}
	Let $M$ is a Riemannian manifold with no conjugate points. Suppose $M$ satisfies bounded asymptote and uniform visibility. $\tilde{M}$ is the universal cover. Let $v$ be a rank $1$ periodic vector on $T^1M$ and $\tilde{v}$ be its lift to $T^1\tilde{M}$.

	Let $\xi$ denote $\gamma_{\tilde{v}}(+\infty)$ on the boundary, then for any $\eta\neq\xi$ on the boundary, there exist a unique rank $1$ geodesic connecting $\xi$ and $\eta$. That is to say, there exist $\tilde{u}\in T^1\tilde{M}$ such that
	\begin{displaymath}
		\gamma_{\tilde{u}}(\text{--}\infty)=\eta,\quad\gamma_{\tilde{u}}(+\infty)=\xi.
	\end{displaymath}
\end{lemma}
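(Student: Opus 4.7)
The plan is to combine three ingredients: the axial isometry associated to the periodic vector $v$, the dynamical contraction of the boundary provided by Lemma~\ref{lem_5_3}, and the local existence of rank $1$ connecting geodesics near a rank $1$ axis furnished by Theorem~\ref{thm_3_2}. Since $v$ is a periodic vector on $T^1M$, its lift $\tilde{v}$ is the initial vector of an axis of some axial isometry $g\in\Gamma$; in particular $g$ fixes both $\xi=\gamma_{\tilde{v}}(+\infty)$ and $\gamma_{\tilde{v}}(-\infty)$. Existence of some connecting geodesic between $\xi$ and $\eta$ is already guaranteed by the uniform visibility hypothesis via Theorem~\ref{thm_2_1}; the substantive task is to produce one that is rank $1$, and then to deduce uniqueness.

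For existence of a rank $1$ connecting geodesic, I would first apply Theorem~\ref{thm_3_2} to $\tilde{v}$ with an arbitrary small $\epsilon>0$ to obtain boundary neighborhoods $U_\epsilon\ni\gamma_{\tilde{v}}(-\infty)$ and $V_\epsilon\ni\xi$ such that every pair $(\xi',\eta')\in U_\epsilon\times V_\epsilon$ admits a unique rank $1$ connecting geodesic close to $\gamma_{\tilde{v}}$. Since $\eta\neq\xi$, I can shrink $V_\epsilon$ if necessary so that $\eta\notin\overline{V_\epsilon}$. Applying Lemma~\ref{lem_5_3} to $g$ with these two neighborhoods yields an integer $n_0$ such that $g^{-n}\eta\in U_\epsilon$ for every $n\geq n_0$. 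Since $g^{-n}\xi=\xi\in V_\epsilon$, Theorem~\ref{thm_3_2} produces a rank $1$ geodesic $\beta_n$ with $\beta_n(-\infty)=g^{-n}\eta$ and $\beta_n(+\infty)=\xi$. Pushing forward by $g^n$, which is an isometry and therefore preserves rank, I obtain the rank $1$ geodesic $g^n\beta_n$ with endpoints $\eta$ and $\xi$. Its initial unit vector is the desired $\tilde{u}$.

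For uniqueness, suppose there is another geodesic $\alpha\neq g^n\beta_n$ connecting $\eta$ and $\xi$. Then $\alpha$ is bi-asymptotic to the rank $1$ geodesic just constructed, so by Eschenburg's Theorem~\ref{thm_1} the space of central Jacobi fields $J^c$ along $g^n\beta_n$ is nontrivial, forcing $\textrm{rank}(g^n\beta_n)\geq 2$, contradicting what we proved. Hence the rank $1$ connecting geodesic is unique, which in particular also shows that $\xi$ and $\eta$ are connected by only one geodesic at all.

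The main obstacle I anticipate is verifying cleanly that the application of Lemma~\ref{lem_5_3} is legitimate; specifically, one must ensure that $\eta$ can be separated from $\xi$ by a cone-topology neighborhood contained in $V_\epsilon$, so that the contracting iteration $g^{-n}$ drags $\eta$ uniformly into $U_\epsilon$. This is a soft topological step once one recognizes that $\partial\tilde{M}$ is Hausdorff in the cone topology, so it is not a deep obstruction, but it is the logical hinge of the argument that must be stated carefully.
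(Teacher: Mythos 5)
Your argument is essentially the paper's own proof: identify the axial isometry $g$ fixing $\xi$, use Lemma~\ref{lem_5_3} to pull $\eta$ into the neighborhood $U_\epsilon$ from Theorem~\ref{thm_3_2}, obtain a rank $1$ geodesic joining $g^{-n}\eta$ and $\xi$, and push it forward by the isometry $g^n$. Your explicit uniqueness step via Theorem~\ref{thm_1} is a sound addition that the paper leaves implicit.
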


\begin{proof}
	Let $g$ be the axial isometry of $\gamma_{\tilde{v}}$. Then $\xi$ is a fixed point of $g$.

	Since $\xi\neq\eta$, we can pick some small neighborhood $V_\epsilon\ni\xi$ separated them. Then pick $U_\epsilon\ni\gamma_{\tilde{v}}(\text{--}\infty)$ as in Theorem~\ref{thm_3_2}.

	If $\eta\in U_\epsilon$, Theorem~\ref{thm_3_2} implies this is the required rank $1$ geodesic. If $\eta\notin U_\epsilon$, apply Lemma~\ref{lem_5_3} on $\partial\tilde{M}-V_\epsilon$, we have some $n$ such that $g^{-n}(\partial\tilde{M}-V_\epsilon)\subseteq U_\epsilon$. Then we can find a rank $1$ connecting geodesic $\gamma'$ of $g^{-n}\eta$ and $g^{-n}\xi=\xi$. $g^n\gamma'=\gamma_{\eta,\xi}$ is the required one.
\end{proof}

Now we are going to prove the topological transitivity restricted to $\Omega_1$.

\begin{proof}[Proof of Theorem~\ref{thm_5_0}]
	We will show that for any two relatively open sets $U_1, U_2\subset\Omega_1$, there is an orbit connecting points in $U_1$ and $U_2$, which implies the transitivity.

	Theorem~\ref{thm_4_3} the Anosov Closing Lemma on $\Omega_1$ implies that the periodic orbits are dense in $\Omega_1$. There are two rank $1$ periodic unit vectors $v_1\in U_1$ and $v_2\in U_2$. Assume that $v_1\neq-\phi_{t}(v_2)$ for any $t\in\mathbb{R}$. Lift them to the universal cover as shown in Figure~\ref{fig12}.

	\begin{figure}[htbp]
		\centering
		\scalebox{0.6}{
			\begin{tikzpicture}
				\draw (0,0) circle (4);
				\coordinate [label=above right:$\scriptscriptstyle \gamma_{\tilde{v}_2}(+\infty)$] (A) at (1.936,3.5);
				\node at (A) [fill, circle, inner sep=0.5pt] {};
				\coordinate [label=below right:$\scriptscriptstyle \gamma_{\tilde{v}_2}(\text{--}\infty)$] (B) at (1.936,-3.5);
				\node at (B) [fill, circle, inner sep=0.5pt] {};
				\coordinate [label=above left:$\scriptscriptstyle \gamma_{\tilde{v}_1}(+\infty)$] (C) at (-1.936,3.5);
				\node at (C) [fill, circle, inner sep=0.5pt] {};
				\coordinate [label=below left:$\scriptscriptstyle \gamma_{\tilde{v}_1}(\text{--}\infty)$] (D) at (-1.936,-3.5);
				\node at (D) [fill, circle, inner sep=0.5pt] {};

				\draw (B).. controls (1,-1) and (1,1)..(A);
				\draw (D).. controls (-1,-1) and (-1,1)..(C);

				\coordinate [label=left:$\scriptstyle \tilde{v}_1$] (V) at (-1.23,0);
				\node at (V) [fill, circle, inner sep=0.5pt] {};
				\draw [->] (V) -- (-1.15,0.4);

				\coordinate [label=right:$\scriptstyle dg_2^n\tilde{v}_2$] (U) at (1.58, 2.43);
				\node at (U) [fill, circle, inner sep=0.5pt] {};
				\draw [->] (U) -- (1.7, 2.62);

				\coordinate [label=right:$\scriptstyle \tilde{v}_2$] (W) at (1.58, -2.43);
				\node at (W) [fill, circle, inner sep=0.5pt] {};
				\draw [->] (W) -- (1.45, -2.22);

				\draw [blue] (D).. controls (0, -3) and (1,2).. (A);
				\draw [red] (B).. controls (0,-1) and (-1,3)..(C);

				\coordinate [label=right:$\scriptstyle \tilde{v}_3$] (P) at (-0.7,-2.5);
				\node at (P) [fill, circle, inner sep=0.5pt] {};
				\draw [blue, ->] (P) -- (-0.4,-2.2);

				\coordinate [label=right:$\scriptstyle \tilde{v}_4$] (Q) at (-1.2,2.5);
				\node at (Q) [fill, circle, inner sep=0.5pt] {};
				\draw[red, ->] (Q) -- (-1.3,2.9);

				\coordinate [label=left:$\scriptstyle \tilde{v}_{32}$] (S) at (1.45,2.5);
				\node at (S) [fill, circle, inner sep=0.5pt] {};
				\draw[blue,->] (S) -- (1.5,2.75);

				\coordinate [label=below:$\scriptscriptstyle g_2^{\text{--}n}\gamma_{\tilde{v}_1}(\text{--}\infty)$] (X) at (1.3,-3.78);
				\node at (X) [fill, circle, inner sep=1pt] {};

				\coordinate [label=above:$\scriptscriptstyle g_2^{m}\gamma_{\tilde{v}_1}(+\infty)$] (Y) at (0,4);
				\node at (Y) [fill, circle, inner sep=1pt] {};

				\coordinate [label=above right:$\scriptscriptstyle dg_2^m\tilde{v}_4$] (I) at (1.6,-2.2);
				\node at (I) [fill, circle, inner sep=0.5pt] {};
				\draw [red,->] (1.6,-2.2)--(1.5,-2);

				\coordinate [label=above left:$\scriptscriptstyle dg_2^{\text{--}n}\tilde{v}_3$] (J) at (1.4,-2.1);
				\node at (J) [fill, circle, inner sep=0.5pt] {};
				\draw [blue,->] (J)--(1.3,-1.95);

			\end{tikzpicture}
		}
		\caption{On Universal Cover $\tilde{M}$}\label{fig12}
	\end{figure}
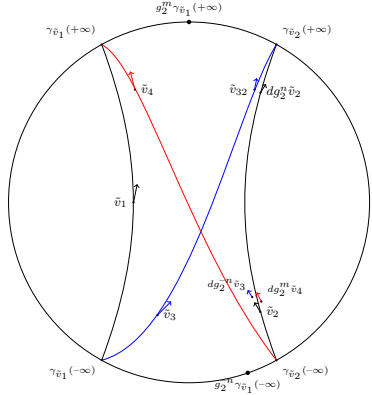

	Using Lemma~\ref{lem_5_4}, we can find a rank $1$ unit vector $\tilde{v}_3$ such that $\gamma_{\tilde{v}_3}$ connecting $\gamma_{\tilde{v}_1}(\text{--}\infty)$ and $\gamma_{\tilde{v}_2}(+\infty)$, and a rank $1$ unit vector $\tilde{v}_4$ such that $\gamma_{\tilde{v}_4}$ connecting $\gamma_{\tilde{v}_2}(\text{--}\infty)$ and $\gamma_{\tilde{v}_1}(+\infty)$.

	Let $v_3$ and $v_4$ denote the projection of $\tilde{v}_3$ and $\tilde{v}_4$ onto $M$. The geodesic $\gamma_{v_3}$ is positively asymptotic to $\gamma_{v_2}$ and negatively asymptotic to $\gamma_{v_1}$, while $\gamma_{v_4}$ behaves the opposite.

	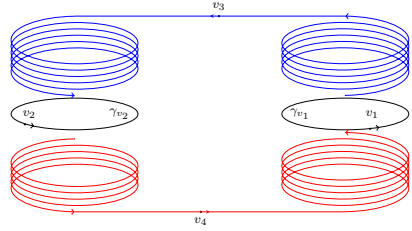
\begin{figure}[htbp]
		\centering
		\scalebox{0.6}{
			\begin{tikzpicture}
				\draw [ decoration={markings, mark=at position 0.6 with {\arrow{>}}},postaction={decorate}] (-3,0) ellipse (40pt and 10pt);
				\node at (-4.1,-0.23) [fill, circle, inner sep=0.5pt] {};
				\node at (-2,0) {$\scriptstyle\gamma_{v_2}$};
				\node at (-4,0) {$\scriptstyle v_2$};
				\draw [ decoration={markings, mark=at position 0.875 with {\arrow{>}}}, postaction={decorate}] (3,0) ellipse (40pt and 10pt);
				\node at (3.55,-0.33) [fill, circle, inner sep=0.5pt] {};
				\node at (2,0) {$\scriptstyle\gamma_{v_1}$};
				\node at (3.6,0) {$\scriptstyle v_1$};
				\coordinate [label=above:$\scriptstyle v_3$] (A) at (0.2,2.17);
				\node at (A) [fill, circle, inner sep=0.5pt] {};
				\draw[blue,decoration={aspect=0.35, segment length=4pt, amplitude=40pt,coil},decorate,arrows = {<[bend]-}] (-3,0.41)--(-3,2.17);
				\draw[blue,decoration={aspect=0.35, segment length=4pt, amplitude=40pt,coil},decorate,arrows={<[bend]-}] (3,2.17)--(3,0.4);
				\draw[blue, midway_mod] (3,2.17)--(-3,2.17);
				\coordinate [label=below:$\scriptstyle v_4$] (B) at (-0.2,-2.17);
				\node at (B) [fill, circle, inner sep=0.5pt] {};
				\draw[red,decoration={aspect=0.35, segment length=4pt, amplitude=40pt,coil},decorate,arrows = {<[bend]-}] (-3,-2.17)--(-3,-0.54);
				\draw[red,decoration={aspect=0.35, segment length=4pt, amplitude=40pt,coil},decorate,arrows={<[bend]-}] (3,-0.41)--(3,-2.17);
				\draw[red, midway_mod] (-3,-2.17)--(3,-2.17);
			\end{tikzpicture}
		}
		\caption{On the Base $M$}\label{fig13}
	\end{figure}

	The last thing left is to show that $v_3\in\Omega_1$. We only need to show that $v_3$ is non-wandering, because $\gamma_{v_3}$ is positively asymptotic to $\gamma_{v_2}$, $\tilde{v}_3$ must share the same stable horocycle and the same strong stable manifolds with $\gamma'_{\tilde{v}_2}(t)$ for some $t$, which coincide. Similarly, the negative asymptote to $\gamma_{v_1}$ guarantees the unstable horocycle coincides with the strong unstable manifolds in the lift $T^1\tilde{M}$. We can conclude that if the rank $1$ vector $v_3$ lies in $\Omega$, it must be contained in $\Omega_1$.

	Because $\gamma_{v_3}$ is positively asymptotic to $\gamma_{v_2}$, we can pick a unit vector $v_{32}$ on $\gamma_{v_3}$ which is close to $v_2$ on $T^1M$. Lift $v_{32}$ to $\tilde{v}_{32}$ on the universal cover as in Figure~\ref{fig12}. It might not be close to $\tilde{v}_2$ but we can find some $n$ such that $\tilde{v}_{32}$ is close to $dg_2^n\tilde{v}_2$. Here $g_2$ is the axial isometry on $\gamma_{\tilde{v}_2}$.

	Similarly, as $\gamma_{v_4}$ is negatively asymptotic to $\gamma_{v_2}$, we have some $v_{42}$ close to $v_4$. And lift it to the universal cover, we have $\tilde{v}_{42}$ is close to $dg_2^{-m}\tilde{v}_2$ for some $m$.

	Therefore $dg_2^{-n}\tilde{v}_3$ are close to $\tilde{v}_2$, and $dg_2^{m}\tilde{v}_4$ are close to $\tilde{v}_2$. Using Theorem~\ref{thm_4_1} the local product structure on the universal cover with $dg_2^{-n}\tilde{v}_3$ and $dg_2^m\tilde{v}_4$, we can find a connecting geodesic of $g_2^{m}\gamma_{\tilde{v}_2}(+\infty)$ and $g_2^{-n}\gamma_{\tilde{v}_1}(\text{--}\infty)$.

	Project this geodesic back on the base $M$, say $\gamma_{+}$. We can see this geodesic starting near $v_3$, travels to the small neighborhood of $v_2$, going back and shadows the geodesic $\gamma_{v_4}$.

	Do the same operation with respect to $v_3, v_4$ and $\gamma_{v_1}$, we can find a similar geodesic, say $\gamma_{-}$, starting near $v_4$, travels to the small neighborhood of $v_1$ and going back shadows the geodesic $\gamma_{v_3}$.

	Since $\gamma_{+}$ and $\gamma_{-}$ travels close to $v_3$, we can use the local product structure near $v_3$ to glue two nearby tangent vectors on $\gamma_{+}$ and $\gamma_{-}$ respectively. This geodesic, say $\gamma_u$ with $u$ close to $v_3$, starts near $v_3$, travels close to $v_1$ and $v_2$, and go back to $v_3$ again. Therefore, $v_3$ is non-wandering as required.

	If $v_1=-\phi_{t}(v_2)$ for some $t\in\mathbb{R}$, that is to say, $v_1$ and $v_2$ generate opposite periodic orbits. The above argument fails. Here we need the condition that the geodesic flow has at least three periodic orbits in $\Omega_1$. We can take a third periodic vector $w$ whose orbit different from the given two. Lift to the universal cover, we can find $\tilde{w}_1$ whose orbit positive asymptotic to $\gamma_{\tilde{v}_1}(+\infty)$ and negative asymptotic to $\gamma_{\tilde{w}}(\text{--}\infty)$, and $\tilde{w}_2$ whose orbit positive asymptotic to $\gamma_{\tilde{v}_2}(+\infty)$ and negative asymptotic to $\gamma_{\tilde{w}}(\text{--}\infty)$. Apply the previous proof to $\tilde{w}_1$ and $\tilde{w}_2$ to get the required connecting orbit. And we are done.
\end{proof}

\begin{remark}
	Apply the Anosov Closing Lemma to the shadowing vector $u$ in the last proof, we actually find a periodic geodesic connecting the open sets $U$ and $V$. This implies that the topological transitivity holds to the restriction of $\Omega^{\text{\text{rec}}}_1$. For details of this discussion, see the proof of Theorem~\ref{M_p_generic}.
\end{remark}

\begin{remark}
	We can easily adapt our argument of transitivity to the restriction of $\Omega_{\text{NF}}$ with the assumption that $\Omega_{\text{NF}}$ is open in $\Omega$ and the geodesic flow has at least three periodic orbits. The proof is quite similar and parallel to the proof given above.
\end{remark}

\section{\bf Generic Invariant Measures}\label{sec6}
In this section, we consider some basic applications of the results in the previous sections. This is inspired by Coud\`ene and Schapara's works on the generic measures for geodesic flows on negatively/ non-positively curved Riemannian manifolds (cf.~\cite{CS1, CS2}).

First, we introduce some notations. In this section, we always assume that $M$ is a Riemannian manifold with no conjugate points, which is not necessarily compact but satisfying bounded asymptote and uniform visibility. In the previous sections~\ref{sec4} and~\ref{sec5}, we have shown that the local product structure, the Anosov Closing Lemma with restriction to $\Omega_1$ and topological transitivity on $\Omega_1$ hold for geodesic flows on $M$.

We use $\mathcal{M}^1(E)$ to denote the set of Borel invariant probability measures for the geodesic flow supported on an invariant subset $E\subset T^1M$, and $\mathcal{M}^1_{erg}(E)\subset\mathcal{M}^1(E)$ to denote the set of ergodic probability measures on $E$. We usually consider the set $E$ to be the non-wandering set $\Omega\subset T^1M$ or its invariant subset  $\Omega_1$ or $\Omega_{\text{NF}}$, which we introduce in Section~\ref{sec2}.  We denote the set of normalized Dirac measures that evenly distributed on closed trajectories in $E$ as $\mathcal{M}^1_{p}(E)$, and its convex hull $\text{CH}(\mathcal{M}^{1}_{p}(E))$.

First, we can establish the following theorem:

\begin{theorem}\label{M_p_generic}
	Let $M$ be a Riemannian manifold as stated in the beginning of this section. Assume the geodesic flow has at least three periodic orbits in $\Omega_1$. Then the set $\mathcal{M}^1_{p}(\Omega_{1})\subset \mathcal{M}^1(\Omega_{1})$ is a dense subset.
\end{theorem}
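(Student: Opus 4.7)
The plan is to follow the classical Sigmund--Coud\`ene--Schapira strategy in three stages: approximate each ergodic measure by periodic ones via the Anosov Closing Lemma, upgrade to finite convex combinations of ergodic measures by an orbit-gluing argument, and invoke the ergodic decomposition to reach arbitrary invariant measures.

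For the first stage, fix an ergodic $\mu\in\mathcal{M}^1_{erg}(\Omega_1)$ and a basic weak-$*$ neighborhood determined by test functions $f_1,\dots,f_k\in C_b(T^1M)$ and $\varepsilon>0$. By the Birkhoff ergodic theorem, $\mu$-a.e.\ $v\in\Omega_1$ is $\mu$-generic and in particular recurrent, so there is a sequence $T_n\to\infty$ with $d_1(v,\phi_{T_n}v)\to 0$. Applying the Anosov Closing Lemma on $\Omega_1$ (Theorem~\ref{thm_4_3}) yields rank $1$ periodic vectors $v_n\in\Omega_1$ with periods $T_n'$ close to $T_n$ whose orbits $\varepsilon_n$-shadow $\{\phi_tv:0\le t\le T_n\}$ for some $\varepsilon_n\downarrow 0$. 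Uniform continuity of the $f_j$ on a compact set containing the supports then forces the normalized periodic measure on the orbit of $v_n$ to be weak-$*$ close to the Birkhoff time-average along $v$, and hence to $\mu$ itself.

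For the second stage, consider a finite convex combination $\sum_{i=1}^{m}\alpha_i\nu_i$ where each $\nu_i$ is a periodic measure already in $\mathcal{M}^1_p(\Omega_1)$ supported on a rank $1$ periodic orbit $\mathcal{O}_i\subseteq\Omega_1$. Using topological transitivity on $\Omega_1$ (Theorem~\ref{thm_5_0}) together with the local product structure (Theorem~\ref{thm_4_1}), I would assemble a pseudo-orbit spending time approximately $\alpha_iT$ in a small tube around $\mathcal{O}_i$ for each $i$, with short transition segments chosen via Theorem~\ref{thm_5_0} and cyclically concatenated so as to return within $\delta$ of its starting point. Theorem~\ref{thm_4_3} then closes this pseudo-orbit into an actual rank $1$ periodic orbit in $\Omega_1$, whose empirical measure lies in a prescribed weak-$*$ neighborhood of $\sum\alpha_i\nu_i$. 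Combining this with the first stage and the fact that every $\mu\in\mathcal{M}^1(\Omega_1)$ is a weak-$*$ limit of finite convex combinations of ergodic measures (a consequence of the ergodic decomposition and Choquet theory) completes the density claim.

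The main obstacle is the second stage. In the classical Axiom A setting this step is handled by Bowen's specification property, but here hyperbolicity is only non-uniform and no global specification is available. The delicate point is to verify that the assembled pseudo-orbit stays in the open set on which Theorem~\ref{thm_4_3} is formulated, and that the time fractions spent near each $\mathcal{O}_i$ are not distorted by the transition pieces; the three-periodic-orbits hypothesis is used here precisely so that Theorem~\ref{thm_5_0} supplies transitive connecting orbits between any pair $\mathcal{O}_i,\mathcal{O}_j$, even in the degenerate case where some pair are time-reversals of one another.
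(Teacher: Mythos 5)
Your three-stage plan matches the paper's strategy in outline, and stages one and two are essentially the paper's argument: Birkhoff averages plus the Anosov Closing Lemma give density of periodic measures in $\mathcal{M}^1_{erg}(\Omega_1)$, and the gluing construction via transitivity, local product structure and closing handles finite convex combinations. Your observation about the three-periodic-orbits hypothesis and time-reversed orbits is also correct and is exactly how the paper justifies the degenerate case.

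However, there is a genuine gap in your third stage. You invoke ``the ergodic decomposition and Choquet theory'' to conclude $\text{CH}(\mathcal{M}^1_{erg}(\Omega_1))$ is dense in $\mathcal{M}^1(\Omega_1)$, but this is precisely the place the paper flags as delicate: $\Omega_1$ is defined by the condition that the lifted strong stable/unstable manifolds coincide with the horocycles, and it is not obviously a Polish (or even clearly Borel) subspace of $T^1M$, so the ergodic decomposition theorem cannot be applied to it off the shelf. The paper's resolution is to pass to $\Omega^{\text{rec}}_1$, the set of rank $1$ recurrent vectors: by Proposition~\ref{thm_3_3} this set is contained in $\Omega_1$, it is the intersection of the $G_\delta$ set of recurrent vectors with the open set of rank $1$ vectors and hence a $G_\delta$ subset of $T^1M$, thus Polish; and by Poincar\'e Recurrence every $\mu\in\mathcal{M}^1(\Omega_1)$ is actually supported on $\Omega^{\text{rec}}_1$. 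Only after this reduction does the ergodic decomposition give $\mathcal{M}^1(\Omega_1)=\text{CH}(\mathcal{M}^1_{erg}(\Omega_1))$. Without this step your argument does not close. A second, minor point: the set you label ``the main obstacle'' --- verifying the assembled pseudo-orbit stays where the closing lemma applies --- is handled in the paper not by a pseudo-orbit argument but by using the local product structure directly on $T^1\tilde{M}$ to produce a genuine geodesic that successively shadows the lifted pieces, then applying Theorem~\ref{thm_4_3} to that actual orbit; phrasing it as a specification-style pseudo-orbit closing is not quite what the available lemmas deliver here.
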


\begin{proof}
	The proof can be divided into two parts. The first part is to prove that $\mathcal{M}^1_{p}(\Omega_{1})$ is dense in $\textrm{CH}(\mathcal{M}^1_{erg}(\Omega_{1}))$, which follows the idea of the proof of Proposition 3.2 in~\cite{CS1}. And the second part is to prove that $\textrm{CH}(\mathcal{M}^1_{erg}(\Omega_{1}))=\mathcal{M}^1(\Omega_{1})$.

	First of all, we note that by the Anosov Closing Lemma, the set of periodic vectors are dense in $\Omega_{1}$. Using the Birkhoff Ergodic Theorem,  we can show that each ergodic probability measure on $\Omega_{1}$ is a limit of a sequence of normalized Dirac measures evenly distributed on closed trajectories in $\Omega_{1}$ (cf.~\cite{CS1} Lemma 2.2). This implies that $\mathcal{M}^{1}_{p}(\Omega_{1})$ is dense in $\mathcal{M}^1_{erg}(\Omega_{1})$.

	Next, we prove that $\mathcal{M}^1_{p}(\Omega_{1})$ is dense in $\text{CH}(\mathcal{M}^1_{erg}(\Omega_{1}))$. It is sufficient to show that $\mathcal{M}^1_{p}(\Omega_{1})$ is dense in $\text{CH}(\mathcal{M}^{1}_{p}(\Omega_{1}))$, for the latter one is obviously dense in $\text{CH}(\mathcal{M}^1_{erg}(\Omega_{1}))$. We only need to show that the convex combination of finitely many normalized Dirac measures on periodic orbits in $\Omega_1$ can be arbitrarily approximated by one normalized Dirac measure on a periodic orbit. To simplify the argument and illustrate the main idea of the proof, we exhibit this by showing how to find this approximation in the case of convex combination of $3$ normalized Dirac measures. Topological transitivity of the geodesic flow on $\Omega_1$ is needed here.

	Let $v_1, v_2, v_3\in \mbox{Per}(\Omega_{1})$ and $l_1, l_2, l_3>0$ be their periods respectively. Without loss of generality, we can assume that adjacent two of them not stay on the orbit but in the opposite direction of each other. That is to say, $v_{i}\neq\phi_t(v_{i+1})$ for any $t\in\mathbb{R}$. Let $c_1, c_2, c_3>0$ with $c_1+c_2+c_3=1$. We want to find a vector $u\in \mbox{Per}(\Omega_{1})$ such that $\mu_u$ is close enough to $c_1\mu_{v_1}+c_2\mu_{v_2}+c_3\mu_{v_3}$. Here $\mu_w, w\in \mbox{Per}(\Omega_{1})$ denotes the normalized Dirac measure evenly distributed on the orbit of $w$. Since $\mathbb{Q}$ is dense in $\mathbb{R}$, we can assume  $c_1, c_2, c_3$ are all rational numbers, i.e., $c_1=\frac{p_1}{q}, c_2=\frac{p_2}{q}, c_3=\frac{p_3}{q}$ for some $p_1, p_2, p_3, q\in\mathbb{Z}^+$.

	By the transitivity of the geodesic flow  on $\Omega_{1}$, we can find vectors $v_{12}, v_{23}, v_{31}\in\Omega_{1}$ such that $v_{ij}$ is close enough to $v_i$ and $\phi_{t_i}(v_{ij})$ close enough to $v_j$ for some $t_i>0$.

	Given any large integer $N>0$, by the local product structure, we can find a vector $v\in\Omega_{1}$ with the following property: $\gamma_v$ shadows $\gamma_{v_1}$ for $Np_1l_1$ length of time and then goes to a small neighborhood of $v_2$ by shadowing the orbit of $v_{12}$; it shadows $\gamma_{v_2}$ for $Np_2l_2$ length of time and goes to a small neighborhood of $v_3$ by shadowing the orbit of $v_{23}$; it shadows $\gamma_{v_3}$ for $Np_3l_3$ length of time and goes to a small neighborhood of $v_{1}$ by shadowing the orbit of $v_{31}$. The method of this construction of the shadowing orbit is the same as the one in the proof of the transitivity (Theorem~\ref{thm_5_0}). Or people can refer to~\cite{Co, CS1}, in which the procedure to construct such orbit $\gamma_v$ is called gluing $\gamma_{v_1}|_{[0,Np_1l_1]}$, $\gamma_{v_{12}}|_{[0,t_1]}$, $\gamma_{v_2}|_{[0,Np_2l_2]}$, $\gamma_{v_{23}}|_{[0,t_2]}$, $\gamma_{v_3}|_{[0,Np_3l_3]}$ and $\gamma_{v_{31}}|_{[0,t_3]}$ consequently.

	By the Anosov Closing Lemma, we can find a vector $u\in\text{Per}(\Omega_1)$ which is sufficiently close to $v$. It is easy to show that when $N>0$ is large enough, $\mu_u$ will be very close to $c_1\mu_{v_1}+c_2\mu_{v_2}+c_3\mu_{v_3}$. 

	We can do this operation for any $n\geq 2$ periodic orbits. Now we get that $\mathcal{M}^1_{p}(\Omega_{1})$ is dense in $\text{CH}(\mathcal{M}^1_{erg}(\Omega_{1}))$. What is left is to show that $\mathcal{M}^1(\Omega_1)=\text{CH}(\mathcal{M}^1_{erg}(\Omega_{1}))$. The main obstacle is that the set $\Omega_1$ may not be Polish, thus $\mathcal{M}^1(\Omega_1)$ may not be Polish, the ergodic decomposition theorem may not hold.

	However, we can consider the subset $\Omega^{\text{rec}}_1$, the set of rank $1$ recurrent vectors in $\Omega_1$, instead. We know that the set of recurrent vectors in $T^{1}M$ is a $G_{\delta}$ set, and all rank $1$ vectors form an open set. As we have shown in Proposition~\ref{thm_3_3}, all rank $1$ recurrent vectors lie in $\Omega_1$. As an intersection of a $G_{\delta}$ set and an open set, $\Omega^{\text{rec}}_1$ is a $G_{\delta}$ set inside $T^{1}M$, thus a Polish space. By the Poincar\'e Recurrence Theorem, all Borel invariant probability measures on $\Omega_1$ are supported on $\Omega^{\text{rec}}_1$.

	This implies $\mathcal{M}^1(\Omega_1)$ is a Polish space and the ergodic decomposition theorem holds. We have $\mathcal{M}^1(\Omega_1)=\text{CH}(\mathcal{M}^1_{erg}(\Omega_{1}))$. This completes the proof of Theorem~\ref{M_p_generic}.
\end{proof}

\begin{remark}
	As we said at the end of Section~\ref{sec5}, this construction procedure actually gives a periodic connecting geodesic of two different periodic orbits in $\Omega_1$, which implies that the topological transitivity holds to the restriction of $\Omega^{\text{rec}}_1$.
\end{remark}

A straightforward corollary of this theorem is that, in $\Omega_1$, the set of ergodic probability measures is dense in the set of invariant probability measures.

\begin{corollary}
	Suppose $M$ is as stated in Theorem~\ref{M_p_generic}. $\mathcal{M}^1_{erg}(\Omega_{1})$ is dense in $\mathcal{M}^1(\Omega_1)$.
\end{corollary}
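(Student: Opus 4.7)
The plan is to deduce this directly from Theorem~\ref{M_p_generic} by observing an elementary inclusion. Every normalized Dirac measure evenly distributed on a closed orbit is itself an ergodic invariant probability measure, since the geodesic flow restricted to a single periodic orbit is uniquely ergodic. Therefore the set $\mathcal{M}^1_{p}(\Omega_1)$ is a subset of $\mathcal{M}^1_{erg}(\Omega_1)$, which in turn is a subset of $\mathcal{M}^1(\Omega_1)$.

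Given this inclusion, I would simply chain the density assertions. First I would recall that Theorem~\ref{M_p_generic} asserts $\mathcal{M}^1_{p}(\Omega_1)$ is dense in $\mathcal{M}^1(\Omega_1)$ (with respect to the weak-$*$ topology on measures). Since $\mathcal{M}^1_{p}(\Omega_1)\subseteq \mathcal{M}^1_{erg}(\Omega_1) \subseteq \mathcal{M}^1(\Omega_1)$, any measure in $\mathcal{M}^1(\Omega_1)$ is a weak-$*$ limit of elements of $\mathcal{M}^1_{p}(\Omega_1)$, and these elements lie in $\mathcal{M}^1_{erg}(\Omega_1)$. Hence $\mathcal{M}^1_{erg}(\Omega_1)$ is dense in $\mathcal{M}^1(\Omega_1)$.

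There is no real obstacle here; this is a purely formal consequence of Theorem~\ref{M_p_generic}. The only minor point worth remarking in the write-up is the justification that Dirac measures on closed orbits are ergodic, which follows because such an orbit together with the flow is conjugate to an irrational or rational rotation on a circle (trivially uniquely ergodic in the single-orbit sense, as the only invariant probability measure supported on the orbit is the normalized length measure). Apart from that, the proof reduces to citing Theorem~\ref{M_p_generic} and invoking transitivity of the relation ``is dense in.''
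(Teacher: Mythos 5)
Your proof is correct and is exactly the argument the paper intends: the paper presents this as a ``straightforward corollary'' of Theorem~\ref{M_p_generic}, relying on the same observation that $\mathcal{M}^1_{p}(\Omega_1)\subseteq\mathcal{M}^1_{erg}(\Omega_1)$ since the normalized Dirac measure on a closed orbit is ergodic. Nothing is missing.
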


By Lemma 4.1 in~\cite{CS1}, we know that the set of invariant probability measures which are fully supported on $\Omega^{\text{rec}}_1$ is a dense $G_{\delta}$ subset in $\mathcal{M}^1(\Omega_1)$. We also know that the geodesic flow admits local product structure, Anosov Closing Lemma and topological transitivity to the restriction of $\Omega^{\text{rec}}_1$. By Theorem 4.2 in~\cite{CS1}, $\mathcal{M}^1_{erg}(\Omega^{\text{rec}}_{1})$ is a $G_{\delta}$ subset in $\mathcal{M}^1(\Omega^{\text{rec}}_1)$, which is exactly $\mathcal{M}^1(\Omega_1)$. Since $\mathcal{M}^1_{p}(\Omega^{\text{rec}}_{1})=\mathcal{M}^1_{p}(\Omega_{1})$ is dense in $\mathcal{M}^1(\Omega_1)$, we get that $\mathcal{M}^1_{erg}(\Omega^{\text{rec}}_{1})$ is a dense $G_{\delta}$ subset in  $\mathcal{M}^1(\Omega_1)$. Take the intersection of the two  dense $G_{\delta}$ subsets in $\mathcal{M}^1(\Omega_1)$, we know that there exists a dense $G_{\delta}$ subset in $\mathcal{M}^1(\Omega_1)$, whose elements are ergodic probability measures fully supported on $\Omega^{\text{rec}}_1$. Therefore, we have the following corollary:
\begin{corollary}
	Suppose $M$ is as stated in Theorem~\ref{M_p_generic}. All ergodic probability measure with full support on $\Omega^{\text{rec}}_1$ form a dense $G_{\delta}$ set in $\mathcal{M}^1(\Omega_1)$. Specifically, we can find an ergodic probability measure whose support contains all rank 1 periodic vectors.
\end{corollary}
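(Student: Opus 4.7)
The plan is a direct Baire category argument: I would exhibit two dense $G_\delta$ subsets of the Polish space $\mathcal{M}^1(\Omega_1)$ and take their intersection. The crucial preliminary, already noted in the proof of Theorem~\ref{M_p_generic}, is that $\Omega^{\text{rec}}_1$ is a $G_\delta$ subset of $T^1M$ (the intersection of the $G_\delta$ set of recurrent vectors with the open set of rank $1$ vectors), hence Polish, and that by the Poincar\'e Recurrence Theorem every element of $\mathcal{M}^1(\Omega_1)$ is supported on $\Omega^{\text{rec}}_1$. Consequently $\mathcal{M}^1(\Omega_1)=\mathcal{M}^1(\Omega^{\text{rec}}_1)$ is itself Polish in the weak-$*$ topology, which is exactly the ambient space in which I want to apply Baire.

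For the first dense $G_\delta$, I would quote Lemma~4.1 of~\cite{CS1}: in any Polish dynamical system, the invariant probability measures whose topological support equals the whole ambient space form a $G_\delta$ subset, which is dense as soon as some dense family of measures contains one of full support. This density is supplied by Theorem~\ref{M_p_generic} together with the standard convexity trick: fix a reference measure of full support on $\Omega^{\text{rec}}_1$ built from a countable convex combination of periodic Dirac measures whose orbits are dense in $\Omega^{\text{rec}}_1$ (such a dense sequence exists because the Anosov Closing Lemma makes periodic orbits dense in $\Omega_1$), then mix it with any $\mu\in\mathcal{M}^1(\Omega_1)$ with small weight.

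For the second dense $G_\delta$, I would apply Theorem~4.2 of~\cite{CS1} to conclude that $\mathcal{M}^1_{erg}(\Omega^{\text{rec}}_1)$ is a dense $G_\delta$ subset of $\mathcal{M}^1(\Omega^{\text{rec}}_1)$. Its hypotheses---local product structure, the Anosov Closing Lemma, and topological transitivity restricted to $\Omega^{\text{rec}}_1$---are exactly what we have established in Theorems~\ref{thm_4_1},~\ref{thm_4_3}, and~\ref{thm_5_0} (together with the remark following the proof of Theorem~\ref{thm_5_0} that extends transitivity to $\Omega^{\text{rec}}_1$). Density again comes for free from Theorem~\ref{M_p_generic}, since periodic Dirac measures in $\mathcal{M}^1_p(\Omega_1)$ are ergodic.

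Finally, Baire's theorem yields that the intersection of these two dense $G_\delta$ sets is itself a dense $G_\delta$ subset of $\mathcal{M}^1(\Omega_1)$, and each of its elements is an ergodic probability measure with full support on $\Omega^{\text{rec}}_1$; this gives the first assertion. For the second assertion, pick any $\mu$ in this intersection: its support is a closed $\phi_t$-invariant set containing $\Omega^{\text{rec}}_1$, so in particular it contains every rank $1$ periodic vector (such vectors are automatically recurrent and of rank $1$, hence lie in $\Omega^{\text{rec}}_1$). I do not anticipate a substantial obstacle here: the abstract genericity machinery of~\cite{CS1} transfers essentially verbatim once local product structure, Anosov Closing, and transitivity are established on $\Omega^{\text{rec}}_1$, and the only minor care is to make sure the dense fully-supported measure used in the first step really exists, which is immediate from Theorem~\ref{M_p_generic}.
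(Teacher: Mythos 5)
Your proposal matches the paper's argument essentially verbatim: you identify $\mathcal{M}^1(\Omega_1)$ with $\mathcal{M}^1(\Omega^{\text{rec}}_1)$ via Poincar\'e recurrence, invoke Lemma~4.1 and Theorem~4.2 of~\cite{CS1} to produce two dense $G_\delta$ subsets (fully supported measures and ergodic measures), and intersect them by Baire. The only additions you make — spelling out the convexity trick for density of fully supported measures and the observation that rank~$1$ periodic vectors lie in $\Omega^{\text{rec}}_1$ — are correct elaborations of steps the paper leaves implicit.
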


In~\cite{CS2}, Coud\`ene and Schapara showed that, for the geodesic flows on rank $1$ non-positively curved manifolds, under the assumption that $\Omega_{\text{NF}}$ is open in $\Omega$, the set $\mathcal{M}^1_{p}(\Omega_{\text{NF}})\subset \mathcal{M}^1(\Omega_{\text{NF}})$ is dense, and  set of ergodic probability measures fully supported on $\Omega_{\text{NF}}$ is a residual subset in $\mathcal{M}^1(\Omega_{\text{NF}})$. Moreover, denote the set of invariant probability measures with zero entropy on $\Omega_{\text{NF}}$ by $\mathcal{M}^1_{0}(\Omega_{\text{NF}})$, they showed that  $\mathcal{M}^1_{0}(\Omega_{\text{NF}})\subset \mathcal{M}^1(\Omega_{\text{NF}})$ is also a residual subset. This means that zero entropy is a generic property for invariant probability measures on $\Omega_{\text{NF}}$. Note that to establish the above generic properties of the invariant measures for geodesic flows, the only things we need are the Anosov Closing Lemma, the local product structure and the topological transitivity of the geodesic flow. Thus, these results can be extended to the manifolds with no conjugate points that satisfy our assumptions as we have shown these three properties holds to the restriction on $\Omega_{\text{NF}}$.

\begin{proposition}\label{omegaNF_generic}
	Suppose $M$ is a Riemannian manifold with no conjugate points which satisfies bounded asymptote and uniform visibility. Assume that $\Omega_{\text{NF}}$ is open in $\Omega$ and the geodesic flow has at least three periodic orbits in $\Omega_{\text{NF}}$. Then
	\begin{enumerate}
		\item The set $\mathcal{M}^1_{p}(\Omega_{\text{NF}})\subset \mathcal{M}^1(\Omega_{\text{NF}})$ is a dense subset.
		\item The set of ergodic probability measures fully supported on $\Omega_{\text{NF}}$ is a residual subset in $\mathcal{M}^1(\Omega_{\text{NF}})$.
		\item The set of invariant probability measures with zero entropy $\mathcal{M}^1_{0}(\Omega_{\text{NF}})\subset \mathcal{M}^1(\Omega_{\text{NF}})$ is a residual  subset.
	\end{enumerate}
\end{proposition}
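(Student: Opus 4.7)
The plan is to reduce the three claims to a direct transcription of the Coud\`ene--Schapira arguments in \cite{CS1, CS2}, using that the three ingredients their proofs require (local product structure, Anosov Closing Lemma, topological transitivity) all hold on the restriction to $\Omega_{\text{NF}}$. The remarks following Theorem~\ref{thm_4_3} and Theorem~\ref{thm_5_0} already assert these three properties under the extra hypothesis that $\Omega_{\text{NF}}$ is open in $\Omega$ and that there are at least three periodic orbits in $\Omega_{\text{NF}}$, so the first step is just to invoke them. A second preliminary observation is that $\Omega$ is closed in $T^1M$ (hence Polish), and an open subset of a Polish space is Polish; therefore $\Omega_{\text{NF}}$ is a Polish space and $\mathcal{M}^1(\Omega_{\text{NF}})$ is Polish in the weak-$*$ topology, which makes the ergodic decomposition theorem and the classical genericity arguments of \cite{CS1, CS2} directly applicable.

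For claim (1) I would repeat the two-step strategy of Theorem~\ref{M_p_generic} verbatim. First, by the Birkhoff Ergodic Theorem together with the Anosov Closing Lemma on $\Omega_{\text{NF}}$, every ergodic $\mu \in \mathcal{M}^1_{erg}(\Omega_{\text{NF}})$ is a weak-$*$ limit of elements of $\mathcal{M}^1_p(\Omega_{\text{NF}})$; this gives density in $\mathcal{M}^1_{erg}(\Omega_{\text{NF}})$. Second, a convex combination $\sum c_i \mu_{v_i}$ of finitely many periodic Dirac measures in $\mathcal{M}^1_p(\Omega_{\text{NF}})$ is approximated by a single $\mu_u$ with $u \in \mathrm{Per}(\Omega_{\text{NF}})$ via the same gluing construction: connect consecutive $v_i$'s by transitivity, shadow each $\gamma_{v_i}$ for $N p_i l_i$ units of time, close up the resulting pseudo-orbit by the Anosov Closing Lemma, and observe that periodicity forces the closed orbit back into $\Omega_{\text{NF}}$ (since $\Omega_{\text{NF}}$ is open in $\Omega$ and the orbit shadows points of $\Omega_{\text{NF}}$). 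Finally, because $\mathcal{M}^1(\Omega_{\text{NF}})$ is Polish, the ergodic decomposition gives $\mathcal{M}^1(\Omega_{\text{NF}}) = \overline{\mathrm{CH}}(\mathcal{M}^1_{erg}(\Omega_{\text{NF}}))$, and (1) follows.

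For claims (2) and (3) I would invoke the results of \cite{CS1, CS2} directly. Lemma 4.1 of \cite{CS1} implies that in a Polish ambient, the set of invariant probability measures with full support on $\Omega_{\text{NF}}$ is a dense $G_\delta$ once $\mathcal{M}^1_p(\Omega_{\text{NF}})$ is dense (applied to the open set $\Omega_{\text{NF}}$ itself); combined with Theorem 4.2 of \cite{CS1}, which asserts that in the presence of local product structure, Anosov Closing Lemma and transitivity, ergodicity is a $G_\delta$ property, one obtains the residual set of claim (2). For claim (3), the zero-entropy argument of \cite{CS2} again uses only shadowing of pseudo-orbits to produce, arbitrarily close to any invariant measure, an invariant measure supported on a single periodic orbit (hence zero entropy), and then applies upper semicontinuity of entropy on a compact set of measures to get a $G_\delta$ of zero-entropy measures; this transcribes without change.

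The main obstacle I anticipate is bookkeeping rather than new geometry: one must make sure that every intermediate orbit produced in the gluing and closing constructions actually lies in $\Omega_{\text{NF}}$ and not only in $\Omega$. This is where the hypothesis that $\Omega_{\text{NF}}$ is \emph{open} in $\Omega$ is essential: each periodic shadowing vector $u$ returned by the Anosov Closing Lemma is $\epsilon$-close to a point of $\Omega_{\text{NF}}$, so for $\epsilon$ small it lies in $\Omega_{\text{NF}}$; and each connecting vector produced by transitivity can, after applying Theorem~\ref{thm_4_2} once more, be replaced by a periodic shadowing vector in $\Omega_{\text{NF}}$. Once this is checked, the Coud\`ene--Schapira machinery runs without modification and all three statements follow.
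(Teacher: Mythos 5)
Your proposal follows essentially the same route as the paper, which itself establishes the three dynamical ingredients (local product structure, Anosov Closing Lemma, and transitivity on $\Omega_{\text{NF}}$, via the remarks using openness of $\Omega_{\text{NF}}$ in $\Omega$) and then defers to the Coud\`ene--Schapira arguments of \cite{CS1, CS2}; your additional observations (Polishness of $\Omega_{\text{NF}}$ as an open subset of the closed set $\Omega$, and the bookkeeping that shadowing orbits remain in $\Omega_{\text{NF}}$ by openness) are exactly the points the paper handles analogously for $\Omega_1$ in Theorem~\ref{M_p_generic}. No substantive difference in approach.
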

The proof is similar to the proof given in~\cite{CS2}, we omit it here.

\section*{Acknowledgments}
We want to acknowledge Professor Xiaochun Rong at Rutgers University and Professor Shicheng Xu at Capital Normal University (CNU) for helping us clear some ambiguity in geometry, Professor Dong Chen at Ohio State University for sharing us with his most recent works.

Fei Liu is partially supported by Natural Science Foundation of Shandong Province under Grant No.~ZR2020MA017 and appreciate Professor Qiaoling Wei at Capital Normal University for the hospitality provided when visiting CNU.

Xiaokai Liu appreciate Professor Jana Rodriguez Hertz for her help and constructive communications.

Fang Wang is partially supported by Natural Science Foundation of China (NSFC) under Grant No.~11871045 and key research project of the Academy for Multidisciplinary Studies, Capital Normal University.

\bibliographystyle{amsplain}
\bibliography{manuscript}
\end{document}